\documentclass[letterpaper,12pt]{amsart}
\textwidth=16.00cm 
\textheight=22.00cm 
\topmargin=0.00cm
\oddsidemargin=0.00cm 
\evensidemargin=0.00cm 
\headheight=0cm 
\headsep=0.5cm

\textheight=630pt

\usepackage{latexsym,array,delarray,amsmath,amsthm,amssymb,epsfig,setspace,tikz}
\usepackage{cite}
\usepackage{float}
\usepackage{nicematrix}

\usepackage{color}

\theoremstyle{plain}
\newtheorem{thm}{Theorem}[section]
\newtheorem{lemma}[thm]{Lemma}
\newtheorem{prop}[thm]{Proposition}
\newtheorem{cor}[thm]{Corollary}
\newtheorem{conj}[thm]{Conjecture}
\newtheorem*{thm*}{Theorem}
\newtheorem*{lemma*}{Lemma}
\newtheorem*{prop*}{Proposition}
\newtheorem*{cor*}{Corollary}
\newtheorem*{conj*}{Conjecture}

\theoremstyle{definition}
\newtheorem{defn}[thm]{Definition}
\newtheorem{ex}[thm]{Example}

\newtheorem{alg}[thm]{Algorithm}

\theoremstyle{remark}
\newtheorem*{rmk}{Remark}


\newcommand{\qq}{\mathbb{Q}}
\newcommand{\rr}{\mathbb{R}}

\newcommand{\kk}{\mathbb{K}}



\newcommand{\calc}{\mathcal{C}}

\newcommand{\cm}{\mathcal{M}}

\newcommand{\calp}{\mathcal{P}}


\newcommand{\ind}{\mbox{$\perp \kern-5.5pt \perp$}}

\newcommand{\dist}{\rm{dist}}
\newcommand{\inw}{\rm{in}_\omega}
\newcommand{\im}{\rm{im}}

\newcommand{\wcm}{\widetilde{\mathcal{M}}}

\title{Identifiable paths and cycles in linear compartmental models}
\author{Cashous Bortner}
\author{Nicolette Meshkat}

\begin{document}
\maketitle


\begin{abstract}
We introduce a class of linear compartmental models called \textit{identifiable path/cycle models} which have the property that all of the monomial functions of parameters associated to the directed cycles and paths from input compartments to output compartments are identifiable and give sufficient conditions to obtain an identifiable path/cycle model.  Removing leaks, we then show how one can obtain a locally identifiable model from an identifiable path/cycle model. These identifiable path/cycle models yield the \textit{only} identifiable models with certain conditions on their graph structure and thus we provide necessary and sufficient conditions for identifiable models with certain graph properties.  A sufficient condition based on the graph structure of the model is also provided so that one can test if a model is an identifiable path/cycle model by examining the graph itself.  We also provide some necessary conditions for identifiability based on graph structure. Our proofs use algebraic and combinatorial techniques.  

  \vskip 0.1cm
  \noindent \textbf{Keywords:} structural identifiability, linear compartmental model, identifiable functions of parameters, identifiable combinations

\end{abstract}

\maketitle

\section{Introduction}

The parameter identifiability problem is the question of whether or not the unknown parameters of a mathematical model can be determined from known data.  This paper is concerned with \textit{structural identifiability} 
analysis, that is, whether the model parameters can be identified from perfect input-output data (noise-free and of any duration required).
Structural identifiability is a necessary condition for \textit{practical identifiability}
which is identifiability analysis in the presence of noisy
and imperfect data.  Thus, structural identifiability is an important step in the parameter estimation problem, since failure to recover parameters in the ideal case implies failure in the imperfect case as well. If all of the parameters of a model can be determined, we say the model is (at least) locally identifiable, but if some subset of the parameters can take on an infinite number of values yet yield the same input-output data, the model is said to be unidentifiable.  

In this work, we examine a special class of models called \textit{linear compartmental models}.  Linear compartmental models are an important class of biological models used in the areas of cell biology, pharmacology, toxicology, ecology, physiology, and many other areas \cite{distefano-book}.  In a typical biological application, the mass or concentration of a substance (e.g. drug concentration in an organ) is represented by a compartment, and the transfer of material from one compartment to another is given by a constant rate parameter, called an \textit{exchange rate}.  The transfer of material from a compartment leaving the system is given by a constant rate parameter called the \textit{leak rate}, and any compartment containing such a leak is called a \textit{leak compartment}.  An input represents the input of material to a particular compartment of the system (e.g. IV drug input) and an output represents a measurement from a compartment (e.g. drug concentration in an organ), where such compartments are called \textit{input compartments} and \textit{output compartments}, respectively. The resulting ODE system of equations (see Equation (\ref{eq:main})) is linear.  This linearity feature has a nice mathematical consequence in that the model can be represented by a directed graph.  Thus, we can analyze identifiability problems in terms of the combinatorial 
structure of that graph.  

Recent work on identifiable reparametrizations \cite{MeshkatSullivant,BaaijensDraisma}, sufficient conditions for identifiability \cite{MeshkatSullivantEisenberg,submodel,Gerberding-Obatake-Shiu}, and identifiable functions of parameters \cite{MeshkatSullivant, meshkat-rosen-sullivant} has examined the combinatorial structure of the graph in a linear compartmental model to answer questions about what to do with an unidentifiable model.  One approach to dealing with an unidentifiable model is to reparametrize the model over identifiable functions of parameters in the model.  In other words, although not all the parameters are identifiable, one can attempt to reparametrize over a set of functions of parameters that can be determined from input-output data.  In \cite{MeshkatSullivant}, necessary and sufficient conditions were given to obtain an identifiable scaling reparametrization in the case of a linear compartmental model with a single input and output in the same compartment, leaks from every compartment, and having a strongly connected graph.  In \cite{MeshkatSullivantEisenberg}, these models were called \textit{identifiable cycle models} because the monomial functions associated to the directed cycles are identifiable and it was shown that removing all but one leak from such models results in identifiability.  Additionally, it was shown that removing a subset of leaks, but adding inputs or outputs to the remaining leak compartments, results in identifiability.  

In this paper, we expand upon the results in \cite{MeshkatSullivant, MeshkatSullivantEisenberg} in the following ways.  First, we consider the case of inputs and outputs not necessarily in the same compartment and define the analogous \textit{identifiable path/cycle model} (Definition \ref{defn:pathcycle}), which is a model where all the monomial functions associated to the directed cycles and paths from input to output are identifiable.  Just as in \cite{MeshkatSullivant}, this occurs when the model has a coefficient map whose image has maximal dimension (Theorem \ref{thm:idpathcycle}).  
We then take these identifiable path/cycle models and remove leaks from all compartments except input/output compartments to achieve identifiable models (Theorem \ref{thm:removeleaks}).  A similar result was demonstrated in \cite{MeshkatSullivantEisenberg}, but in that version, the intersection of input and output compartments was nonempty, whereas in the present work the input and output compartments need not coincide.  We then show that these identifiable path/cycle models yield the \textit{only} identifiable models with certain conditions on their graph structure (Theorem \ref{thm:addleaks}).  We thus provide necessary and sufficient conditions for identifiable models with certain graph properties (Corollary \ref{cor:necandsuff}).  We also give a sufficient condition for a model to be an identifiable path/cycle model which can be tested simply by examining the graph itself (Theorem \ref{thm:almost_isc}).  In addition, we weaken the conditions on the graph structure to obtain some necessary and sufficient conditions for identifiability (Corollary \ref{cor:necandsuffoutputconnectable}).  We also give some necessary conditions for identifiability in terms of the structure of the graph (Theorems \ref{thm:leakcondition}, \ref{thm:edgecondition}, \ref{thm:pathcondition}).  Finally, we give a construction of identifiable models using results from \cite{BaaijensDraisma} (Algorithm \ref{alg:construct}).

Our results apply to a large class of linear compartmental models which arise in many real-world applications. Path models of the form in Proposition \ref{prop:path} arise in physiological models involving metabolism, biliary, or excretory pathways \cite{distefano-book} and models of neuronal dendritic trees \cite{bressloff1993compartmental}.  Path models also arise when modeling the delayed response to input and are called \textit{time-delay models} \cite{distefano-book}.  One such example is Example 4.13 from \cite{distefano-book} on oral dosing losses and delays in the gastrointestinal tract.  Some other path models are considered in Section \ref{section:Examples}.  More generally, we consider models that are \textit{strongly input-output connected}.  Mammillary and catenary models \cite{distefano-book} fall into this category, as well as a variation of mammillary and catenary models where input and output are in distinct neighboring compartments but the edge from output to input is missing (see Figure \ref{fig:ex}). More generally, our results apply to models that can be thought of as path models combined with catenary models and are considered in Section \ref{section:Examples}.  Such a model could, for example, represent a time-delay model coupled with a catenary model.

The organization of the paper is as follows. Section \ref{section:background} gives the necessary background.  Section \ref{section:idpathcycle} gives the definition of an identifiable path/cycle model and how to obtain one.  Section \ref{section:classification} gives a classification of all identifiable models with certain graph properties.  Section \ref{section:othered} examines weaker conditions on the graph structure for necessary and sufficient conditions for identifiability.  Section \ref{section:neccond} gives necessary conditions for identifiability in terms of the graph structure of the model.  Section \ref{section:Examples} demonstrates our results on some real-world examples.  Section \ref{section:computations} gives computations on the number of models with maximal dimension with a certain number of inputs and outputs.  Section \ref{section:construction} gives a construction of identifiable models.  Section \ref{section:conclusion} gives a conclusion and a conjecture on identifiable scaling reparametrizations of identifiable path/cycle models.

\begin{table}[h]
\centering
\begin{tabular}{| c | c |}
\hline
{\bf Result }   & {\bf Explanation} \\
\hline
Corollary \ref{cor:necandsuff}   &  Gives necessary and sufficient conditions for a \\
& strongly input-output connected model to be \\
& an identifiable path/cycle model      
      \\ \hline
Theorem \ref{thm:almost_isc}    &      Gives a sufficient condition to be an   \\ 
& identifiable path/cycle model based on graph structure   \\  \hline
Corollary \ref{cor:necandsuffoutputconnectable}      &     Gives necessary and sufficient conditions for an  \\
& output connectable model to be \\
& an identifiable path/cycle model    \\      \hline
\end{tabular}
\caption{Summary of main results.}
\label{tab:summary}
\end{table}

\begin{table}[h]
\centering
\begin{tabular}{| c | c | c |}
\hline
{\bf Prior Result }   & {\bf New Result} & {\bf Explanation} \\
\hline
Theorem 1.2 of \cite{MeshkatSullivant}   &       	
Theorem \ref{thm:idpathcycle}       &       Generalizes conditions for  \\
& & identifiable cycle model to \\
&& identifiable path/cycle model      
      \\ \hline
Theorem 5 from \cite{MeshkatSullivantEisenberg}         &     	
Theorem \ref{thm:removeleaks}                       &       Generalizes removing leaks \\
&&to obtain identifiability    \\      \hline
Theorem 5.13 from \cite{MeshkatSullivant}         &     	
Theorem \ref{thm:almost_isc}                       &       Generalizes inductively strongly  \\
& & connected to almost inductively \\
&& strongly connected    \\      \hline
Proposition 5.4 from \cite{MeshkatSullivant}         &     	
Proposition \ref{prop:path}                       &       Generalizes identifiable cycle \\
& & to identifiable path    \\      \hline
Proposition 5.5 from \cite{MeshkatSullivant}         &     	
Proposition \ref{prop:addvertex}                       &       Generalizes adding a new vertex \\     \hline
Proposition 5.3 from \cite{MeshkatSullivant}         &     	
Theorem \ref{thm:edgecondition}                       &       Generalizes necessary condition of having  \\
& Theorem \ref{thm:pathcondition} &an exchange to having a path   \\      \hline
\end{tabular}
\caption{Summary of which new results in this paper generalize the prior results from \cite{MeshkatSullivant} and \cite{MeshkatSullivantEisenberg}.}
\label{tab:comparison}
\end{table}

\section{Background} \label{section:background}  

Let $G$ be a directed graph with vertex set $V$ and set of 
directed edges $E$.  Each vertex $i \in V$ corresponds to a
compartment in our model and an edge $j \rightarrow i$ denotes 
a direct flow of material from compartment $j$ to
compartment $i$.  Also introduce three subsets of the vertices
$In, Out, Leak \subseteq V$ corresponding to the
set of input compartments, output compartments, and leak compartments
respectively.  To each edge $j \rightarrow i$ we associate
an independent parameter $a_{ij}$, the rate of flow
from compartment $j$ to compartment $i$.  
To each leak node $i \in Leak$, we associate an independent
parameter $a_{0i}$, the rate of flow from compartment $i$ leaving the system.

We associate a matrix $A(G)$, called the \textit{compartmental matrix} to the graph and the set $Leak$
 in the following way:
\[
  A(G)_{ij} = \left\{ 
  \begin{array}{l l l}
    -a_{0i}-\sum_{k: i \rightarrow k \in E}{a_{ki}} & \quad \text{if $i=j$ and } i \in Leak\\
        -\sum_{k: i \rightarrow k \in E}{a_{ki}} & \quad \text{if $i=j$ and } i \notin Leak\\
    a_{ij} & \quad \text{if $j\rightarrow{i}$ is an edge of $G$}\\
    0 & \quad \text{otherwise}\\
  \end{array} \right.
\]
For brevity, we will often
 use $A$ to denote $A(G)$.  Also, define the vector $\mathcal{A} \in \rr^{|E|+|Leak|}$ consisting of nonzero parameters of $A$.

Then we construct a system of linear ODEs with inputs and outputs associated to the quadruple
$(G, In, Out, Leak)$ as follows:
\begin{equation} \label{eq:main}
x'(t)=Ax(t)+u(t)  \quad \quad y_i(t)=x_i(t)  \mbox{ for } i \in Out
\end{equation}
 where $u_{i}(t) \equiv 0$ for $i \notin In$.
 The coordinate functions $x_{i}(t)$ are the state variables, the 
 functions $y_{i}(t)$ are the output variables, and the nonzero functions $u_{i}(t)$ are
 the inputs.  The resulting model is called a   \textit{linear compartmental model}. 

We will indicate output compartments by this symbol: \begin{tikzpicture}[scale=0.7]
 	\draw (4.66,-.49) circle (0.05);	
	 \draw[-] (5, -.15) -- (4.7, -.45);	
\end{tikzpicture} .  
Input compartments are labeled by ``in'', and leaks are indicated by edges which go to no vertex.

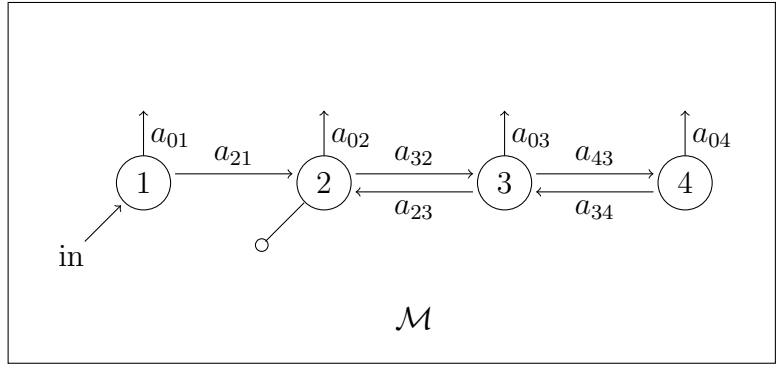
\begin{figure}[H]
\begin{center}
	\begin{tikzpicture}[scale=1.2]
 	\draw (0,0) circle (0.3);
 	\draw (2,0) circle (0.3);
 	\draw (4,0) circle (0.3);
 	\draw (6,0) circle (0.3);
    	\node[] at (0, 0) {1};
    	\node[] at (2, 0) {2};
    	\node[] at (4, 0) {$3$};
    	\node[] at (6, 0) {$4$};
	 \draw[->] (0.35, .1) -- (1.65, .1);
	 \draw[->] (2.35,.1) -- (3.65,.1);
	 \draw[<-] (2.35,-.1) -- (3.65,-.1);
	 \draw[->] (4.35,.1) -- (5.65,.1);
	 \draw[<-] (4.35,-.1) -- (5.65,-.1);
   	 \node[] at (1, 0.3) {$a_{21}$};
	\node[] at (3,0.3) {$a_{32}$};
	\node[] at (3,-.3) {$a_{23}$};
	\node[] at (5,.3) {$a_{43}$};
	\node[] at (5,-.3) {$a_{34}$};
	\draw (1.31,-.69) circle (0.07);	
	 \draw[-] (1.35, -.65 ) -- (1.78, -.22);	
	 \draw[->] (-.65, -.65) -- (-.25, -.25);	
   	 \node[] at (-.8,-.8) {in};
	 \draw[->] (0,.3) -- (0, .8);	
   	 \node[] at (.3, .5) {$a_{01}$};
	 \draw[->] (2,.3) -- (2, .8);	
   	 \node[] at (2.3, .5) {$a_{02}$};
	 \draw[->] (4,.3) -- (4, .8);	
   	 \node[] at (4.3, .5) {$a_{03}$};
	 \draw[->] (6,.3) -- (6, .8);	
   	 \node[] at (6.3, .5) {$a_{04}$};
\draw (-1.5,-2) rectangle (7, 2);
    	\node[] at (3, -1.5) {$\cm$};
    	

\end{tikzpicture}
\end{center}
\caption{Graph for Example \ref{ex:continuing}.}
\label{fig:ex}
\end{figure}

\begin{ex} \label{ex:continuing} The model $\cm=(G,\{1\},\{2\},V)$ with $G$ given 
in Figure \ref{fig:ex} is a linear compartmental model with equations given by:
{\small
\begin{align} \label{eq:A-ex}
\begin{pmatrix}
x_1' \\
x_2' \\
x_3' \\
x_4' 
\end{pmatrix} 
&~=~
\begin{pmatrix}
- a_{01} - a_{21} & 0 & 0 & 0 \\
a_{21} & -a_{02}-a_{32} & a_{23} & 0\\
0 & a_{32} & -a_{03}-a_{23}-a_{43} & a_{34} \\
0 & 0 & a_{43} & -a_{04} -a_{34}
\end{pmatrix}
\begin{pmatrix}
x_1 \\
x_2 \\
x_3 \\
x_4
\end{pmatrix} +
\begin{pmatrix}
u_1 \\
0 \\
0 \\
0
\end{pmatrix}~,
\end{align}
}

with 
output equation $y_2=x_2$. 
\end{ex}

For a model $(G, In, Out, Leak)$ where there is a leak in every
compartment (i.e. $Leak = V$), it can greatly simplify the representation
to use the fact that the diagonal entries of $A(G)$ are the
only places where the parameters $a_{0i}$ appear.
Since these are algebraically independent parameters,
we can introduce a new algebraically  independent 
parameter $a_{ii}$ for the diagonal entries (i.e. we make the substitution $a_{ii}  =  -a_{0i}-\sum_{k: i \rightarrow k \in E}{a_{ki}}$)
to get generic parameter values along the diagonal.
Identifiability questions in such a model are equivalent
to identifiability questions in the model with this reparametrized
matrix. 

We will be considering graphs that have some special connectedness properties.  We define these properties now, as well as the basic algebraic structures (monomial paths and cycles) we will be working over. 

\begin{defn} A directed graph $G$ is \textit{connected} if each pair of vertices in the graph is joined by an undirected path.  A directed graph $G$ is \textit{strongly connected} if there exists a directed path from each vertex to every other vertex.  A directed graph $G$ is \textit{inductively strongly connected} with respect to vertex $1$ if each of the induced subgraphs $G_{\{1, \ldots, i\}}$ is strongly connected for $i = 1, \ldots, n$ for some ordering of the vertices $1,\ldots,i$ which must start at vertex $1$.
\end{defn}

\begin{defn} A \textit{closed path} in a directed graph $G$ is a sequence of 
vertices $i_{0},i_{1}, i_{2}, \ldots, i_{k}$ with $i_{k} = i_{0}$ and
such that $i_{j+1} \to i_{j}$ is an edge for all $j = 0, \ldots, k-1$.
A \textit{cycle} in $G$ is a closed path with no repeated vertices.
To a cycle $C = i_{0},i_{1}, i_{2}, \ldots, i_{k}$, we associate the
monomial $a^{C} = a_{i_{0}i_{1}}a_{i_{1}i_{2}}\cdots a_{i_{k}i_{0}}$,
which we refer to as a \textit{monomial cycle}.  If a monomial cycle $a^{C}$ has
length $k$, we refer to it as a $k$-cycle.  
\end{defn}

Note that we also include the monomial cycles $a_{ii}$ which
are $1$-cycles, or \textit{self-cycles}. 

\begin{defn} A \textit{path} from vertex $i_{k}$ to vertex $i_{0}$ in a directed graph $G$ is a sequence of 
vertices $i_{0},i_{1}, i_{2}, \ldots, i_{k}$
such that $i_{j+1} \to i_{j}$ is an edge for all $j = 0, \ldots, k-1$.
To a path $P = i_{0},i_{1}, i_{2}, \ldots, i_{k}$, we associate the
monomial $a^{P} = a_{i_{0}i_{1}}a_{i_{1}i_{2}}\cdots a_{i_{k-1}i_{k}}$,
which we refer to as a \textit{monomial path}.  If a monomial path $a^{P}$ has
length $k$, we refer to it as a $k$-path.  
\end{defn}

\begin{rmk} We will sometimes drop the word ``monomial'' from ``monomial path'' or ``monomial cycle'' and simply refer to these as paths and cycles, as is the case in the next definition.
\end{rmk}

We now define the path/cycle map for a linear compartmental model $\cm=(G,In,Out,V)$:

\begin{defn} Let $\calp=\calp(G)$ be the set of all directed cycles and paths from input to output vertices in the graph $G$.  Define the \textit{path/cycle} map by:

\begin{equation} \label{eq:pi}
\pi: \rr^{|E|+|V|} \to \rr^{|\calp|} , A \mapsto (a^C)_{C \in \calp}
\end{equation}
\end{defn}

Now we give some definitions from \cite{submodel} regarding an important subgraph to this work:

\begin{defn} For a linear compartmental model $\cm=(G,In,Out,Leak)$, let $i \in Out$.  The \textit{output-reachable subgraph to $i$} (or \textit{to $y_i$}) is the induced subgraph of $G$ containing all vertices $j$ for which there is a directed path in $G$ from $j$ to $i$. A linear compartmental model is \textit{output connectable} if every compartment has a directed path leading from it to an output compartment. 
\end{defn}

We add the following definition:

\begin{defn} A linear compartmental model $\cm=(G,In,Out,Leak)$ is \textit{output connectable to every output} if every compartment has a directed path leading from it to every output compartment.  
\end{defn}

We will be using the so-called \textit{differential algebra approach} to structural identifiability \cite{Ljung,Ollivier}.  Other methods to test identifiability include the Taylor Series approach \cite{Pohjanpalo1978}, the generating series approach \cite{Walter1982}, the similarity transformation approach \cite{Vajda1989}, and the observability-identifiability condition \cite{Villaverde}.  In the differential algebra approach, we view the model equations as differential polynomials in a differential polynomial ring $R(p)[u,y,x]$, i.e., the ring of polynomials in state variable vector $x$, output vector $y$, input vector $u$, and their derivatives, with coefficients in $R(p)$ for parameter vector $p$. Since the unmeasured state variables $x_i$ cannot be determined, the goal in this approach is to use differential elimination to eliminate all unknown state variables and their derivatives.  The resulting equations are only in terms of input variables, output
variables, their derivatives, and parameters, so these equations have the following form:
\begin{align} \label{eq:general-i-o}
\sum_i{c_i(p)\Psi_i(u,y)} =0~.
\end{align}
 
An equation of the form~\eqref{eq:general-i-o} is called an \textit{input-output equation} for
$\mathcal M$. 

For nonlinear models, one standard ``reduced'' generating set for these input-output equations is formed by those equations in a \textit{characteristic set} (defined precisely in~\cite{glad})
that do not involve the $x_i$'s or their derivatives.
In a characteristic set, which can be computed using the software {\tt DAISY}~\cite{daisy},
each $\Psi_i(u,y)$ in each input-output equation~\eqref{eq:general-i-o} 
is a differential monomial, i.e., a monomial purely
in terms of input variables, output variables, and their derivatives. The terms $c_i(p)$ are called the coefficients of the input-output equations.  These coefficients can be fixed uniquely by normalizing the input-output equations to make them monic \cite{daisy}.

However, for linear models, it has been shown that these input-output equations can be found much more easily using the Transfer Function approach \cite{Bellman} or even a trick with Cramer's Rule \cite{MeshkatSullivant}.  We will be taking the latter approach to get an explicit formula for the input-output equations.  

We now state Theorem 3.8 from \cite{submodel} with input $i$ and output $j$, which gives the input-output equation in $y_j$ in terms of the output-reachable subgraph to $y_j$.

\begin{thm} \label{thm:ioscc} 
Let $\mathcal{M}=(G, In, Out, Leak)$
be a linear compartmental model with at least one input.
Let $j \in Out$, 
and assume that there exists a directed path from some input compartment 
to compartment-$j$. 
Let $ H$ denote the output-reachable subgraph to $y_j$, let $A_H$ denote the compartmental matrix for the restriction $\mathcal{M}_H$, and let $\partial I$ be the the product of the differential operator $d/dt$ and the $|V_G| \times |V_G|$ identity matrix.  
Then the following is an input-output equation for $\mathcal M$ involving $y_j$:
 \begin{align}  \label{eq:i-o-for-general-model}
 	\det (\partial I -{A}_H) y_j ~=~   \sum_{i \in In \cap V_H} (-1)^{i+j} \det \left( \partial I-{A}_H \right)_{ij} u_i ~,
	 \end{align}
where $ \left( \partial I-{A}_H \right)_{ij}$ denotes the matrix obtained from
 $\left( \partial I-{A}_H \right)$ by removing the row corresponding to compartment-$i$ and the column corresponding to compartment-$j$.
Thus, this input-output equation~\eqref{eq:i-o-for-general-model}
involves only the output-reachable subgraph to $y_j$.
\end{thm}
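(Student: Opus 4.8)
The plan is to derive the asserted input-output equation by treating the differentiation operator $\partial = d/dt$ as a formal indeterminate and applying Cramer's rule to the state equation, following the ``trick with Cramer's Rule'' of \cite{MeshkatSullivant}. Rewriting $x' = Ax + u$ as $(\partial I - A)\, x = u$ over the commutative polynomial ring $R(p)[\partial]$, the goal is to solve for the single coordinate $x_j = y_j$ while eliminating all remaining state variables.

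The essential preliminary step is structural. Because $H$ is the output-reachable subgraph to $y_j$, every vertex of $V_H$ has a directed path to $j$; hence no edge of $G$ can run from a vertex of $V \setminus V_H$ into $V_H$, since such an edge would give its tail a path to $j$ and force it into $V_H$. Ordering the vertices so that those of $V_H$ come first, this makes $\partial I - A$ block lower-triangular, with a vanishing block in the rows indexed by $V_H$ and the columns indexed by $V \setminus V_H$. I would then check that the submatrix $A_{HH}$ of $A$ on $V_H \times V_H$ is exactly the compartmental matrix $A_H$ of the restriction $\mathcal{M}_H$: the off-diagonal entries agree because they record the edges lying inside $H$, and the diagonal entries agree once each edge leaving $H$ is reinterpreted as a leak of $\mathcal{M}_H$, which is precisely the convention defining the restriction. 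Consequently the $V_H$-block decouples into the closed subsystem $(\partial I_H - A_H)\, x_H = u_H$ in the reachable state variables alone, and this identity holds along every trajectory of the full model $\mathcal{M}$.

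With the decoupled subsystem in hand, I would apply Cramer's rule to solve for $x_j$, noting that $j \in V_H$ since $j$ reaches itself. This gives $\det(\partial I_H - A_H)\, x_j = \det N_j$, where $N_j$ is obtained from $\partial I_H - A_H$ by replacing its $j$-th column with the vector $u_H$. Expanding $\det N_j$ along that column, substituting $y_j = x_j$, and discarding the terms with $u_i \equiv 0$ for $i \notin In$ leaves exactly the sum over $i \in In \cap V_H$ with cofactor signs $(-1)^{i+j}$, which is the claimed equation. The hypothesis that some input compartment has a path to $j$ guarantees $In \cap V_H \neq \emptyset$, so the right-hand side is genuinely present.

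I expect the main obstacle to be the structural step rather than the algebra: one must justify carefully that no edge enters $V_H$ from outside and that the decoupled block $A_{HH}$ really equals the restriction's compartmental matrix $A_H$, in particular that the diagonal leak bookkeeping matches. Once the block-triangular decoupling is in place, the Cramer's-rule computation is routine; the only minor care needed is reconciling the cofactor signs, where the positions within the submatrix $\partial I_H - A_H$ must be matched against the global vertex labels $i$ and $j$.
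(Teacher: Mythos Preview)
The paper does not give its own proof of this theorem: it is quoted verbatim as ``Theorem~3.8 from \cite{submodel}'' and used as background, with no argument supplied. There is therefore nothing in the present paper to compare your proposal against.

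That said, your argument is correct and is exactly the Cramer's-rule approach the paper alludes to (citing \cite{MeshkatSullivant}). The structural step---no edge enters $V_H$ from $V\setminus V_H$, hence the $V_H$-rows of $(\partial I-A)x=u$ decouple---is the right observation, and your remark that edges leaving $V_H$ must be absorbed as leaks so that the principal submatrix $A_{HH}$ coincides with the restriction's compartmental matrix $A_H$ is precisely the point that needs checking. The only genuine looseness you flag yourself: the cofactor sign $(-1)^{i+j}$ in the theorem refers to the positions of $i$ and $j$ within the indexing of $A_H$, not to their global labels, and one should say so explicitly; but this affects only an overall sign on each summand and is immaterial to the equation's status as an input-output relation.
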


\begin{ex} [Continuation of Example \ref{ex:continuing}] The model $\cm=(G,\{1\},\{2\},V)$ with $G$ given by the graph \{ $1 \rightarrow 2, 2 \rightarrow 3, 3 \rightarrow 2, 3 \rightarrow 4, 4 \rightarrow 3$ \} has leaks from every compartment, thus writing the diagonal elements as $a_{ii}$, we have the following input-output equation: 

{\footnotesize
\begin{align*} 
	& y_2^{(4)} +(-a_{11}-a_{22}-a_{33}-a_{44}) y_2^{(3)}
	\\ \notag
	& \quad 
	+(a_{11} a_{22} - a_{23} a_{32} + a_{11} a_{33} + a_{22} a_{33} - a_{34} a_{43} + 
 a_{11} a_{44}  + a_{22} a_{44} + a_{33} a_{44}) y_2''
	\\ \notag
	 &\quad 
	+(a_{11} a_{23} a_{32} - a_{11} a_{22} a_{33} + a_{11} a_{34} a_{43} + a_{22} a_{34} a_{43} - a_{11} a_{22} a_{44} + a_{23} a_{32} a_{44} - a_{11} a_{33} a_{44}- a_{22} a_{33} a_{44} )y_2'
	\\ \notag
	 & \quad 
	+  (-a_{11} a_{22} a_{34} a_{43} - a_{11} a_{23} a_{32} a_{44} + a_{11} a_{22} a_{33} a_{44})y_2 
		\\ \notag
	  &
	=~ (a_{21})u_1''
	\\ \notag
	 & \quad 
	+(-a_{21}a_{33}-a_{21}a_{44})u_1'
		\\ \notag
	 & \quad 
	+(a_{21}a_{33}a_{44}-a_{21}a_{34}a_{43})u_1~.
\end{align*}
}

\end{ex}

\subsection{Identifiability}

In the differential algebra approach to structural identifiability, one tests identifiability by using the input-output equations determined from the \textit{characteristic set} \cite{daisy}.  However, for the case of a linear model, we have shown that the input-output equations can be formed using Equation (\ref{eq:i-o-for-general-model}) and thus we would like to define identifiability using these equations.   An important question arises here: can we use the input-output equations in Equation (\ref{eq:i-o-for-general-model}) to test identifiability?  

In the case of a single output, there is just a single input-output equation of the form in Equation (\ref{eq:i-o-for-general-model}).  However, in the case of multiple outputs, there is an input-output equation for each output in $Out$.  The equations formed in Theorem \ref{thm:ioscc} are not necessarily minimal, i.e. of lowest degree, but this condition of minimality is required in order for identifiability to be well-defined \cite{saccomani2003parameter}.  In Theorem 3 of \cite{OvchinnikovPogudinThompson}, it is shown that the input-output equations in Equation (\ref{eq:i-o-for-general-model}) can be used to analyze identifiability in the case of a linear compartmental model with at least one input and whose graph is strongly connected.  Thus, in the rest of this paper, we will be assuming $G$ is strongly connected and the model has at least one input when we are analyzing models with more than 1 output so that our definition of identifiability is well-defined.


The next step of the differential algebra approach assumes that the coefficients $c_i(p)$ of the input-output equations
can be recovered uniquely from input-output data, and thus are 
presumed to be known quantities~\cite{SoderstromStoica}.  While there are examples of linear models where this assumption can lead to incorrect conclusions about identifiability (see Example 2.14 of \cite{global-id}), this assumption that the coefficients can be recovered uniquely from input-output data holds for all linear compartmental models with corresponding graphs such that one can reach a leak or an input from every vertex (see Theorem 2 of \cite{OvchinnikovPogudinThompson}). We will only be considering either strongly connected or strongly input-output connected models where the output compartment contains a leak, so the coefficients of the input-output equations can be uniquely recovered.

We give some definitions of identifiability from \cite{MeshkatSullivantEisenberg}.

\begin{defn}\label{defn:identify}
Let $(G, In, Out, Leak)$ be a linear compartment model and
let $c$ denote the vector of all nonzero and nonmonic coefficient functions of
all the linear input-output equations derived in Theorem
\ref{thm:ioscc} for each $i \in Out$.  The function $c$ defines a map
$c:  \rr^{|E| + |Leak|}  \rightarrow \rr^{k}$,
where $k$ is the total number of coefficients which we call the \textit{coefficient map}.  The linear compartment model  $(G, In, Out, Leak)$ is: 
\begin{itemize}
	\item{\textit{globally identifiable} if $c$ is a one-to-one function, and is \textit{generically globally identifiable} if global identifiability holds everywhere in $\rr^{|E| + |Leak|}$, except possibly on a set of measure zero.}
	\item{\textit{locally identifiable} if around any neighborhood of a point in $\rr^{|E| + |Leak|}$, $c$ is a one-to-one function, and is \textit{generically locally identifiable} if local identifiability holds everywhere in $\rr^{|E| + |Leak|}$, except possibly on a set of measure zero.}
\item{\textit{unidentifiable} if $c$ is infinite-to-one.}
\end{itemize}
\end{defn}

\begin{rmk} Throughout the rest of this paper, we will be concerned with generic local identifiability.  Thus, we will drop the word ``generic'' and just state a model is locally identifiable.  Additionally, if a model is ``identifiable'', this means it is (at least) locally identifiable.
\end{rmk}

Since we will be concerned with generic local identifiability in this work, we will be using the following proposition from \cite{MeshkatSullivantEisenberg} which follows from the fact that the rank of the Jacobian of $c$ evaluated at a generic point
is equal to the dimension of the image of $c$ \cite[Prop. 14.4]{Harris}.

\begin{prop} [Proposition 2 from \cite{MeshkatSullivantEisenberg}] \label{prop:mse}
The model $(G, In, Out, Leak)$ is generically locally
identifiable if and only if the rank of the Jacobian of $c$
is equal to $|E| + |Leak|$ when evaluated at a random point. 
\end{prop}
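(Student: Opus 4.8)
The plan is to treat $c$ as a polynomial map and convert the injectivity question into a statement about the generic rank of its Jacobian via standard differential topology. Each coefficient $c_i(p)$ of an input-output equation in Theorem~\ref{thm:ioscc} is, up to sign, a minor of $\partial I - A_H$ expanded as a polynomial in the entries of $A$, hence a polynomial in the parameters; so $c \colon \rr^{n} \to \rr^{k}$ is a polynomial (in particular $C^\infty$) map, where I write $n := |E| + |Leak|$. Because the entries of the Jacobian $J$ of $c$ are themselves polynomials in the parameters, the rank of $J$ is lower semicontinuous: it attains some maximal value $r \le n$ on a nonempty set (the locus where some $r \times r$ minor is nonzero), whose complement lies in a proper algebraic subset and therefore has measure zero. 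Thus ``the rank of $J$ at a random point'' is exactly this generic rank $r$, and the proposition reduces to the equivalence that $c$ is generically locally injective if and only if $r = n$.

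For the direction ($\Leftarrow$), suppose $r = n$. On the dense open locus where $J$ (a $k \times n$ matrix) has full column rank $n$, the derivative $dc_p \colon \rr^n \to \rr^k$ is injective, so $c$ is an immersion at $p$. By the local immersion theorem (a consequence of the Inverse Function Theorem), $c$ is injective on a neighborhood of each such $p$. Hence $c$ is locally one-to-one off a measure-zero set, i.e. the model is generically locally identifiable.

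For ($\Rightarrow$), I argue the contrapositive: suppose $r < n$. Then $\mathrm{rank}\,J \le r$ everywhere and equals $r$ on a dense open set $U$. On $U$ the rank is constant, so the Constant Rank Theorem writes $c$, in suitable local coordinates around any $p \in U$, as the linear projection $\rr^n \to \rr^r$. The fiber of this projection through $p$ is an $(n-r)$-dimensional coordinate subspace, and $n - r > 0$; translated back through the coordinate charts, every neighborhood of $p$ contains a positive-dimensional set of parameters all mapping to the common value $c(p)$. Thus $c$ is infinite-to-one near each point of the dense set $U$, so the model is unidentifiable and in particular not generically locally identifiable.

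The main obstacle is precisely the ($\Rightarrow$) direction, and the point requiring care is that local injectivity at an individual point does not by itself force the Jacobian to be full rank there: the map $t \mapsto t^3$ is globally injective yet has vanishing derivative at the origin. The equivalence is therefore genuinely a generic statement. The essential ingredient is that the Jacobian rank is constant and maximal on a dense open set, which is what lets the Constant Rank Theorem upgrade a rank deficiency into honestly positive-dimensional fibers, and the immersion theorem upgrade full rank into local injectivity; this is exactly why both sides of the proposition are phrased at a random (generic) point.
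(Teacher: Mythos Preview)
Your argument is correct and is the standard one: treat $c$ as a polynomial map, note that the Jacobian rank is generically constant, and then invoke the immersion theorem for the full-rank direction and the Constant Rank Theorem for the rank-deficient direction. The discussion of the $t \mapsto t^3$ example is a nice touch making clear why the statement must be generic.

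As for comparison with the paper: there is nothing to compare. The paper does not prove this proposition at all; it is quoted verbatim as Proposition~2 from \cite{MeshkatSullivantEisenberg} and used as a black box. So you have supplied a proof where the paper simply cites one. If anything, your write-up is more informative than what appears here, since the paper gives no indication of why the equivalence holds.
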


If a model is unidentifiable, then this means that not all of the parameters can be determined (uniquely or finitely).  However, we may still be interested in finding \textit{identifiable functions of parameters} or \textit{identifiable combinations}, and in using these functions to attempt to reparametrize the model.  

\begin{defn} \label{defn:idfunction}
Let $c$ be a function $c:\rr^{|E| + |Leak|}  \rightarrow \rr^{k}$.  A function
$f : \rr^{|E|+|Leak|} \rightarrow \rr$ is \textit{globally identifiable} from
$c$ if there exists a function $\Phi: \rr^{k} \rightarrow \rr$
such that $\Phi \circ c  = f$.  The function $f$ is
\textit{locally identifiable} if there is a finitely multivalued function
$\Phi: \rr^{k} \rightarrow \rr$
such that $\Phi \circ c  = f$.
\end{defn}

\subsection{Strongly input-output connected}

In order to consider identifiable path/cycle models, we will be considering graphs $G$ that have the special property of being connected and every edge is contained in a cycle or path from input to output.  We call this \textit{strongly input-output connected}:

\begin{defn} We say a graph $G$ is \textit{strongly input-output connected} if it is connected and every edge is contained in a cycle or path from input to output.
\end{defn}

We first show that, in the case of a single output, being strongly input-output connected implies being output connectable, so if we assume the former we get output connectable and can use Theorem \ref{thm:ioscc} with the whole matrix $A$.  Likewise, for the case of multiple outputs, we show that being strongly connected implies being output connectable to every output.

\begin{prop} \label{prop:outputreach} (1) Consider a model $\cm=(G, In, \{j\}, Leak)$. Assume $G$ is strongly input-output connected.  Then $G$ is output connectable.  (2) Now consider a model $\cm=(G, In, Out, Leak)$.  Assume $G$ is strongly connected.  Then $G$ is output connectable to every output.
\end{prop}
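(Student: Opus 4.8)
The plan is to treat the two assertions separately, handling the single-output case via a boundary-edge argument and deducing the multiple-output case directly from strong connectivity.

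For the first assertion, I would let $S \subseteq V$ be the set of vertices $v$ from which there is a directed path in $G$ to the output $j$; by convention (the trivial length-zero path) we have $j \in S$, so $S \neq \emptyset$. The goal is to show $S = V$, which is precisely the statement that $G$ is output connectable. Arguing by contradiction, I would suppose $V \setminus S \neq \emptyset$. Since $G$ is connected, there must be an edge of $G$ joining a vertex of $S$ to a vertex of $V \setminus S$, and I would analyze the two possible orientations of this edge.

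If the edge is $u \to w$ with $u \in V \setminus S$ and $w \in S$, then $u$ reaches $w$ and $w$ reaches $j$, so $u$ reaches $j$, forcing $u \in S$, a contradiction. The substantive case is an edge $u \to w$ with $u \in S$ and $w \in V \setminus S$; here I would invoke the defining property of being strongly input-output connected, namely that the edge $u \to w$ lies on a cycle or on a path from an input to the output $j$. If it lies on a path from an input to $j$, then following that path from $w$ onward reaches $j$, so $w \in S$. If it lies on a cycle, then continuing around the cycle from $w$ returns to $u$, so $w$ reaches $u$, and since $u \in S$ reaches $j$ we again conclude $w \in S$. In either situation $w \in S$, contradicting $w \in V \setminus S$. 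Hence no such boundary edge can exist, and therefore $S = V$.

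The second assertion is immediate from the definition of strong connectivity: if $G$ is strongly connected, then there is a directed path from every vertex to every other vertex, so in particular every compartment has a directed path to each output compartment (a vertex reaching itself trivially when it is itself an output), which is exactly output connectability to every output. The only place requiring genuine care is the single-output argument, where the edge-leaving-$S$ case is the one that actually uses the cycle-or-path hypothesis; everything else is bookkeeping about directed reachability.
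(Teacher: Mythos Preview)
Your proof is correct and rests on the same core idea as the paper's: use connectivity to find a bridging edge between the ``good'' vertices and the rest, then invoke the cycle-or-path property of that edge to force a contradiction, with the second assertion following immediately from strong connectivity. The organization differs slightly---the paper decomposes by whether an edge lies on a path versus a cycle and then argues about how stray cycles must attach, whereas you work directly with the reachability set $S = \{v : v \text{ reaches } j\}$ and analyze a boundary edge; your formulation is the more standard graph-theoretic argument and handles the case analysis more cleanly (in particular, it avoids the paper's implicit assumption that a disconnected cycle attaches directly to an input-to-output path rather than possibly to another cycle).
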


\begin{proof} Let $\cm=(G, In, \{j\}, Leak)$.  Assume $G$ is strongly input-output connected, i.e. it is connected and every edge is contained in a cycle or path from input to output.  Since every edge contained in a path from input to output is connected to the output, we need only consider the edges in cycles.  If a vertex in a cycle coincides with a vertex on a path from input to output, then we are done.  Thus, assume that there exists a cycle whose vertices do not intersect with the vertices on paths from input to output.  Since the graph is connected, the cycle must be attached via a directed edge from either the cycle to a path from input to output or vice versa.  But the attaching edge must also be on a path from input to output. Thus for any edge from the path to the cycle, there must be a corresponding edge from the cycle to the path.  Thus the graph is output connectable.

If $\cm=(G, In, Out, Leak)$ and $G$ is strongly connected, then since there is a path from each vertex to every other vertex, then $G$ is output connectable to every output.
\end{proof}

\begin{rmk} Note that in Proposition \ref{prop:outputreach}, $G$ must be strongly connected as opposed to strongly input-output connected when $\cm=(G, \{i\}, Out, Leak)$, or else not every vertex may connect to \textit{every} output.
\end{rmk}

\begin{rmk} A model that is strongly input-output connected in the case of a single output or strongly connected in the case of multiple outputs is always \textit{structurally observable} \cite{godfrey-chapman}, as it is output connectable to every output by Proposition \ref{prop:outputreach}.  
\end{rmk}

We now show that the property of being strongly input-output connected is \textit{almost} strongly connected, in the sense that the graph becomes strongly connected once an edge is added (if not already there) from the output to every input if $\cm=(G, In, \{j\}, Leak)$ or an edge is added from every output to the input if $\cm=(G, \{i\}, Out, Leak)$.  

\begin{prop} (1) Consider a model $\cm=(G, In, \{j\}, Leak)$.  The model $\cm$ is strongly connected if an edge is added from output $j$ to every input if and only if it is strongly input-output connected. (2) Now consider a model $\cm=(G, \{i\}, Out, Leak)$.  The model $\cm$ is strongly connected if an edge is added from every output to input $i$ if and only if it is strongly input-output connected.  Strongly connected implies strongly input-output connected.
\end{prop}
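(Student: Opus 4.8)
The statement bundles three implications: for $\cm=(G,In,\{j\},Leak)$, that the augmentation $\wg$ obtained by adding an edge $j\to \ell$ for every input $\ell\in In$ is strongly connected if and only if $G$ is strongly input-output connected; the mirror statement for $\cm=(G,\{i\},Out,Leak)$ with edges $\ell\to i$ added for every $\ell\in Out$; and that strong connectivity of $G$ itself already forces strong input-output connectivity. The plan is to prove the first equivalence in full, deduce the second from it by reversing all edges (which swaps the roles of inputs and outputs and preserves both strong connectivity and strong input-output connectivity), and dispatch the third directly. The single combinatorial fact underlying everything is that in a strongly connected digraph on at least two vertices every edge lies on a directed cycle: for an edge $p\to q$, a simple directed path from $q$ to $p$ exists and cannot reuse $p\to q$, so closing it with $p\to q$ yields a cycle through the edge.

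For the direction ``$\wg$ strongly connected $\Rightarrow$ $G$ strongly input-output connected,'' first note that every vertex already reaches $j$ inside $G$: a shortest directed $v\to j$ path in $\wg$ meets $j$ only at its end, hence uses none of the added edges (which all emanate from $j$), so it lies in $G$; this gives output-connectability and shows $G$ is connected. For the edge condition, take any edge $e$ of $G$; by the fact above, $e$ lies on a cycle $C$ of $\wg$. A cycle visits $j$ at most once, and every added edge starts at $j$, so $C$ uses at most one added edge. If it uses none, then $C$ is a cycle of $G$ through $e$; if it uses some $j\to\ell$, then deleting that edge exhibits $e$ on a directed path from the input $\ell$ to the output $j$ inside $G$. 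The same two-case analysis settles the third claim at once: if $G$ is already strongly connected, then every edge lies on a cycle of $G$, so $G$ is connected and every edge is on a cycle, hence strongly input-output connected.

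For the converse ``$G$ strongly input-output connected $\Rightarrow$ $\wg$ strongly connected,'' I will show that in $\wg$ every vertex reaches $j$ and $j$ reaches every vertex, which suffices. Reaching $j$ is exactly output-connectability, supplied by Proposition~\ref{prop:outputreach}. For $j$ reaching everything, the added edges send $j$ to every input, so it is enough that in $G$ every vertex is reachable from some input. This last assertion is the dual of output-connectability, and it is the step I expect to be the main obstacle, since---just as in the proof of Proposition~\ref{prop:outputreach}---a vertex may sit on a cycle that attaches to the input-to-$j$ paths only through further edges, and one must track orientations carefully. I would resolve it by observing that strong input-output connectivity is self-dual under edge reversal: reversing every edge of $G$ carries cycles to cycles and input-to-$j$ paths to $j$-to-input paths, so the reversed graph (with input $j$ and outputs $In$) is again strongly input-output connected; applying the argument used to prove Proposition~\ref{prop:outputreach} to this reversed graph shows every vertex there reaches some compartment of $In$, i.e., every vertex of $G$ is reachable from some input, as needed.

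Finally, the single-input model $\cm=(G,\{i\},Out,Leak)$ is handled by applying the already-proved first equivalence to the reversed graph $G^{\mathrm{rev}}$: there $i$ is the unique output and $Out$ the set of inputs, the prescribed augmentation (edges $\ell\to i$ for $\ell\in Out$) becomes exactly the edges $i\to\ell$ added in the single-output case, and since both strong connectivity and strong input-output connectivity are invariant under reversal, the equivalence transfers verbatim.
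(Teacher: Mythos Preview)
Your proof is correct. The forward direction ($\wg$ strongly connected $\Rightarrow$ $G$ strongly input-output connected) matches the paper's approach exactly: both use the characterization that a digraph is strongly connected iff it is connected and every edge lies on a directed cycle, then observe that a cycle of $\wg$ through an edge of $G$ either avoids the added edges (giving a cycle of $G$) or uses exactly one of them (giving an input-to-output path in $G$).

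For the converse, however, you take a genuinely different route. The paper simply runs the same characterization in reverse: $G$ connected forces $\wg$ connected, and an edge of $G$ on a cycle (respectively, on an input-to-output path) of $G$ lies on that same cycle (respectively, on the cycle obtained by closing the path with the added edge) in $\wg$; the added edges themselves close up via output-connectability. You instead prove strong connectivity of $\wg$ by showing two-way reachability through $j$: everything reaches $j$ by Proposition~\ref{prop:outputreach}, and $j$ reaches everything via a duality argument (reverse edges, swap $In$ and $Out$, reapply the argument of Proposition~\ref{prop:outputreach}). This is sound, and your observation that strong input-output connectivity is self-dual under reversal is correct; note only that Proposition~\ref{prop:outputreach} is stated for a single output, so you are right to invoke its \emph{argument} rather than its statement on the reversed model (which has output set $In$). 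What the paper's symmetric approach buys is brevity---one characterization handles both directions at once---while your approach makes explicit the role of output-connectability and the input/output duality, which is conceptually useful elsewhere in the paper. Your handling of the single-input case by reversal and of the final implication (strongly connected $\Rightarrow$ strongly input-output connected) is clean and matches the paper.
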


\begin{proof} A model $\cm=(G, In, \{j\}, Leak)$ is strongly connected if and only if it is connected and every edge is contained in a cycle.  Thus a model $\cm=(G, In, \{j\}, Leak)$ is strongly connected if a path from output $j$ to every input is added if and only if it is connected and every edge is contained in a cycle or path from input to output, i.e. strongly input-output connected.  Likewise, a model $\cm=(G, \{i\}, Out, Leak)$ is strongly connected if a path from every output to input $i$ is added if and only if it is connected and every edge is contained in a cycle or path from input to output, i.e. strongly input-output connected.  Additionally, if a model is strongly connected, then it is strongly input-output connected, as every edge is contained in a cycle. 
\end{proof}

We can also examine the \textit{minimum} number of edges in order to be either strongly connected or strongly input-output connected:

\begin{prop} If $G$ is strongly connected, the minimum number of edges is $|V|$.  If $G$ is strongly input-output connected for input $i$ and output $j$, the minimum number of edges is $|V|-1$.
\end{prop}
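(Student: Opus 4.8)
The plan is to prove each of the two claims by combining a lower bound coming from a necessary structural constraint with a matching upper bound coming from an explicit extremal example.

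For the strongly connected case, I would obtain the lower bound by a degree count. If $G$ is strongly connected and $|V| \geq 2$, then every vertex $v$ must have at least one outgoing edge, since otherwise no other vertex would be reachable from $v$, contradicting the existence of a directed path from $v$ to every other vertex. Hence the number of edges satisfies $|E| = \sum_{v \in V} d^+(v) \geq |V|$, where $d^+(v)$ denotes the out-degree of $v$. For the matching upper bound, I would exhibit the directed Hamiltonian cycle $1 \to 2 \to \cdots \to |V| \to 1$, which is strongly connected and has exactly $|V|$ edges, so the bound $|V|$ is attained.

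For the strongly input-output connected case, I would derive the lower bound directly from connectivity: by definition such a graph is (weakly) connected, and any connected graph on $|V|$ vertices has at least $|V|-1$ edges, since its underlying undirected graph contains a spanning tree. For the matching upper bound, I would take a directed Hamiltonian path from the input vertex $i$ to the output vertex $j$, namely $i = v_1 \to v_2 \to \cdots \to v_{|V|} = j$. This graph is connected, and every edge lies on the single directed path from input to output, so it is strongly input-output connected; since it has exactly $|V|-1$ edges, the bound is attained. Alternatively, the lower bound here also follows from the preceding proposition: adding the edge $j \to i$ (if not already present) yields a strongly connected graph, which by the first part has at least $|V|$ edges, so $G$ itself has at least $|V|-1$.

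There is no serious obstacle in either part; the only points requiring care are confirming that the two extremal examples genuinely satisfy the respective connectivity definitions (in particular, that in the Hamiltonian-path example every edge is indeed counted as lying on an input-to-output path), and handling the degenerate case $|V|=1$, which I would dispense with by assuming the graph has at least two vertices, consistent with the nontrivial models considered throughout.
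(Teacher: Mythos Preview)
Your proposal is correct and, if anything, more careful than the paper's own proof. The paper argues in two sentences: for the strongly connected case it simply asserts that the smallest strongly connected graph is a cycle (giving $|V|$ edges), and for the strongly input-output connected case it invokes the preceding proposition (adding the edge $j \to i$ yields a strongly connected graph, hence at least $|V|$ edges, so $G$ has at least $|V|-1$).

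Your route differs slightly in emphasis: you explicitly separate the lower bound (out-degree count, respectively spanning-tree/connectivity argument) from the upper bound (Hamiltonian cycle, respectively Hamiltonian path) and verify that the extremal examples satisfy the relevant definitions. The paper leaves the lower bound for strong connectivity implicit and does not write down the Hamiltonian-path example. Your alternative argument for the second lower bound via the preceding proposition is exactly the paper's argument, so you have subsumed their proof. The only practical difference is that your spanning-tree argument is self-contained, whereas the paper's reduction relies on the characterization proved just before; both are elementary and equally valid.
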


\begin{proof} For $G$ to be strongly connected, each vertex must have at least one incoming and one outgoing edge.  Thus the minimum number of edges is $|V|$.  
 If a graph is strongly input-output connected for input $i$ and output $j$, then this means it becomes strongly connected if an edge is added from output $j$ to input $i$ (if not already there).  This means the minimum number of edges is $|V|-1$.
\end{proof}

\subsection{Expected number of coefficients}

We first give a result from \cite{MeshkatSullivantEisenberg}, which we have reworded to agree with the new terminology in this work and have split into two parts: Proposition \ref{prop:factor} shows that the coefficient map factors through, i.e. can be written purely in terms of, cycles, self-cycles and paths, and Lemma \ref{lemma:hot} gives the degree of the highest-order term on the right hand side of the input-output equation.

\begin{prop} [Proposition 5 from \cite{MeshkatSullivantEisenberg}] \label{prop:factor} Let $\cm=(G,In,\{j\},V)$ represent a linear compartmental model that is output connectable. The coefficient map $c$ factors through cycles, self-cycles, and paths from input to output.  
\end{prop}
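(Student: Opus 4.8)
The plan is to read the coefficients straight off the input-output equation of Theorem~\ref{thm:ioscc} and to expand the two relevant determinants combinatorially via the Leibniz formula, grouping each permutation into its disjoint cycles. Since $\cm$ is output connectable with the single output $j$, every vertex has a directed path to $j$, so the output-reachable subgraph $H$ is all of $G$ and $A_H=A$. After the diagonal reparametrization $a_{ii}$ (legitimate because $Leak=V$), Theorem~\ref{thm:ioscc} shows that the entries of the coefficient map $c$ are precisely the coefficients, as polynomials in $\partial$, of $\det(\partial I-A)$ on the left and of $(-1)^{i+j}\det(\partial I-A)_{ij}$, one for each input $i\in In$, on the right. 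So it suffices to prove that each such coefficient is a polynomial in the monomial cycles and self-cycles (left side), and additionally in the monomial paths from $i$ to $j$ (right side); this exhibits $c$ as $\Phi\circ\pi$ for a suitable $\Phi$, where $\pi$ is the path/cycle map.

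For the left side I would expand $\det(\partial I-A)=\sum_{\sigma\in S_n}\operatorname{sgn}(\sigma)\prod_v(\partial I-A)_{v\sigma(v)}$ and decompose each $\sigma$ into disjoint cycles. A fixed point $\sigma(v)=v$ contributes the factor $\partial-a_{vv}$, while a cycle $(v_1\,v_2\,\cdots\,v_\ell)$ of length $\ell\ge 2$ contributes $\prod_s(-A_{v_s v_{s+1}})$, which is nonzero exactly when $v_{s+1}\to v_s$ is an edge for every $s$, and then equals $(-1)^\ell$ times the monomial cycle $a^{C}$ on $\{v_1,\dots,v_\ell\}$. Expanding each fixed-point factor $\partial-a_{vv}$ into $\partial$ and the self-cycle $-a_{vv}$, every monomial of $\det(\partial I-A)$ becomes a power of $\partial$ times a product of pairwise vertex-disjoint monomial cycles and self-cycles; hence the coefficient of each $\partial^m$ is a polynomial in the monomial cycles and self-cycles alone.

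The right side is the main point. I would expand the minor $\det(\partial I-A)_{ij}$, whose rows are indexed by $V\setminus\{i\}$ and columns by $V\setminus\{j\}$, over the bijections $\tau\colon V\setminus\{i\}\to V\setminus\{j\}$. Drawing an arrow $v\to\tau(v)$ for each $v$, the vertex $j$ receives no arrow and $i$ emits none, while every other vertex has exactly one incoming and one outgoing arrow; hence $\tau$ decomposes uniquely into a single directed path from $j$ to $i$ together with a disjoint family of cycles on the remaining vertices (fixed points again yielding $\partial-a_{vv}$ factors). Reading off the weights $(\partial I-A)_{v\tau(v)}=-A_{v\tau(v)}$, which are nonzero exactly along the $G$-edges oriented oppositely to the arrows, the open part contributes, up to sign, the monomial path $a^{P}$ of a simple path $P$ from the input $i$ to the output $j$ in $G$, and the cyclic part contributes monomial cycles and self-cycles on the complementary, disjoint, vertex set. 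Therefore each coefficient of $\partial^m$ in $\det(\partial I-A)_{ij}$ is a polynomial in the monomial cycles, self-cycles, and monomial paths from $i$ to $j$, which together with the left-side analysis exhibits $c=\Phi\circ\pi$.

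The step I expect to be the crux is the bijection-to-path-plus-cycles decomposition of the minor: one must verify that deleting row $i$ and column $j$ forces exactly one open path, pin down its direction (from input $i$ to output $j$ rather than the reverse), and confirm that its weight is genuinely a monomial path $a^{P}$ in the sense of the path/cycle map, with the disjointness of the path and the cycles guaranteeing that the resulting products lie in the image of $\pi$. The sign bookkeeping, namely the factors $(-1)^\ell$ and $(-1)^{i+j}$, is routine and does not affect the factorization, since any fixed sign is absorbed into $\Phi$.
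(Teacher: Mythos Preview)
Your argument is correct, and your direct Leibniz expansion of the minor is a genuinely different route from the paper's. For the left-hand side both you and the paper invoke the standard cycle expansion of the characteristic polynomial. For the right-hand side, however, the paper does not expand $\det(\partial I-A)_{ij}$ directly. Instead it introduces a phantom edge $j\to i$ with parameter $a_{ij}$, forms the augmented matrix $\tilde A$, and observes that $(-1)^{i+j}\det(\partial I-A)_{ij}$ is obtained (up to sign) by differentiating the characteristic polynomial of $\tilde A$ with respect to $a_{ij}$. Since the characteristic polynomial of $\tilde A$ is already a polynomial in monomial cycles, differentiating in $a_{ij}$ simply kills all terms not containing that edge and strips the edge from those that do, converting each cycle through $j\to i$ into a path from $i$ to $j$.

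Your bijection-to-path-plus-cycles decomposition of $\tau\colon V\setminus\{i\}\to V\setminus\{j\}$ is more elementary and self-contained: it never leaves the original matrix $A$ and handles the case where $j\to i$ is already an edge of $G$ without comment. The paper's derivative trick is slicker in that it reduces the minor case to the already-proved characteristic polynomial case, and it feeds directly into the next lemma on the degree of the highest-order term. One small omission in your write-up: when $i=j$ (input and output coincide) the domain and codomain of $\tau$ agree, the open path is empty, and the minor is a principal minor whose expansion involves only cycles and self-cycles; you should say this explicitly rather than leave it implicit in the general decomposition.
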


\begin{proof} Let $\calc(G)$ be the set of all cycles in $G$, corresponding to a matrix $A$.  Recall that the coefficients of the characteristic polynomial of $A$ can be written as

$$c_{i} =  (-1)^i\sum_{C_{1}, \ldots, C_{k} \in \calc(G)} \prod_{j = 1}^{k}  {\rm sign}(C_{j}) a^{C_{j}},$$

where the sum is over all collections of vertex disjoint cycles
involving exactly $i$ edges of $G$, and ${\rm sign}(C) = 1$ if
$C$ is odd length and ${\rm sign}(C) = -1$ if $C$ is even length. This means for every $i$, all cycles of length $i$ appear as monomial terms in $c_{i}$, and for $j > i$, these cycles of length $i$ appear as monomial products with other cycles in $c_{j}$.

By Theorem \ref{thm:ioscc} and the fact that $G$ is output connectable, meaning that the output-reachable subgraph of $G$ is all of $G$, the input-output equation for $y_j$ is given by:
\begin{align} \label{eq:output-connectable-i-o}
\det(\partial{I}-A)y_j=\sum_{i \in In} (-1)^{i +j}\det(\partial{I}-A)_{ij} u_i.
\end{align}

This means the coefficients on the left hand side factor through the cycles in $G$.  

Let us now examine these coefficients of the $u_i$ terms in Equation (\ref{eq:output-connectable-i-o}).  For $i=j$, the term $\det(\partial{I}-A)_{ii}$ gives the coefficients of the characteristic polynomial for the matrix $A_{ii}$ with row $i$ and column $i$ removed, thus these coefficients factor through cycles of the induced subgraph removing vertex $i$. 

Now assume $i\neq{j}$.  The characteristic polynomial of $A$ can be determined by expanding $\det(\partial{I}-A)$ along the $i^{th}$ row.  Let $\tilde{A}$ be the matrix $A$ with the entry $a_{ij}$ nonzero.  Then for $i\neq{j}$, taking the partial derivative of the characteristic polynomial of $\tilde{A}$ with respect to $a_{ij}$ precisely gives the polynomial $\det(\partial{I}-A)_{ij}$, up to a minus sign.    Since the coefficients of the characteristic polynomial of $\tilde{A}$ factor through the cycles, then taking the derivative of these coefficients with respect to $a_{ij}$ has the effect of removing all monomial terms not involving $a_{ij}$ and setting $a_{ij}$ to one in the monomial terms that do involve $a_{ij}$.  This effectively transforms all cycles involving $a_{ij}$ to paths from the $i^{th}$ vertex to the $j^{th}$ vertex.  Thus, each of the monomial terms are products of paths from the $i^{th}$ vertex to the $j^{th}$ vertex, cycles, and self-cycles.  In other words, coefficients are of the form:

$$c_{m} =  (-1)^m\sum_{P_{1}, \ldots, P_{n} \in \calp(G)} \prod_{l = 1}^{n}  {\rm sign}(P_{l}) a^{P_{l}},$$
where the sum is over all collections of vertex disjoint cycles and paths from $i$ to $j$
involving exactly $m$ edges of $G$, and ${\rm sign}(P) = 1$ if
$P$ is odd length and ${\rm sign}(P) = -1$ if $P$ is even length. 

Thus the coefficients can be factored over cycles, self-cycles, and paths from input to output. In other words, there is a polynomial map 
\begin{equation}\label{eq:psi}
\psi: \rr^{|\calp|} \to \rr^k
\end{equation} where $k$ is the number of coefficients, such that $c=\psi \circ \pi$ where $\pi$ is the path/cycle map from Equation \ref{eq:pi}. 
\end{proof}

We will be writing the number of coefficients in terms of the minimal distance between an input and output compartment.  We define this now:

\begin{defn} \label{defn:dist} Let $i$ be an input compartment and let $j$ be an output compartment, $i\neq{j}$.  Let $\mathcal{P}(i,j)$ be the set of all paths from vertex $i$ to vertex $j$.  Let $l(P)$ denote the length of a path $P \in \mathcal{P}$.  Then we can define the minimum length of all paths from vertex $i$ to vertex $j$ as $\dist(i,j) = \min_{P \in \mathcal{P}(i,j)}$ $ l(P)$.
\end{defn}

\begin{lemma} [Proposition 5 from \cite{MeshkatSullivantEisenberg}]  \label{lemma:hot} Let $\cm=(G,In,\{j\},V)$ represent a linear compartmental model with $\cm$ output connectable. The highest-order term in $u_i$ where $i \in In$ on the right hand side of the input-output equation, Equation (\ref{eq:output-connectable-i-o}), is of degree $|V|-1-\dist(i,j)$. 
\end{lemma}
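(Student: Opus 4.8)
The plan is to read the degree directly off the coefficient of $u_i$ in Equation \ref{eq:output-connectable-i-o}, namely $(-1)^{i+j}\det(\partial I - A)_{ij}$, viewed as a polynomial in the differential operator $\partial$. The phrase ``highest order term in $u_i$'' means the top power of $\partial$ occurring in this coefficient, so the entire task is to show that this polynomial in $\partial$ has degree exactly $|V|-1-\dist(i,j)$. Since this lemma is the second half of the cited Proposition 5, I would build directly on the combinatorial description already established in the proof of Proposition \ref{prop:factor}.

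First I would recall from that proof that $\det(\partial I - A)_{ij}$, obtained up to sign as $\partial/\partial a_{ij}$ of the characteristic polynomial of the augmented matrix, expands as a sum whose monomials are products $a^P \prod_n a^{C_n}$, where $P$ is a path from $i$ to $j$ and the $C_n$ are cycles and self-cycles, all supported on pairwise vertex-disjoint vertex sets. The key bookkeeping step is to pair each such configuration with a power of $\partial$: in the permutation expansion of the $(|V|-1)\times(|V|-1)$ minor, every vertex not covered by $P$ or by the $C_n$ is a fixed point of the underlying permutation and contributes exactly one factor of $\partial$ from a diagonal entry $\partial - A_{kk}$. Hence if a configuration uses $m$ edges in total it covers $m+1$ vertices (a path with $p$ edges covers $p+1$ vertices and disjoint cycles with $q$ edges cover $q$ vertices, so with $m=p+q$ the number of covered vertices is $m+1$), leaving $|V|-1-m$ fixed points, and therefore contributes to the coefficient of $\partial^{|V|-1-m}$.

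Next I would optimize over $m$. Every monomial appearing in the minor necessarily contains a path from $i$ to $j$ — this is precisely why the coefficient factors through such paths in Proposition \ref{prop:factor} — so every configuration uses at least $\dist(i,j)$ edges. Consequently all coefficients of $\partial^{d}$ with $d > |V|-1-\dist(i,j)$ vanish, as they would require a path from $i$ to $j$ with fewer than $\dist(i,j)$ edges. Conversely, taking a shortest path from $i$ to $j$ together with no cycles realizes $m=\dist(i,j)$, i.e. produces the power $\partial^{|V|-1-\dist(i,j)}$.

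It remains to verify that this top coefficient does not vanish identically, which is the only place a genuine argument is needed. The coefficient of $\partial^{|V|-1-\dist(i,j)}$ is a signed sum $\sum_{P} \pm\, a^P$ over all shortest paths $P$ from $i$ to $j$; since distinct paths are recorded by distinct edge sets, the monomials $a^P$ are pairwise distinct and cannot cancel, so the coefficient is a nonzero polynomial in the parameters. Hence the degree in $\partial$ is exactly $|V|-1-\dist(i,j)$, as claimed. The main obstacle is the bookkeeping in the second paragraph, namely correctly pairing the number of edges of a path/cycle configuration with the complementary number of diagonal $\partial$-factors, since the sign and index conventions of the minor are delicate; but those conventions do not affect the degree count, which is all the statement concerns.
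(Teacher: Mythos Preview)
Your proposal is correct and follows essentially the same approach as the paper's own proof. Both arguments identify the coefficient of $u_i^{(d)}$ with a sum over vertex-disjoint path-plus-cycle configurations and observe that the maximal $d$ is attained by a shortest path alone; the paper phrases this via the identity $\det(\partial I - A)_{ij} = \pm\,\partial/\partial a_{ij}\,\det(\partial I - \tilde A)$ and reads off $d = |V|-1-\dist(i,j)$ from the first nonvanishing $\partial c_k/\partial a_{ij}$, whereas you count covered vertices directly, but the content is the same. Your explicit remark that the distinct shortest paths give distinct monomials (hence no cancellation in the top coefficient) is a point the paper leaves implicit, so your version is if anything slightly more careful there.
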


\begin{proof} 
Let $\tilde{A}$ be the matrix $A$ with the entry $a_{ij}$ nonzero. Let $\calc(\tilde{G})$ be the set of all cycles in $\tilde{G}$, corresponding to a matrix $\tilde{A}$.  To determine the coefficient of the highest-order term in $u_i$, recall that the coefficients of the characteristic polynomial of $\tilde{A}$ can be written as

$$c_{m} =  (-1)^m\sum_{C_{1}, \ldots, C_{k} \in \calc(\tilde{G})} \prod_{l = 1}^{k}  {\rm sign}(C_{l}) a^{C_{l}},$$
where the sum is over all collections of vertex disjoint cycles
involving exactly $i$ edges of $\tilde{G}$, and ${\rm sign}(C) = 1$ if
$C$ is odd length and ${\rm sign}(C) = -1$ if $C$ is even length. This means for every $m$, all cycles of length $m$ appear as monomial terms in $c_{m}$, and for $l > m$, these cycles of length $m$ appear as monomial products with other cycles in $c_{l}$.

We now determine the highest-order term in $u_i$.  Since $\det(\partial{I}-A)_{ij}$ is just the partial derivative of the characteristic polynomial of $\tilde{A}$ with respect to $a_{ij}$, up to a minus sign, then the right-hand side of the input-output equation for output $y_j$ is of the form, where $n=|V|$:
$$
\sum_{i \in In} (-1)^{i+j}\left( \frac{\partial{c_{1}}}{\partial{a_{ij}}}u_{i}^{(n-1)}+\frac{\partial{c_{2}}}{\partial{a_{ij}}}u_{i}^{(n-2)}+\frac{\partial{c_{3}}}{\partial{a_{ij}}}u_{i}^{(n-3)}+\cdots+\frac{\partial{c_{n}}}{\partial{a_{ij}}}u_{i} \right)
$$

We note that not all of these coefficients $\frac{\partial{c_{k}}}{\partial{a_{ij}}}$ for $k =1, ..., n$ are nonzero and thus we must determine the first nonzero coefficient.  

Recall Definition \ref{defn:dist} for the minimal distance between $i$ and $j$.  Let the length of the shortest cycle involving $a_{ij}$ be of length $\dist(i,j)+1$, so that the length of the shortest path from $i$ to $j$ is of length $\dist(i,j)$.  Then the coefficient of the highest-order term in $u_i$ is $\partial{c_{\dist(i,j)+1}}/\partial{a_{ij}}$, which is a sum of the shortest paths (of length $\dist(i,j)$) from $i$ to $j$.  Thus it is of the form $\sum_{P\in{\mathcal{P}(i,j)}: l(P)=\dist(i,j)} a^P$.  This means the highest-order term in $u_i$ is of degree $|V|-(\dist(i,j)+1) = |V|-1-\dist(i,j)$.
\end{proof}

We now give a formula for the number of coefficients of the input-output equation in the case of either single input or single output.

\begin{thm} [Number of nonzero coefficients] \label{thm:numcoeffs} Let $\cm=(G,\{i_1, i_2, ..., i_{|In|}\},\{j\},L)$ represent a linear compartmental model with $G$ output connectable with at least $|In \cup Out|$ leaks with $In \cup Out \subseteq L$.  There are $|V| + n|V|-\sum_{k} \dist(i_k,j) + m(|V|-1) $ nonzero coefficients where $n = |In - Out|$ and $m=|In \cap Out|$. Now let $\cm=(G,\{i\},\{j_1, j_2, ..., j_{|Out|}\},L)$ represent a linear compartmental model with $G$ output connectable to every output with at least $|In \cup Out|$ leaks with $In \cup Out \subseteq L$.  There are $|V| + n|V|-\sum_{k} \dist(i,j_k)) + m (|V|-1) $ nonzero coefficients where $n = |Out - In|$ and $m=|In \cap Out|$.
\end{thm}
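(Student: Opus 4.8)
My plan is to count the nonzero coefficients by treating the left-hand side and right-hand side of the input-output equation separately, then summing over the inputs/outputs. For the single-output model $\cm=(G,\{i_1,\ldots,i_{|In|}\},\{j\},L)$, the left-hand side of Equation \ref{eq:output-connectable-i-o} is $\det(\partial I - A)y_j$, which is the characteristic polynomial of $A$ applied to $y_j$. Since every compartment has a self-loop (because $In\cup Out\subseteq L$ with enough leaks, and more importantly the diagonal entries $a_{ii}$ are generically nonzero), the characteristic polynomial has degree $|V|$ with all $|V|$ of its lower-order coefficients $c_1,\ldots,c_{|V|}$ nonzero (plus the leading monic term which we do not count as an unknown). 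This accounts for the leading $|V|$ term in the formula.

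\textbf{Counting the right-hand side via the two cases $i_k\neq j$ and $i_k=j$.} The right-hand side is a sum over inputs $i_k$, so I would count the nonzero coefficients contributed by each $u_{i_k}$ separately and add them. I split the inputs into those with $i_k\notin Out$ (there are $n=|In-Out|$ of these) and the single input coinciding with the output when $j\in In$ (contributing the $m=|In\cap Out|$ term, where $m\in\{0,1\}$). For an input $i_k\neq j$, the coefficients come from $\det(\partial I - A)_{i_kj}$, equivalently $\partial(\det(\partial I-A))/\partial a_{i_kj}$ up to sign, as established in the proof of Proposition \ref{prop:factor}. By Lemma \ref{lemma:hot}, the highest-order term in $u_{i_k}$ has degree $|V|-1-\dist(i_k,j)$, so the associated polynomial in $\partial$ has degree $|V|-1-\dist(i_k,j)$, giving $|V|-\dist(i_k,j)$ coefficients (from the top term down through the constant term). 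For the diagonal case $i_k=j$, the relevant minor is $\det(\partial I-A)_{jj}$, the characteristic polynomial of the principal submatrix $A_{jj}$ of size $|V|-1$, which is monic of degree $|V|-1$ and hence contributes $|V|-1$ coefficients. Summing over all inputs yields $|V|+\sum_k\bigl(|V|-\dist(i_k,j)\bigr)+m(|V|-1)$ where the sum ranges over the $n$ inputs distinct from $j$, which rearranges to $|V|+n|V|-\sum_k\dist(i_k,j)+m(|V|-1)$, matching the claimed formula.

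\textbf{The multiple-output case.} For the model $\cm=(G,\{i\},\{j_1,\ldots,j_{|Out|}\},L)$ with $G$ output connectable to every output, I would apply Theorem \ref{thm:ioscc} to obtain one input-output equation per output $j_\ell$, using the whole matrix $A$ since output-connectability-to-every-output guarantees the output-reachable subgraph to each $y_{j_\ell}$ is all of $G$. Each individual equation's right-hand side has a single input $i$, so it contributes $|V|-\dist(i,j_\ell)$ coefficients when $j_\ell\neq i$ and $|V|-1$ coefficients when $j_\ell=i$, exactly by the single-input analysis above with the roles of the distance arguments now indexed by output. The left-hand side of each equation is again the degree-$|V|$ characteristic polynomial of $A$; here the subtlety is that the left-hand coefficients $c_1,\ldots,c_{|V|}$ are \emph{the same} across all outputs, so they should be counted only once rather than $|Out|$ times. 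This gives the leading $|V|$, and summing the right-hand contributions over outputs gives $n|V|-\sum_k\dist(i,j_k)+m(|V|-1)$ with $n=|Out-In|$ and $m=|In\cap Out|$.

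\textbf{The main obstacle.} The delicate point, and the one I would argue most carefully, is the claim that all the counted coefficients are genuinely nonzero and that there is no hidden collapse or coincidence among them. Nonzero-ness of the left-hand coefficients follows from the generic nonvanishing of the relevant vertex-disjoint-cycle sums once self-cycles are present, and nonzero-ness of each right-hand leading coefficient follows from Lemma \ref{lemma:hot} together with the fact that lower-order coefficients are sums of monomial products of paths and cycles that do not generically cancel; the hypothesis of enough leaks with $In\cup Out\subseteq L$ is what ensures the diagonal self-cycles exist so these sums are nonzero. For the multiple-output case, the additional care is in asserting that the $|Out|$ equations share their entire left-hand side so that the $|V|$ characteristic-polynomial coefficients are counted a single time, and in verifying that each $\dist(i,j_\ell)$ is well-defined and finite, which is exactly guaranteed by output-connectability to every output. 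Once these nonvanishing and counting-once facts are secured, the arithmetic rearrangement into the stated closed form is routine.
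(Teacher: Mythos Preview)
Your counting argument is correct and matches the paper's approach exactly: you split into left-hand side ($|V|$ coefficients from the characteristic polynomial), right-hand side per input distinct from the output ($|V|-\dist(i_k,j)$ each), and the diagonal minor when input equals output ($|V|-1$); and for the multi-output case you correctly observe that the $|V|$ left-hand coefficients are shared across all equations and counted once.

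The genuine gap is in your nonvanishing argument when $L\subsetneq V$. You assert that ``lower-order coefficients are sums of monomial products of paths and cycles that do not generically cancel'' and that ``the hypothesis of enough leaks with $In\cup Out\subseteq L$ is what ensures the diagonal self-cycles exist so these sums are nonzero.'' But having nonzero diagonal entries is not the issue. When $k\notin L$, the entry $a_{kk}$ is not an independent parameter; it is the linear form $-\sum_{l}a_{lk}$ in the edge parameters. After this substitution, a coefficient that was a signed sum of vertex-disjoint cycle products becomes a polynomial in the edge parameters alone, and there is genuine potential for cancellation: a self-cycle monomial $a_{kk}\cdot(\text{rest})$ expands into terms $-a_{lk}\cdot(\text{rest})$, some of which may coincide (up to sign) with honest cycle or path monomials already present. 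Generic independence of the $a_{ij}$'s does not by itself rule this out.

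The paper's proof handles this by producing, for each coefficient, an explicit monomial that survives substitution and cannot be a product of cycles and input-to-output paths, hence cannot cancel against anything else in that coefficient. The mechanism uses output-connectability: among the non-leak vertices appearing in a cycle, at least one (say $k_1$) has an outgoing edge $k_1\to r$ toward the output that is not part of the cycle, so the substitution $a_{k_1k_1}=-a_{k_lk_1}-a_{rk_1}-\cdots$ produces a ``broken'' monomial $a_{k_1k_2}\cdots a_{rk_1}$ that is neither a cycle nor a path from input to output (the input/output vertices are in $L$ and hence never substituted). This monomial therefore appears exactly once. You correctly flagged nonvanishing as the delicate point, but the actual argument requires this structural use of output-connectability together with $In\cup Out\subseteq L$, not merely the nonvanishing of the diagonal entries.
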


\begin{proof} Assume $\cm=(G,\{i_1, i_2, ..., i_{|In|}\},\{j\},L)$ is a linear compartmental model with $G$ output connectable with at least $|In \cup Out|$ leaks with $In \cup Out \subseteq L$. By Equation (\ref{eq:output-connectable-i-o}), the highest degree term is $|V|$ on the left hand side.  For the right hand side, $|In \cap Out|$ is either $1$ or $0$.  If $|In \cap Out|=1$, then the highest-order term in $u_j$ is of degree $|V|-1$ on the right hand side. The highest-order term in $u_j$ is monic, so there are $|V|-1$ coefficients of terms in $u_j$.  For each $i \in In - Out$, the highest degree term in $u_i$ is of order $|V|-1-\dist(i,j)$ on the right hand side by Lemma \ref{lemma:hot}.  In this case, the highest-order term in $u_i$ is not monic, so there are $|V|-1-\dist(i,j) + 1 = |V|-\dist(i,j)$ coefficients of terms in $u_i$ when $i \neq j$.  Altogether, there are $|V| + n|V|-\sum_{k} \dist(i_k,j)) + m(|V|-1) $ nonzero coefficients where $n = |In - Out|$ and $m=|In \cap Out|$.  

For the case with multiple outputs, let $\cm=(G,\{i\},\{j_1, j_2, ..., j_{|Out|}\},L)$ represent a linear compartmental model with $G$ output connectable to every output with at least $|In \cup Out|$ leaks with $In \cup Out \subseteq L$.  By applying the formula for the case of single output above for each input-output equation, we obtain that there are $|V| + n|V|-\sum_{k} \dist(i,j_k)) + m (|V|-1) $ nonzero coefficients where $n = |Out - In|$ and $m=|In \cap Out|$.

We need only show that the coefficients are nonzero (for a generic choice of parameters).

If there are leaks from every compartment,  Proposition \ref{prop:factor} shows that the coefficients factor through cycles, self-cycles, and paths from input to output.

Now consider the case of removing leaks.  We will be substituting $a_{ii}$ as the negative sum of all outgoing edges when $i \notin L$, but if $i \in L$ then $a_{ii}$ stays the same.  Since $In \cup Out \subseteq L$, we have that every compartment has an outgoing edge or leak, as every vertex has an outgoing edge except for possibly the output vertex by the output connectable assumption. This means the substitution $a_{ii}$ as the negative sum of all outgoing edges and leaks retains the $(i,i)$ entry of $A$ to be nonzero. 

Recall by Proposition \ref{prop:factor}, these coefficients can be factored over cycles, self-cycles, and paths when $L=V$.  Each coefficient, except for the highest order coefficient in $u_{i_k}$ when $i_k \neq j$ (which is a sum of paths from $i_k$ to $j$), must have a term involving a self-cycle.  If the self-cycles in every coefficient are only from leak compartments, we are done.  Otherwise, consider a coefficient that has terms involving self-cycles from non-leak compartments which we must substitute into for the case $L \subset V$.  We want to show that the substitution of the non-leak diagonal terms as the negative sum of all outgoing edges does not cancel every term in that coefficient, so that the coefficients remain nonzero after substitution.  We claim that the substitution of $a_{kk}$ as the negative sum of all outgoing edges for $k \notin L$ cannot create only terms that are products of cycles and paths from input to output.  Since the graph must be output connectable, any cycle formed from the non-leak vertices must connect to the output. In other words, for a chain of vertices in a cycle $k_1, k_2,...,k_l$, one of these vertices must connect to the output via a path from that vertex to the output.  Without loss of generality, assume it is vertex $k_1$.  Thus the substitution of the non-leaks $a_{k_{1}k_{1}}$, $a_{k_{2}k_{2}}$, ...,$a_{k_{l}k_{l}}$ cannot create a single monomial term of the form $\pm a_{k_{1}k_{2}}a_{k_{2}k_{3}}\cdots a_{k_{l}k_{1}}$, but must also create a monomial $\pm a_{k_{1}k_{2}}a_{k_{2}k_{3}}\cdots a_{r k_{1}}$ where we have substituted $a_{k_{1}k_{1}}$ as $-a_{k_{l}k_{1}} - a_{r k_{1}}$ for some vertex $r$ that connects via a path to the output.

This monomial $a_{k_{1}k_{2}}a_{k_{2}k_{3}}\cdots a_{r k_{1}}$ cannot itself be a path from input to output, as the input and output vertices have leaks and thus the corresponding diagonal terms do not get substituted.  Thus, it is not a path from input to output, and thus cannot cancel with any other terms in that coefficient.   
\end{proof}

\begin{rmk} We note that the assumption of at least $|In \cup Out|$ leaks with $In \cup Out \subseteq L$ and $G$ to be output connectable is to prevent the situation where there are no outgoing edges or leaks from a non-leak vertex and thus upon substitution of the diagonal element $a_{ii}$ as the negative sum of all outgoing edges and leaks, it becomes zero.
\end{rmk}

\begin{defn} [Expected number of coefficients] We say a model $\cm=$ \\
$(G,\{i_1, i_2, ..., i_{|In|}\},\{j\},L)$ with $In \cup Out \subseteq L$ has the \textit{expected number of coefficients} if there are 
$|V| + n|V|-\sum_{k} \dist (i_k,j) + m(|V|-1) $ nonzero coefficients in the input-output equations (\ref{eq:i-o-for-general-model}) where $n = |In - Out|$ and $m=|In \cap Out|$. We say a model $\cm=(G,\{i\},\{j_1, j_2, ..., j_{|Out|}\},L)$ with $In \cup Out \subseteq L$ has the \textit{expected number of coefficients} if there are 
$|V| + n|V|-\sum_{k} \dist (i,j_k) + m (|V|-1) $ nonzero coefficients in the input-output equations (\ref{eq:i-o-for-general-model}) where $n = |Out - In|$ and $m=|In \cap Out|$.
\end{defn}

\section{Identifiable path/cycle models} \label{section:idpathcycle}

\begin{defn} We say a model $\cm=(G,In,Out,V)$ has a coefficient map with \textit{expected dimension} if the dimension of the image of the coefficient map is maximal.
\end{defn}

We will be examining a special class of models we call \textit{identifiable path/cycle models}.  This class of models is a generalization of \textit{identifiable cycle models} as defined in \cite{MeshkatSullivantEisenberg}:

\begin{defn} [Identifiable Cycle Models] We say a model $\cm=(G,\{i\},\{i\},V)$ with $G$ strongly connected is an \textit{identifiable cycle model} if all of the independent monomial cycles in the model are locally identifiable.
\end{defn}

\begin{ex} The models $(G, \{1\},\{1\},V)$ and $(H,\{1\},\{1\},V)$ where $G$ corresponds to a chain of exchanges $1 \leftrightarrow 2, 2 \leftrightarrow 3$, etc, and $H$ correspond to a central compartment given by compartment $1$ and exchanges $1 \leftrightarrow 2, 1 \leftrightarrow 3$, etc, are identifiable cycle models due to Theorem 5.13 of \cite{MeshkatSullivant}.  The first model is commonly called a \textit{catenary} model and the second model is called a \textit{mammillary} model \cite{distefano-book}. 
\end{ex}

It was shown in \cite{MeshkatSullivant} that a sufficient condition for a model to be an identifiable cycle model is that the dimension of the image of the coefficient map is $|E|+1$.  We now define the main object of interest in this paper, \textit{identifiable path/cycle models} and spend the rest of this section forming analogous sufficient conditions on the dimension of the image of the coefficient map.

\begin{defn} [Identifiable Path/Cycle Models] \label{defn:pathcycle} We say a model $\cm=(G,In,Out,V)$ is an \textit{identifiable path/cycle model} if all of the independent monomial cycles and monomial paths from input to output in the model are locally identifiable and each parameter is contained in such a cycle or path.
\end{defn}

\begin{rmk} As identifiable path/cycle models require all of the cycles and paths from input to output in the model to be identifiable, it only makes sense to consider models that are \textit{connected and every edge is contained in a cycle or path from input to output}, i.e. strongly input-output connected.  Otherwise, an edge that is not contained in a cycle or path from input to output will not appear in the coefficient map.
\end{rmk}

\begin{ex} [Continuation of Example \ref{ex:continuing}] The model $\cm=(G,\{1\},\{2\},V)$ with $G$ given by the graph \{ $1 \rightarrow 2, 2 \rightarrow 3, 3 \rightarrow 2, 3 \rightarrow 4, 4 \rightarrow 3$ \} is an identifiable path/cycle model, with identifiable paths and cycles given by $a_{11}, a_{22}, a_{33}, a_{44}, a_{21}, a_{23}a_{32}, a_{34}a_{43}$. This can be demonstrated by writing each path and cycle as a function of the coefficients $c_i$, e.g. using Groebner Bases. Further justification will come from Theorem \ref{thm:idpathcycle}.
\end{ex}

For models with leaks from every compartment, the dimension of the image of the coefficient map is bounded above by the number of independent paths and cycles in the graph from Proposition \ref{prop:factor}. We now determine what this number is. We first show that when $G$ is output connectable for the case of single output, there are $|E|+|In \cup Out|$ independent directed paths and undirected cycles.  We then examine the case where $G$ is strongly input-output connected so that the indicator vectors for the independent directed paths and directed cycles in the graph correspond to $0/1$ vectors.  We show that this number of independent paths and cycles is equal to $|E|+ |In \cup Out|$.

We define the $|V|$ by $|E|$ \textit{incidence matrix} $E(G)$ as:

\begin{equation}\label{eq:eg}
  E(G)_{i,(j,k)} = \left\{ 
  \begin{array}{l l l}
    1 & \quad \text{if $i=j$}\\
   -1 & \quad \text{if $i=k$}\\
    0 & \quad \text{otherwise.}\\
  \end{array} \right.
\end{equation}
In other words, $E(G)$ has column vectors corresponding to the edges 
$j\rightarrow{k} \in E$ with a $1$ in the $jth$ row, $-1$ in the $kth$ row, and $0$ 
otherwise.  We define the \textit{indicator vector} of a directed cycle $C$ as the vector $(x_s)_{s \in E}$ such that $x_s = 1$ if $s \in E_C$ and $x_s = 0$ if $s \notin E_C$, where $E_C$ is the set of edges associated to the directed cycle $C$.

We can also define the \textit{indicator vector} of an undirected cycle $C'$ with associated directed cycle $C$ (reversing arrows to all point in the same direction) as the vector $(x_s)_{s \in E}$ such that $x_s = 1$ if $s \in E_C$, $x_s = -1$ if $-s \in E_C$, and $x_s = 0$ if $s \notin E_C$, where $E_C$ is the set of edges associated to the directed cycle $C$ and $-s$ corresponds to an edge $s$ going in the opposite direction. In other words, if $s$ corresponds to $i \to j$, then $-s$ corresponds to $j \to i$.  

The rank of the directed incidence matrix is well-known:

\begin{prop} [Proposition 4.3 of \cite{Biggs}] Let $G$ be a graph with $|V|$ vertices, $|E|$ edges, and $l$ connected components.  Then the rank of $E(G)$ is $|V|-l$.  Thus, the dimension of the kernel of $E(G)$ is $|E|-|V|+l$.
\end{prop}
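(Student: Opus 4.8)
The plan is to compute the rank of $E(G)$ by identifying its left null space, and then invoke the rank--nullity theorem to read off the dimension of the kernel. Since $E(G)$ is a $|V| \times |E|$ matrix, its rank equals $|V|$ minus the dimension of the left null space $\{ y \in \rr^{|V|} : y^{T} E(G) = 0 \}$, so it suffices to determine this left null space explicitly.

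First I would compute $y^{T} E(G)$ columnwise. By the definition of $E(G)$ in Equation~\eqref{eq:eg}, the column indexed by an edge $j \to k$ has a $1$ in row $j$, a $-1$ in row $k$, and zeros elsewhere; hence the component of $y^{T} E(G)$ in that column is exactly $y_j - y_k$. Therefore $y$ lies in the left null space if and only if $y_j = y_k$ for every edge $j \to k$ of $G$.

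Next I would translate this edge condition into a statement about connected components. If $y_j = y_k$ along every edge, then $y$ takes a common value along any directed or undirected path, so $y$ is constant on each connected component of $G$; conversely, any vector constant on each component clearly satisfies the edge condition. Thus the left null space is precisely the space of vectors that are constant on connected components, which has dimension $l$, one free parameter per component. It follows that $\mathrm{rank}\, E(G) = |V| - l$.

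Finally, applying the rank--nullity theorem to $E(G)$ viewed as a linear map $\rr^{|E|} \to \rr^{|V|}$ yields $\dim \ker E(G) = |E| - \mathrm{rank}\, E(G) = |E| - |V| + l$, as claimed. The only step requiring any care is the connectivity argument, namely propagating the equalities $y_j = y_k$ along paths to conclude constancy on each component; but this is routine, so I do not anticipate a genuine obstacle. The statement is classical, which is why the paper cites it directly rather than reproving it.
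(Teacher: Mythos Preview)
Your argument is correct: computing the left null space of $E(G)$ via the edge conditions $y_j = y_k$, identifying it with the space of functions constant on connected components, and then applying rank--nullity is exactly the standard proof of this classical fact. The paper does not supply its own proof of this proposition---it simply cites it from Biggs---so there is nothing further to compare.
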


We state one final result from \cite{MeshkatSullivant, MeshkatSullivantEisenberg}, which shows that the kernel of $E(G)$ can be written in terms of $|E|-|V|+1$ directed cycles when $G$ is strongly connected, thus the indicator vectors are $0/1$ vectors:

\begin{prop} \label{prop:kernelincidence} [Proposition 4 of \cite{MeshkatSullivantEisenberg}] Let $G$ be a strongly connected graph.  Then a set of $|E|-|V|+1$ linearly independent indicator vectors of directed cycles form a basis for the kernel of $E(G)$.  
\end{prop}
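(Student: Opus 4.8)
The plan is to combine a dimension count with an explicit construction of directed cycles. First I would check that every directed-cycle indicator vector lies in $\ker E(G)$: if $C$ is a directed cycle and $i$ is a vertex on $C$, then $C$ uses exactly one edge out of $i$ and one edge into $i$, contributing $+1$ and $-1$ respectively to the $i$-th coordinate of $E(G)x$, so $(E(G)x)_i = 0$; for $i \notin C$ all contributions vanish. Hence the span $W$ of all directed-cycle indicator vectors satisfies $W \subseteq \ker E(G)$. Since $G$ is strongly connected it is in particular connected, so the preceding proposition (the rank formula for the incidence matrix, with $l=1$) gives $\dim \ker E(G) = |E| - |V| + 1$. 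Consequently, once we know that there exist $|E|-|V|+1$ linearly independent directed-cycle vectors, any such collection is a linearly independent set of full size inside a space of dimension $|E|-|V|+1$, hence a basis.

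It therefore remains to exhibit $|E|-|V|+1$ linearly independent directed cycles, and this is where strong connectivity is essential. The plan is to build $G$ up by an ear decomposition: starting from a single directed cycle $G_0$ (which exists because $G$ is strongly connected), one obtains a chain $G_0 \subset G_1 \subset \cdots \subset G_k = G$, where each $G_{t+1}$ is formed from $G_t$ by attaching a directed ear $P_t$, namely a directed path whose two endpoints lie in $G_t$ and whose internal vertices are new. An ear with $\ell$ edges adds $\ell$ edges and $\ell - 1$ vertices, so each ear raises $|E| - |V|$ by exactly one; since $G_0$ contributes nullity $1$, we get $k = |E| - |V|$ and thus $k+1 = |E| - |V| + 1$ stages. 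For the base cycle $G_0$ I would take its indicator vector $C_0$. For each ear $P_t$, using that $G_t$ is itself strongly connected I would choose a directed path $Q_t$ in $G_t$ from the terminal endpoint of $P_t$ back to its initial endpoint; concatenating $P_t$ with $Q_t$ yields a closed directed walk, which decomposes into directed cycles, at least one of which, call it $C_t$, uses an edge of the new ear $P_t$.

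Finally I would verify linear independence by a triangularity argument: order the cycles as $C_0, C_1, \ldots, C_k$. Every edge used by $C_0, \ldots, C_{t-1}$ lies in $G_t$, whereas $C_t$ contains an edge of $P_t$, which is not present in $G_t$. Thus each $C_t$ involves an edge absent from all earlier cycles, forcing $C_0, \ldots, C_k$ to be linearly independent. This produces the required $|E|-|V|+1$ independent directed-cycle vectors, and the dimension count above then finishes the proof.

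I expect the main obstacle to be the construction step rather than the dimension count: for a general connected graph the kernel of $E(G)$ is only spanned by \emph{signed} cycles (with $\pm 1$ entries), and it is precisely strong connectivity, encoded here through the existence of the return paths $Q_t$ and, more structurally, through the ear decomposition, that upgrades these to genuine directed cycles with $0/1$ indicator vectors. If one prefers to avoid invoking ear decomposition as a black box, the same effect can be achieved by an inductive construction that repeatedly adjoins to the current strongly connected subgraph an edge leaving it together with a directed return path guaranteed by strong connectivity.
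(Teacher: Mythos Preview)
Your proof is correct. The paper does not actually prove this proposition; it is quoted without proof as Proposition~4 of \cite{MeshkatSullivantEisenberg}, so there is no argument in the present paper to compare against. Your approach---checking that directed-cycle indicator vectors lie in $\ker E(G)$, invoking the rank formula to get $\dim\ker E(G)=|E|-|V|+1$, and then producing that many independent directed cycles via an ear decomposition of the strongly connected graph---is the standard one and is carried out cleanly. The triangularity argument for independence (each new cycle $C_t$ uses an edge of the ear $P_t$, which is absent from $G_t$ and hence from all earlier cycles) is exactly right. One small simplification you could make: since the internal vertices of the ear $P_t$ are new and $G_t$ is strongly connected, choosing $Q_t$ to be a shortest directed path in $G_t$ already makes $P_t\cup Q_t$ a simple directed cycle, so the decomposition of a closed walk into cycles is not really needed.
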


In other words, this proposition shows that the space of all undirected cycles can be generated by the space of all directed cycles when $G$ is strongly connected.  We now prove a similar result in terms of cycles and paths from input to output when $G$ is strongly input-output connected.

\begin{prop} \label{prop:undirected} Let $\cm=(G,In,Out,V)$ represent a linear compartmental model with $G$ strongly input-output connected. Then the space of all directed paths and undirected cycles can be generated by the space of all directed paths and directed cycles and vice versa.
\end{prop}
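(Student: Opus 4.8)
The plan is to show the two spanning sets generate the same subspace of $\rr^{|E|}$, the only real content being that every undirected cycle lies in the span of the directed cycles together with the directed paths from input to output. Write $D$ for the span of the indicator vectors of the directed paths from input to output and of the directed cycles of $G$, and write $U$ for the span of the same directed paths together with the undirected cycles; both are subspaces of $\rr^{|E|}$, and I want to prove $D = U$. The inclusion $D \subseteq U$ is immediate: a directed cycle is an undirected cycle whose associated directed cycle is itself, so its indicator vector is the all-ones vector on its edges and is already an undirected-cycle indicator vector. Since the directed paths are common generators of both spaces, $D \subseteq U$.

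For the reverse inclusion I would pass to a strongly connected completion. Let $\wg$ be the graph obtained from $G$ by adjoining an edge from each output compartment to each input compartment; as in the earlier proposition, $\wg$ is strongly connected. Let $\pi \colon \rr^{|E(\wg)|} \to \rr^{|E|}$ be the linear projection forgetting the coordinates indexed by the added edges. I then rely on three observations. First, by Proposition \ref{prop:kernelincidence} the indicator vectors of the directed cycles of $\wg$ span $\ker E(\wg)$. Second, a simple directed cycle of $\wg$ uses each added edge at most once, and deleting the added edges from it leaves a vertex-disjoint union of directed cycles of $G$ and directed paths of $G$ from an input to an output; hence $\pi$ sends every directed cycle of $\wg$ into $D$, and therefore $\pi\bigl(\ker E(\wg)\bigr) \subseteq D$. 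Third, the indicator vector of any undirected cycle of $G$ lies in $\ker E(G)$, so padding it with zeros on the added edges produces a vector of $\ker E(\wg)$, because those extra columns of $E(\wg)$ are then multiplied by zero.

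Combining these, applying $\pi$ to the padded indicator vector of an undirected cycle of $G$ returns the original indicator vector, which therefore lies in $\pi\bigl(\ker E(\wg)\bigr) \subseteq D$. Thus every undirected cycle of $G$ belongs to $D$, and since the directed paths also generate $D$, we obtain $U \subseteq D$ and hence $D = U$, proving both directions of the statement.

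I expect the main obstacle to be the bookkeeping in the second observation: one must check carefully that deleting the adjoined output-to-input edges from a simple directed cycle of $\wg$ yields only directed cycles of $G$ and directed paths running from an input to an output, so that the projection genuinely lands in $D$, and one must track the orientation signs in the undirected-cycle indicator vectors when asserting that the padded vector is annihilated by $E(\wg)$. A secondary point is confirming that the single-input and single-output completions of the earlier propositions extend to the general multiple-input/multiple-output setting while keeping $\wg$ strongly connected.
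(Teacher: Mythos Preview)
Your argument is correct and takes a genuinely different route from the paper's. The paper works directly inside $G$: for each edge carrying a $-1$ in an undirected cycle's indicator vector it adds a suitable directed cycle or directed input--output path containing that edge, obtaining a nonnegative vector whose associated multigraph has balanced in/out degree except at input/output vertices, and then decomposes that multigraph into directed cycles and directed input--output paths. Your approach instead passes to the strongly connected completion $\wg$, invokes Proposition~\ref{prop:kernelincidence} to get directed cycles of $\wg$ spanning $\ker E(\wg)$, and projects back. This is cleaner and reuses existing machinery; the paper's approach is more hands-on but avoids having to construct and justify the completion.

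Two small points. First, your second observation should read: a simple directed cycle of $\wg$ that uses no added edge is already a directed cycle of $G$, while one that does use added edges, after their deletion, becomes a disjoint union of simple directed paths from an input to an output (never a mix of cycles and paths, since removing edges from a simple cycle cannot produce a cycle). This does not affect the conclusion $\pi(\ker E(\wg))\subseteq D$. Second, the concern you raise about $\wg$ being strongly connected in the general multiple-input/multiple-output case is easily settled: since $G$ is strongly input-output connected, every edge of $G$ lies in a directed cycle of $\wg$ (paths from $i\in In$ to $j\in Out$ close up via the added edge $j\to i$), so any two adjacent vertices of $G$ lie in the same strongly connected component of $\wg$; connectedness of the underlying graph of $G$ then forces all vertices into one component.
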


\begin{proof}
Let $B$ have as its columns the indicator vectors of all directed paths and undirected cycles.  We show that, for every undirected cycle, we can add a positive integer multiple of a directed path vector or directed cycle vector to obtain either a directed cycle or directed path from input to output.  Since $G$ is strongly input-output connected, every edge is in either a cycle or path from input to output.  For every edge with a negative entry in the indicator vector of an undirected cycle, that edge either belongs to a cycle or path from input to output.  If it belongs to a path from input to output, one can add a positive multiple of the path to the undirected cycle to achieve only non-negative entries corresponding to a closed path or path from input to output.  If it does not belong to a path from input to output, then it belongs to a directed cycle.  Thus one can add a positive multiple of the directed cycle to the undirected cycle to achieve only non-negative entries corresponding to a closed path or path from input to output.  In either case, this corresponds to a multigraph with the property that the indegree of each vertex equals the outdegree of each vertex except possibly at input and output vertices.  Cycles can be removed so that the result is a cycle or a path from input to output.
\end{proof}

\begin{lemma}\label{lemma:numpathscycles} Let $\cm=(G,In,Out,V)$ represent a linear compartmental model with $G$ output connectable if $|Out|=1$ or $G$ strongly input-output connected otherwise. Then the number of independent undirected cycles and directed paths from input to output is $|E|+|In \cup Out|$.
\end{lemma}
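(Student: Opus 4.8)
The plan is to read off the count as the dimension of the span, inside $\rr^{|E|+|V|}$, of the indicator vectors of the self-cycles, the directed cycles of length $\geq 2$, and the directed input-output paths, and to show this dimension equals $|V| + (|E|-|V|+1) + (|In\cup Out|-1) = |E|+|In\cup Out|$. First I would peel off the $|V|$ self-cycles $a_{ii}$: their indicator vectors are the standard basis vectors of the $|V|$ ``diagonal'' coordinates of $\rr^{|E|+|V|}$, hence independent of each other and of every other cycle/path vector, since the latter only involve the $|E|$ edge coordinates. It then suffices to show that the span $W\subseteq\rr^{|E|}$ of the indicator vectors of the undirected cycles and the directed input-output paths has dimension $(|E|-|V|+1)+(|In\cup Out|-1)$.

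To analyze $W$ I would apply the incidence matrix $E(G)$. Every undirected cycle vector lies in $\ker E(G)$, and it is classical (the dimension being Proposition 4.3 of \cite{Biggs}) that the undirected cycles span all of $\ker E(G)$, which has dimension $|E|-|V|+1$ because $G$ is connected. Connectedness holds in both cases: output connectability forces every vertex to reach the output, hence $G$ is connected, and strong input-output connectedness includes connectedness by definition. On the other hand, the indicator vector of a directed path from $i\in In$ to $o\in Out$ is sent by $E(G)$ to $e_i-e_o$, while every cycle is sent to $0$. Since $\ker E(G)\subseteq W$, the restriction of $E(G)$ to $W$ gives $\dim W=\dim\ker E(G)+\dim E(G)(W)=(|E|-|V|+1)+\dim\operatorname{span}\{e_i-e_o:\text{there is a path }i\to o\}$. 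The last span is the column space of the (oriented) incidence matrix of the auxiliary graph $H$ on vertex set $In\cup Out$ whose edges are the pairs $(i,o)$ joined by a directed path in $G$, so its dimension is $|In\cup Out|-c$, where $c$ is the number of connected components of $H$. Everything therefore reduces to proving $c=1$.

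The main obstacle is exactly this claim that $H$ is connected. For a single output $o$, output connectability makes every input reach $o$, so $H$ is a star and $c=1$. For multiple outputs I would argue from strong input-output connectedness. First, at least one input-output path exists: if every edge lay on a cycle, then the two endpoints of each edge would share a strong component, and connectedness of $G$ would force a single strong component, i.e. $G$ strongly connected, whence inputs reach outputs after all. Next, for each component $K_t$ of $H$ let $V_t$ be the set of vertices lying on some directed path from an input of $K_t$ to an output of $K_t$. A cross-path argument shows the $V_t$ are pairwise disjoint: a vertex on paths for both $K_t$ and $K_{t'}$ yields a path from an input of $K_t$ to an output of $K_{t'}$, forcing $K_t=K_{t'}$. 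Likewise, any cycle meeting $V_t$ lies entirely in $V_t$, since every vertex of that cycle is reachable from the relevant input and reaches the relevant output. Consequently every edge of $G$ has both endpoints in a single part among $V_0=V\setminus\bigcup_t V_t,\ V_1,\dots,V_c$, so these parts are unions of connected components of $G$. As $G$ is connected, exactly one part is nonempty; because some input-output path exists it must be some $V_t$, forcing $V_0=\varnothing$. Any other component $K_{t'}$ would then have $V_{t'}=\varnothing$, making it a single isolated input or output whose incident edges can only lie on free cycles, which would place it in the now-empty $V_0$ — a contradiction. Hence $c=1$.

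Finally I would assemble the pieces: $\dim W=(|E|-|V|+1)+(|In\cup Out|-1)$, and adding back the $|V|$ self-cycles yields $|E|+|In\cup Out|$ independent cycles and paths, as claimed. The only delicate points are the cross-path and cycle-absorption bookkeeping in the proof that $H$ is connected, together with the strong-component observation ruling out the degenerate case with no input-output path; the remainder is the standard rank computation for incidence matrices.
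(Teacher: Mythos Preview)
Your proof is correct and follows essentially the same approach as the paper: both apply the incidence matrix $E(G)$ to the span of the cycle and path indicator vectors, identify the image with the column space of the incidence matrix of an auxiliary graph on $In\cup Out$, and use rank--nullity together with connectedness of that auxiliary graph to obtain $|E|-|V|+|In\cup Out|$ before adding back the $|V|$ self-cycles. The only real difference is that you supply a detailed argument for why the auxiliary graph $H$ is connected in the strongly input-output connected case, whereas the paper simply asserts this; your cross-path and cycle-absorption partition argument is a genuine addition, though the final step could be stated more directly by noting that any vertex $v$ of a putative second component $K_{t'}$ must lie in the unique nonempty block $V_t$ and hence sit on some $i\to o$ path with $i,o\in K_t$, immediately producing an $H$-edge between $K_t$ and $K_{t'}$.
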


\begin{proof} Let the matrix $B$ have as columns the indicator vectors of the \textit{undirected} cycles and directed paths from input to output vertices.  Since $G$ is either output connectable in the single output case or strongly input-output connected, then $G$ is certainly connected, and thus there are $|E|-|V|+1$ undirected cycles that form a basis for the kernel of $E(G)$ and there must be at least one path from input to output, so $B$ is certainly not the zero matrix.  Form the product $E(G)B$.  If column $k$ of $B$ corresponds to a cycle, then column $k$ of $E(G)B$ will be zero, and if column $k$ of $B$ corresponds to a path from input in $i$ and output in $j$, then column $k$ of $E(G)B$ will have a $1$ in row $i$ and a $-1$ in row $j$. 

Remove the zero columns and duplicate columns (which occur when there is more than one path from an input to an output) and zero rows from this matrix $E(G)B$ and call the resulting matrix $M$.  We claim $M$ is the incidence matrix of the graph where there are $|In \cup Out|$ vertices corresponding to each input/output compartment and there is a directed edge from an input compartment to an output compartment if and only if there is a path from the corresponding input to the corresponding output in the graph $G$.  Call this graph $G_M$.  Note that there are only $|In \cup Out|$ vertices in $G_M$ because we deleted zero rows from the matrix  $E(G)B$ to obtain the matrix $M$, thus deleting vertices that do not correspond to inputs or outputs.  This graph $G_M$ must be connected because we assumed $G$ is output connectable in the single output case and strongly input-output connected otherwise.  Since the rank of the incidence matrix for a connected graph is the number of vertices minus one, this means the rank of $E(G)B$ is $|In \cup Out| - 1$.

Since the rank of $E(G)B$ is equal to the rank of $B$ minus the dimension of the column space of $B$ intersected with the kernel of $E(G)$, which is exactly $|E|-|V|+1$ because $B$ is generated by paths and undirected cycles and a basis for the kernel of $E(G)$ is given by undirected cycles, then this means the rank of $B$ is exactly $|E|-|V|+1+|In \cup Out|-1=|E|-|V|+|In \cup Out|$. Adding the $|V|$ self-cycles, we obtain that the dimension of the path/cycle map is $|V|+|E|-|V|+|In \cup Out|=|E|+|In \cup Out|$. 
\end{proof}

\begin{cor}\label{cor:numpathscycles} Let $\cm=(G,In,Out,V)$ represent a linear compartmental model with $G$ strongly input-output connected.  Then the number of independent directed cycles and directed paths from input to output is $|E|+|In \cup Out|$. 
\end{cor}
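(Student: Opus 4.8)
The plan is to deduce the statement directly from Lemma \ref{lemma:numpathscycles} together with Proposition \ref{prop:undirected}, since the corollary differs from the lemma only by replacing \emph{undirected} cycles with \emph{directed} cycles. The guiding observation is that two sets of vectors spanning the same linear subspace have the same rank, so once we know the two notions generate the same space, the two counts must agree.

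First I would verify that the hypotheses of Lemma \ref{lemma:numpathscycles} are met. That lemma requires $G$ to be output connectable when $|Out|=1$ and strongly input-output connected otherwise. Since we are assuming $G$ is strongly input-output connected, the multiple-output hypothesis holds outright, and in the single-output case Proposition \ref{prop:outputreach} tells us that strongly input-output connected implies output connectable. Hence in every case Lemma \ref{lemma:numpathscycles} applies and yields that the number of independent undirected cycles and directed paths from input to output equals $|E|+|In \cup Out|$.

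Next I would invoke Proposition \ref{prop:undirected}, which asserts that for $G$ strongly input-output connected the space generated by the indicator vectors of directed paths and directed cycles coincides with the space generated by the indicator vectors of directed paths and undirected cycles. Because these two families of vectors span one and the same subspace of $\rr^{|E|}$, their ranks are equal; adjoining the $|V|$ self-cycles, which live in the disjoint coordinates of $\rr^{|E|+|V|}$ and are automatically independent, preserves this equality. Therefore the number of independent directed cycles and directed paths from input to output is again $|E|+|In \cup Out|$.

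I do not expect a genuine obstacle here: all of the combinatorial work has already been absorbed into Proposition \ref{prop:undirected} (the delicate multigraph degree-balancing argument that turns an undirected cycle into a combination of directed cycles and paths) and into the rank computation in Lemma \ref{lemma:numpathscycles}. The only point demanding care is the bookkeeping — making sure the self-cycle coordinates are handled separately from the edge coordinates so that the dimension count transfers cleanly, and confirming that ``same span'' is indeed being used to conclude ``same rank'' rather than merely ``same cardinality.''
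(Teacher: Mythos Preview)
Your proposal is correct and follows essentially the same route as the paper's proof: verify the hypotheses of Lemma~\ref{lemma:numpathscycles} (using Proposition~\ref{prop:outputreach} in the single-output case), then invoke Proposition~\ref{prop:undirected} to pass from undirected to directed cycles. Your added remarks about ``same span implies same rank'' and the separate handling of the self-cycle coordinates are accurate and make explicit what the paper leaves implicit.
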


\begin{proof} If $|Out|=1$, strongly input-output connected implies output connectable and if $|Out|>1$, we have strongly input-output connected.  The statement follows from Lemma \ref{lemma:numpathscycles} and Proposition \ref{prop:undirected} to achieve a set of $|E|+|In \cup Out|$ independent directed cycles and directed paths from input to output.
\end{proof}

We now show that the dimension of the image of the coefficient map is bounded above by the number of independent paths and cycles.  We will add the important assumption of either $|In|=1$ or $|Out|=1$ so that the number of distinct input-output pairs equals $|In \cup Out|-1$, described in the Remark below.  For $|Out|=1$ we can assume $G$ is strongly input-output connected as stated in Corollary \ref{cor:numpathscycles}, but for the case of $|In|=1$ we will assume $G$ is strongly connected in order to ensure the input-output equations are irreducible as shown in Section 2.  For the special case where $|In|=|Out|=1$ and $In = Out$, we note that strongly input-output connected reduces to strongly connected.  For the special case where $|In|=|Out|=1$ and $In \neq Out$, then strongly input-output connected is sufficient in what follows, i.e. we can take the weaker of the two conditions strongly input-output connected and strongly connected.

\begin{lemma} \label{lemma:dimc} Let $\cm=(G,In,Out,V)$ represent a linear compartmental model.  Assume that either $G$ is strongly input-output connected and $|Out|=1$ or $G$ is strongly connected and $|In|=1$. The dimension of the image of the coefficient map is bounded above by $|E|+|In \cup Out|$.
\end{lemma}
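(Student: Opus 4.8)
The plan is to bound $\dim \im(c)$ by first factoring the coefficient map through the path/cycle map $\pi$ and then computing $\dim\im(\pi)$ as the rank of the matrix of indicator vectors. First I would check that both hypotheses put us in a setting where the earlier results apply. Since strongly connected implies strongly input-output connected, in either case $G$ is strongly input-output connected; and by Proposition \ref{prop:outputreach} (the single-output case) or directly (the strongly connected case) the model is output connectable. Hence Proposition \ref{prop:factor} applies: in the single-output case it gives a polynomial map $\psi$ with $c = \psi \circ \pi$, and in the multiple-output case ($|In|=1$) one concatenates the per-output factorizations, all of which route through the single path/cycle map $\pi$ that collects every cycle and every path to every output.

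The key reduction is that a morphism cannot raise dimension. Since $c = \psi\circ\pi$ we have $\im(c) = \psi(\im \pi)$, and therefore $\dim\im(c) \le \dim\im(\pi)$. It thus suffices to show $\dim\im(\pi) = |E| + |In\cup Out|$.

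To compute $\dim\im(\pi)$ I would use that $\pi$ is a monomial map: each coordinate $a^C$ is the squarefree monomial $\prod_{s} a_s^{m_{C,s}}$ whose exponent vector is precisely the indicator vector of the cycle or path $C$ on the $|E|$ edge-parameters together with the $|V|$ diagonal self-cycle parameters. For such a map the generic rank of the Jacobian, and hence the dimension of the image, equals the rank of the exponent matrix $M$ whose rows are these indicator vectors. Concretely, $\partial_{a_s}\pi_C = m_{C,s}\,\pi_C/a_s$, so the Jacobian factors as $\mathrm{diag}(\pi_C)\,M\,\mathrm{diag}(1/a_s)$; at a generic point the two diagonal matrices are invertible, giving $\mathrm{rank}(J)=\mathrm{rank}(M)$. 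The rank of $M$ is exactly the number of linearly independent directed cycles and directed paths from input to output, which by Corollary \ref{cor:numpathscycles} (applicable since $G$ is strongly input-output connected) equals $|E| + |In\cup Out|$. Combining, $\dim\im(c)\le\dim\im(\pi) = |E|+|In\cup Out|$.

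The main obstacle is this last step: identifying $\dim\im(\pi)$ with $\mathrm{rank}(M)$ and then with the combinatorial count of Corollary \ref{cor:numpathscycles}. Once the monomial-map Jacobian computation is in place the arithmetic is immediate, but one must be careful that the count in Corollary \ref{cor:numpathscycles} includes the $|V|$ self-cycles $a_{ii}$: these supply the $|V|$ independent diagonal directions which, added to the $|E|-|V|+|In\cup Out|$ independent edge-supported cycles and paths, yield exactly $|E|+|In\cup Out|$. One should also confirm that the factoring of Proposition \ref{prop:factor} genuinely routes every coefficient through $\pi$, including the highest-order path coefficients identified in Lemma \ref{lemma:hot}.
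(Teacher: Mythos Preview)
Your proposal is correct and follows essentially the same route as the paper: factor $c$ through the path/cycle map $\pi$ via Proposition~\ref{prop:factor}, then invoke Corollary~\ref{cor:numpathscycles} to obtain $|E|+|In\cup Out|$ as the number of independent cycles and paths, hence as an upper bound on $\dim\im(c)$. You supply details the paper leaves implicit---the monomial-map Jacobian factorization identifying $\dim\im(\pi)$ with the rank of the indicator matrix, and the concatenation over outputs in the $|In|=1$ case---but the skeleton is identical.
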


\begin{proof} The coefficient map factors through the cycles, self-cycles, and paths from input to output from Proposition \ref{prop:factor}. By Corollary \ref{cor:numpathscycles}, the number of independent paths and cycles is $|E|+|In \cup Out|$.  Thus the dimension of the image of the coefficient map is bounded above by $|E|+|In \cup Out|$.
\end{proof} 

\begin{rmk} We require $|In|=1$ or $|Out|=1$ so that there are either $|Out|-|In \cap Out|$ or $|In|-|In \cap Out|$ distinct input-output pairs, respectively, which equals $|In \cup Out|-1$, the rank of $E(G)B$ in the proof of Lemma \ref{lemma:numpathscycles}.  Example \ref{ex:singleinsingleout} demonstrates this.
\end{rmk}

\begin{figure}[H]
\begin{center}
	\begin{tikzpicture}[scale=1.2]
 	\draw (0,0) circle (0.3);
 	\draw (2,0) circle (0.3);
 	\draw (2,-2) circle (0.3);
 	\draw (0,-2) circle (0.3);
    	\node[] at (0, 0) {1};
    	\node[] at (2, 0) {3};
    	\node[] at (2, -2) {$2$};
    	\node[] at (0, -2) {$4$};
 	 \draw[<-] (2.05,-.35) -- (2.05,-1.65);
 	 \draw[->] (1.95,-.35) -- (1.95,-1.65);
	 \draw[->] (0.35, -1.95) -- (1.65, -1.95);
	 \draw[<-] (0.35, -2.05) -- (1.65, -2.05);
	 \draw[->] (.29,-.21) -- (1.79,-1.71);
	 \draw[<-] (.21,-.29) -- (1.71,-1.79);
   	 \node[] at (2.35, -.8) {$a_{32}$};
	 \node[] at (1.65,-.8) {$a_{23}$};
	 \node[] at (.7,-1.75) {$a_{24}$};
	 \node[] at (.7,-2.25) {$a_{42}$};
	 \node[] at (.6,-1) {$a_{12}$};
	 \node[] at (1,-.6) {$a_{21}$};
	\draw (2.69,.69) circle (0.07);	
	 \draw[-] (2.65, .65 ) -- (2.22, .22);	
	\draw (-.69,-2.69) circle (0.07);	
	 \draw[-] (-.65, -2.65 ) -- (-.22, -2.22);	
	 \draw[->] (-.65, .65) -- (-.25, .25);	
   	 \node[] at (-.8,.8) {in};
	 \draw[->] (2.65, -2.65) -- (2.25, -2.25);	
   	 \node[] at (2.8,-2.8) {in};

	 \draw[->] (-0.35, -2) -- (-.85, -2);	
   	 \node[] at (-.55, -1.85) { $a_{04}$};
	 \draw[->] (2.35, -2) -- (2.85, -2);	
   	 \node[] at (2.6, -1.85) {$a_{02}$};
	 \draw[->] (-0.35, 0) -- (-.85, 0);	
   	 \node[] at (-.55, -0.15) {$a_{01}$};
	 \draw[->] (2.35, 0) -- (2.85, 0);	
   	 \node[] at (2.6, -0.15) {$a_{03}$};

%
%
\draw (-1,-3) rectangle (3, 1);
    	\node[] at (1, -2.75) {$\cm$};
%

\end{tikzpicture}
\hspace*{.5in}
	\begin{tikzpicture}[scale=1.2]
 	\draw (0,0) circle (0.3);
 	\draw (2,0) circle (0.3);
 	\draw (2,-2) circle (0.3);
 	\draw (0,-2) circle (0.3);
    	\node[] at (0, 0) {1};
    	\node[] at (2, 0) {3};
    	\node[] at (2, -2) {$2$};
    	\node[] at (0, -2) {$4$};
 	 \draw[<-] (2.05,-.35) -- (2.05,-1.65);
 	 \draw[->] (1.95,-.35) -- (1.95,-1.65);
	 \draw[->] (0.35, -1.95) -- (1.65, -1.95);
	 \draw[<-] (0.35, -2.05) -- (1.65, -2.05);
	 \draw[->] (0.35, .05) -- (1.65, .05);
	 \draw[<-] (0.35, -.05) -- (1.65, -.05);
 	 \draw[<-] (.05,-.35) -- (.06,-1.65);
 	 \draw[->] (-.05,-.35) -- (-.05,-1.65);

   	 \node[] at (2.35, -1) {$a_{32}$};
	 \node[] at (1.65,-1) {$a_{23}$};
	 \node[] at (1,-1.75) {$a_{24}$};
	 \node[] at (1,-2.25) {$a_{42}$};
	 \node[] at (1,.25) {$a_{31}$};
	 \node[] at (1,-.25) {$a_{13}$};
	 \node[] at (.35,-1) {$a_{14}$};
	 \node[] at (-.35,-1) {$a_{41}$};

	\draw (2.69,.69) circle (0.07);	
	 \draw[-] (2.65, .65 ) -- (2.22, .22);	
	\draw (-.69,-2.69) circle (0.07);	
	 \draw[-] (-.65, -2.65 ) -- (-.22, -2.22);	
	 \draw[->] (-.65, .65) -- (-.25, .25);	
   	 \node[] at (-.8,.8) {in};
	 \draw[->] (2.65, -2.65) -- (2.25, -2.25);	
   	 \node[] at (2.8,-2.8) {in};

	 \draw[->] (-0.35, -2) -- (-.85, -2);	
   	 \node[] at (-.55, -1.85) { $a_{04}$};
	 \draw[->] (2.35, -2) -- (2.85, -2);	
   	 \node[] at (2.6, -1.85) {$a_{02}$};
	 \draw[->] (-0.35, 0) -- (-.85, 0);	
   	 \node[] at (-.55, -0.15) {$a_{01}$};
	 \draw[->] (2.35, 0) -- (2.85, 0);	
   	 \node[] at (2.6, -0.15) {$a_{03}$};

%
%
\draw (-1,-3) rectangle (3, 1);
    	\node[] at (1, -2.75) {$\cm'$};
%

\end{tikzpicture}
\end{center}
\caption{The models $\cm$ and $\cm'$ from Example \ref{ex:singleinsingleout}.}
\end{figure}

\begin{ex} \label{ex:singleinsingleout}
The model $\cm=(G,\{1,2\},\{3,4\},V)$ where $G$ is given by \{ $1 \rightarrow 2, 2 \rightarrow 1, 2 \rightarrow 3, 3 \rightarrow 2, 2 \rightarrow 4, 4 \rightarrow 2$ \} has $|E|+|In \cup Out|=10$ independent paths and cycles but the coefficient map factors over $11$ paths and cycles given by $a_{12}a_{21}$, $a_{23}a_{32}$, $a_{24}a_{42}$, $a_{32}$, $a_{42}$, $a_{21}a_{32}$, $a_{21}a_{42}$, $a_{11}$, $a_{22}$, $a_{33}$, $a_{44}$.  The problem here is that there are only $10$ parameters, but we are attempting to factor over $11$ paths and cycles. This is why we require $|In|=1$ or $|Out|=1$.  However, this is a sufficient condition but not a necessary condition, as the model $(G',\{1,2\},\{3,4\},V)$ where $G'$ is given by \{ $1 \rightarrow 3, 3 \rightarrow 1, 1 \rightarrow 4, 4 \rightarrow 1, 2 \rightarrow 3, 3 \rightarrow 2, 2 \rightarrow 4, 4 \rightarrow 2$ \} has $|E|+|In \cup Out|=12$ independent paths and cycles given by $a_{31}$, $a_{41}$, $a_{32}$, $a_{42}$, $a_{13}a_{31}$, $a_{14}a_{41}$, $a_{23}a_{32}$, $a_{24}a_{42}$, $a_{11}$, $a_{22}$, $a_{33}$, $a_{44}$ and the coefficient map factors over these as well. 
\end{ex}

\begin{rmk} Notice that in Lemma \ref{lemma:dimc}, we assume that $G$ is either strongly input-output connected or strongly connected.  We have just shown that the expected dimension in this case is $|E|+|In \cup Out|$.  But a natural question that arises is, what if we do not assume this connectedness condition on $G$?  Clearly, we still have that the coefficient map factors through cycles and paths.  In this case there will be \textit{at most} $|E|+|In \cup Out|$ independent cycles and paths that appear in the coefficient map, i.e. the coefficient map may factor over fewer than $|E|+|In \cup Out|$ independent paths and cycles. See Section \ref{section:othered}.  
\end{rmk}

This gives us the following theorem:

\begin{thm} \label{thm:idpathcycle} Let $\cm=(G,In,Out,V)$ represent a linear compartmental model with either $G$ strongly input-output connected and $|Out|=1$ or $G$ strongly connected and $|In|=1$.  If the image of the coefficient map has dimension $|E|+|In \cup Out|$, then the model is an identifiable path/cycle model.  
\end{thm}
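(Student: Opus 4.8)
The plan is to verify the two requirements of Definition \ref{defn:pathcycle} separately: that every parameter lies on some cycle or input-to-output path, and that every independent monomial cycle and path is locally identifiable. The first requirement is purely combinatorial. In the first case of the hypothesis $G$ is strongly input-output connected, so by definition every edge is contained in a cycle or in a path from input to output; in the second case $G$ is strongly connected, which (as shown earlier) implies strongly input-output connected, so the same conclusion holds. Since each diagonal parameter $a_{ii}$ is itself a self-cycle, every parameter---edge or self-cycle---is contained in one of the cycles or paths indexing the path/cycle map $\pi$.

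For the second requirement I would exploit the factorization $c = \psi \circ \pi$ supplied by Proposition \ref{prop:factor}. The key is a dimension count. By Corollary \ref{cor:numpathscycles} there are exactly $|E|+|In \cup Out|$ independent directed cycles and input-to-output paths; since $\pi$ is a monomial map whose image dimension equals the rank of the indicator (exponent) vectors of these monomials, the closure $\overline{\im \pi}$ is irreducible of dimension $|E|+|In \cup Out|$. The hypothesis asserts $\dim \overline{\im c} = |E|+|In \cup Out|$ as well. Because $\overline{\im c} = \overline{\psi(\overline{\im \pi})}$ and composition with $\psi$ cannot raise dimension, the restriction $\psi|_{\overline{\im \pi}}$ is a dominant morphism of irreducible varieties of equal dimension, hence generically finite.

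From generic finiteness I would deduce that each coordinate of $\pi$---that is, each monomial cycle or path $a^C$---is locally identifiable from $c$ in the sense of Definition \ref{defn:idfunction}. Generic finiteness makes the induced extension of function fields $\rr(\overline{\im \pi})/\rr(\overline{\im c})$ finite algebraic, so each coordinate function $a^C$ satisfies a polynomial relation whose coefficients are rational in the coordinates of $c$; this relation is exactly a finitely multivalued map $\Phi_C$ with $\Phi_C \circ c = a^C$. Equivalently, at a generic point the fiber of $\psi$ over $c(A)$ meeting $\im \pi$ is finite, so $\pi(A)$---and therefore every $a^C$---is determined by $c(A)$ up to finitely many values. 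Combining the two requirements yields that $\cm$ is an identifiable path/cycle model.

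The main obstacle is precisely this last transition: converting the dimension equality into genuine local identifiability of each individual monomial. One must check that equal image dimension forces generic finiteness (which uses irreducibility of $\overline{\im \pi}$, available because $\pi$ is monomial on an irreducible domain) and that generic finiteness supplies a finitely multivalued inverse for each coordinate matching Definition \ref{defn:idfunction}. A more hands-on route to the same point is the Jacobian criterion: the chain rule gives $Dc = (D\psi)(D\pi)$, and since $\mathrm{rank}\,Dc = \mathrm{rank}\,D\pi = |E|+|In \cup Out|$ at a generic point, $D\psi$ is injective on the tangent space $\im(D\pi)$ to $\overline{\im \pi}$, yielding local injectivity of $\psi$ along $\im \pi$ and hence finite fibers.
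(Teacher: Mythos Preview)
Your proposal is correct and follows essentially the same approach as the paper: factor $c=\psi\circ\pi$ via Proposition~\ref{prop:factor}, use the dimension count from Corollary~\ref{cor:numpathscycles} (equivalently Lemma~\ref{lemma:dimc}) to conclude that $\psi$ is generically finite on $\im\pi$ (the paper simply says ``locally invertible''), and deduce local identifiability of each monomial $a^{C}$. Your write-up is in fact more complete than the paper's, since you explicitly verify the ``every parameter lies on a cycle or input-to-output path'' clause of Definition~\ref{defn:pathcycle} and you justify the equal-dimension $\Rightarrow$ generic-finiteness step carefully (irreducibility of $\overline{\im\pi}$, the function-field argument, and the Jacobian alternative), whereas the paper leaves these points implicit.
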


\begin{proof} By Lemma \ref{lemma:dimc}, the dimension of the image of the coefficient map is bounded above by $|E| + |In \cup Out|$, which is also the number of independent paths and cycles.  Recall a function $f$ is locally identifiable if there is a finitely multivalued function $\phi: \rr^{|E|+|In \cup Out|} \rightarrow \rr$ such that $\phi \circ c = f$.  Let $\pi: \rr^{|E|+|V|} \rightarrow \rr^{|E|+|In \cup Out|}$ be the path/cycle map from Equation \ref{eq:pi}.  Since $c: \rr^{|E|+|V|} \rightarrow \rr^{|E|+|In \cup Out|}$ factors over paths and cycles, then there exists a function $\psi: \rr^{|E|+|In \cup Out|} \rightarrow \rr^{|E|+|In \cup Out|}$ as defined in Equation \ref{eq:psi} such that $c = \psi \circ \pi$.  If the dimension of the image of the coefficient map is precisely $|E|+|In \cup Out|$, then this function $\psi$ is locally invertible with $\psi^{-1}=\phi$ and thus $\pi = \phi \circ c$.  Thus the paths and cycles are identifiable.
\end{proof}

\begin{ex} [Continuation of Example \ref{ex:continuing}] The model $\cm=(G,\{1\},\{2\},V)$ from Example \ref{ex:continuing} can be shown to have dimension of the image of the coefficient map equal to $|E|+|In \cup Out|=5+2=7$.  Thus there are $7$ identifiable paths and cycles given by the monomials $a_{11}, a_{22}, a_{33}, a_{44}, a_{21}, a_{23}a_{32}, a_{34}a_{43}$. 
\end{ex}

\subsection{Necessary condition for number of edges and the edge inequality}

\begin{prop} Let $\cm=(G,In,Out,V)$ represent a linear compartmental model with either $G$ strongly input-output connected and $|Out|=1$ or $G$ strongly connected and $|In|=1$.  If $\cm=(G,In,Out,V)$ is an identifiable path/cycle model, then $|E| + |In \cup Out| \leq$ the expected number of coefficients.  
\end{prop}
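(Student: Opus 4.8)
The plan is to sandwich the quantity $|E| + |In \cup Out|$ between the dimension of the image of the coefficient map (bounded below using the identifiability hypothesis) and the expected number of coefficients (bounded above using Theorem \ref{thm:numcoeffs}). Being an identifiable path/cycle model forces the coefficient map to have a large image, while the number of nonzero coefficients is an immediate upper bound for the dimension of that image; the two connectedness hypotheses are precisely what is needed to invoke the earlier counting results in each of the two cases.

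First I would establish the lower bound. Since $\cm$ is an identifiable path/cycle model, each of the $|E| + |In \cup Out|$ independent monomial cycles and paths from input to output (the count from Corollary \ref{cor:numpathscycles}) is locally identifiable, so by Definition \ref{defn:idfunction} each such monomial is a finitely multivalued function of $c$. Assembling these, the path/cycle map $\pi$ satisfies $\pi = \phi \circ c$ for a finitely multivalued $\phi$, so generically the fibers of $c$ refine those of $\pi$ up to finite multiplicity, giving $\dim(\operatorname{image}\pi) \le \dim(\operatorname{image} c)$. The left-hand side equals $|E| + |In \cup Out|$: the generic rank of the Jacobian of the monomial map $\pi$ equals the rank of the matrix of exponent (indicator) vectors of the monomials $a^C$, and by Corollary \ref{cor:numpathscycles} there are exactly $|E| + |In \cup Out|$ linearly independent such vectors. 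Hence $|E| + |In \cup Out| \le \dim(\operatorname{image} c)$.

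Next I would supply the upper bound and conclude. A coefficient that is identically zero contributes a zero coordinate to $c$, so the image of $c$ lies in the coordinate subspace spanned by the nonzero coefficients, whence $\dim(\operatorname{image} c) \le (\text{number of nonzero coefficients})$. Because $Leak = V$ and the connectedness hypotheses give output connectable in the single-output case, and output connectable to every output in the single-input case, via Proposition \ref{prop:outputreach}, the hypotheses of Theorem \ref{thm:numcoeffs} are met, so the number of nonzero coefficients equals the expected number of coefficients. Chaining the inequalities yields $|E| + |In \cup Out| \le \dim(\operatorname{image} c) \le (\text{expected number of coefficients})$, as claimed.

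The main obstacle, and really the only nonroutine point, is justifying that $\dim(\operatorname{image}\pi)$ equals the combinatorial count $|E| + |In \cup Out|$ of Corollary \ref{cor:numpathscycles}. That corollary counts independent paths and cycles as the rank of their exponent vectors, a purely linear-algebraic statement, whereas $\dim(\operatorname{image}\pi)$ is the transcendence degree of the monomials $a^C$ over $\rr$. The bridge is the standard fact that for a monomial map the generic Jacobian rank equals the rank of the exponent matrix (via logarithmic differentiation), equivalently that monomials with linearly independent exponent vectors are algebraically independent; this makes the two counts coincide. A secondary point to handle carefully is the passage from ``each independent path/cycle is locally identifiable'' to the single factorization $\pi = \phi \circ c$ and then to the dimension inequality, where Definition \ref{defn:idfunction} and the genericity built into local identifiability are used.
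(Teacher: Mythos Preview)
Your proof is correct and follows the same sandwich strategy as the paper: bound $\dim(\operatorname{image} c)$ above by the number of nonzero coefficients (which is the expected number by Theorem~\ref{thm:numcoeffs}) and below by $|E|+|In\cup Out|$ using the identifiable path/cycle hypothesis. The paper's own proof is a terse two-line version that leaves the lower bound $|E|+|In\cup Out|\le \dim(\operatorname{image} c)$ entirely implicit, whereas you supply the details via $\pi=\phi\circ c$ and the exponent-matrix/Jacobian argument for monomial maps; this extra care is reasonable and not a genuine departure from the paper's approach.
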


\begin{proof} We have that the dimension of the image of the coefficient map is bounded above by the expected number of coefficients, as it cannot exceed the number of coefficients.  Thus $|E|+|In \cup Out| \leq $ the expected number of coefficients.
\end{proof}

\begin{defn} [Edge inequality] We say that a model has a number of edges given by the \textit{edge inequality} if the number of edges $|E|$ satisfies $|E|+|In \cup Out| \leq$ the expected number of coefficients.
\end{defn}

We can also show that the property of being strongly input-output connected is a necessary condition for having expected dimension in the case of maximal number of edges.

\begin{prop} \label{prop:maxedgesneccond} Let $\cm=(G,In,Out,V)$ with $G$ output connectable and $|Out|=1$ represent a linear compartmental model for which the
edge inequality is an equality with expected dimension $|E|+|In \cup Out|$.  Then the graph must be strongly input-output connected.
\end{prop}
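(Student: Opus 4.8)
The plan is to argue by contradiction, showing that failure of strong input-output connectedness forces the coefficient map to drop dimension. Throughout I use that $G$ is output connectable with $|Out|=1$, so every vertex---in particular every input---has a directed path to the output $j$; this makes $G$ connected and all distances $\dist(i_t,j)$ finite. Since the number of edges is maximal for the edge formula, that inequality is an equality: $|E|+|In\cup Out|$ equals the expected number of coefficients, which by Theorem \ref{thm:numcoeffs} is the actual number $k$ (leaks sit at every compartment). Combined with expected dimension, the hypothesis reads: the dimension of the image of $c$ equals $k=|E|+|In\cup Out|$. Now let $S$ be the set of vertices not reachable from any input and $T=V\setminus S$. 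If $S=\varnothing$, then every edge $u\to v$ lies on a path $\text{(input)}\rightsquigarrow u\to v\rightsquigarrow j$, so $G$ is already strongly input-output connected; hence I may assume $a:=|S|\ge 1$, and since inputs reach $j$, all inputs and the output lie in $T$ (write $b:=|T|$).

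The key structural step is that there are no edges from $T$ to $S$: an edge $w\to x$ with $w\in T$ would make $x$ reachable from an input through $w$, forcing $x\in T$. Ordering the vertices of $S$ before those of $T$, the compartmental matrix is block lower triangular, $A(G)=\left(\begin{smallmatrix} M_S & 0\\ X & M_T\end{smallmatrix}\right)$, where $M_S$ involves only the $a$ parameters attached to $S$ (its edges and reparametrized diagonals) and $M_T$ only those attached to $T$, while the $S\to T$ edges sit in $X$. Hence $\det(\partial I-A)=\det(\partial I-M_S)\det(\partial I-M_T)$, and since the input $i$ and output $j$ both lie in $T$, deleting row $i$ and column $j$ preserves the block-triangular shape, so $\det(\partial I-A)_{ij}=\det(\partial I-M_S)\,\det\!\big((\partial I-M_T)_{ij}\big)$. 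Substituting into the input-output equation of Theorem \ref{thm:ioscc} shows that $\det(\partial I-M_S)$ is a common factor of both sides.

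It follows that the coefficient vector $c$ is the convolution of the coefficient vector $\alpha$ of the monic polynomial $\det(\partial I-M_S)$, which has $a$ entries and depends only on the $S$-parameters, with the coefficient vector $\beta$ of the input-output equation of the submodel $(G_T,In,\{j\},T)$, which depends only on the $T$-parameters. As these parameter sets are disjoint, the dimension of the image of $c$ is at most $\dim(\operatorname{im}\alpha)+\dim(\operatorname{im}\beta)\le a+(\text{number of }T\text{-coefficients})$. The induced graph $G_T$ is again output connectable with leaks on all of $T$, and because no edge leaves $T$ for $S$ every input-output path stays in $T$, so the distances $\dist(i_t,j)$ are unchanged; thus Theorem \ref{thm:numcoeffs} gives exactly $b+nb-D+m(b-1)$ coefficients for the $T$-submodel, where $D=\sum_t\dist(i_t,j)$, $n=|In-Out|$, $m=|In\cap Out|$. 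Subtracting from $k=|V|+n|V|-D+m(|V|-1)$ with $|V|=a+b$ yields $k-\big(a+(\text{number of }T\text{-coefficients})\big)=a(n+m)=a|In|\ge 1$. Therefore the dimension of the image of $c$ is at most $k-a|In|<k=|E|+|In\cup Out|$, contradicting the expected-dimension hypothesis; hence $G$ must be strongly input-output connected.

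The main obstacle is the block-triangular factorization of the input-output equation---especially checking that the cofactors $\det(\partial I-A)_{ij}$ retain the factor $\det(\partial I-M_S)$---together with the superadditivity bound on the dimension of the image of $c$, which uses that the $S$- and $T$-parameters are genuinely disjoint after the leak reparametrization (so the bridging rates never reappear in either diagonal block). Once these are in place, everything else is the bookkeeping reduction $k-(a+\#T)=a|In|$ via Theorem \ref{thm:numcoeffs}.
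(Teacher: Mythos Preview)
Your proof is correct, but it takes a substantially different route from the paper's.

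The paper's argument is a two-line appeal to Proposition~\ref{prop:factor}: since $c$ factors through the path/cycle map, any edge that fails to lie in a directed cycle or input-to-output path contributes a parameter that is invisible to $c$; hence $c$ factors through fewer than $|E|+|In\cup Out|$ independent paths and cycles, contradicting the assumed dimension. Your approach bypasses the path/cycle factorization entirely and instead partitions the vertices into the input-reachable set $T$ and its complement $S$, exploits the resulting block-triangular form of $\partial I-A$ to factor \emph{every} term of the input-output equation through $\det(\partial I-M_S)$, and then bounds $\dim\operatorname{im}c$ by a coefficient count on the $T$-submodel via Theorem~\ref{thm:numcoeffs}. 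This is longer but more self-contained and yields the quantitative defect $k-\dim\operatorname{im}c\ge |S|\cdot|In|$, whereas the paper's argument only asserts the dimension drops. Your approach also makes transparent \emph{which} parameters disappear (the $S\to T$ edges sitting in the off-diagonal block $X$), which is exactly the content of the paper's ``parameters that do not appear'' claim.

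One small point: your sentence ``every edge $u\to v$ lies on a path $\text{(input)}\rightsquigarrow u\to v\rightsquigarrow j$'' in the $S=\varnothing$ case produces a \emph{walk}, not necessarily a simple path containing $u\to v$. The conclusion is still correct---if the walk revisits a vertex one extracts either a simple input-output path through $u\to v$ or a cycle through $u\to v$---but it is worth saying so, since ``strongly input-output connected'' requires the edge to sit in a genuine cycle or path.
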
 

\begin{proof}
Assume the coefficient map has expected dimension $|E|+|In \cup Out|$ with the maximal number of edges.  This means the expected dimension is the number of coefficients.  If the graph is not connected with every edge in a cycle or path from input to output, then there are parameters that do not appear in the coefficient map, and thus the coefficient map factors over fewer than $|E|+|In \cup Out|$ independent paths and cycles.  But this contradicts having expected dimension $|E|+|In \cup Out|$.
\end{proof} 

\begin{rmk} If there are fewer edges, we can still achieve expected dimension without this condition of strongly input-output connected. In other words, being strongly input-output connected is a sufficient but not necessary condition to achieve $|E|+|In \cup Out|$ independent cycles and paths in the coefficient map.  See Section \ref{section:othered}.
\end{rmk}

\subsection{Obtaining identifiability by removing leaks}

In this section, we show that removing all leaks except leaks from input/output compartments results in identifiability, much like the results in \cite{MeshkatSullivantEisenberg}.  We will follow the same proof.  Recall that for an identifiable path/cycle model, we have the coefficient map $c:\rr^{|V|+|E|} \rightarrow \rr^k$.  Let $\pi:\rr^{|V|+|E|} \rightarrow \rr^{|E|+|In \cup Out|}$ be the path/cycle map from Equation \ref{eq:pi}, that is $\pi(A(G))=(a^P : P$ is a cycle or path from input to output of $G)$.  Then Proposition \ref{prop:factor} tells us that $c$ factors through $\pi$ without loss of dimension.  Thus $c = \psi \circ \pi$ where $\psi$ is defined as in Equation \ref{eq:psi} and the dimension of the image of $c$ equals the dimension of the image of $\pi$.

Passing from a model $(G,In,Out,V)$ to a model $(G,In,Out,Leak)$ such
that $|Leak|=|In \cup Out|$
amounts to restricting the parameter space $\rr^{|V| + |E|}$
to a linear subspace $\Lambda \subseteq  \rr^{|V| + |E|}$ of dimension $|E| + |Leak|$
and we would like the image of $\Lambda$ under the coefficient map $c$
to have dimension $|E| + |Leak|$.  Since  $c$ factors through the path/cycle map $\pi$ it suffices to prove
that the image of $\Lambda$ under $\pi$ has dimension $|E| + |Leak|$.

\begin{lemma} \label {lemma:algindset} Let $G = (V,E)$ be a directed graph with corresponding identifiable path/cycle model $(G,In,Out,V)$. Assume that either $G$ is strongly input-output connected and $|Out|=1$ or $G$ is strongly connected and $|In|=1$. Consider a model $(G,In,Out,L)$ where $In \cup Out \subseteq L$.  Let $\pi : \rr^{|V|+|E|} \rightarrow \rr^{|E|+|In \cup Out|}$ denote the path/cycle map. 
Let $\Lambda \subseteq \rr^{|V| + |E|}$ be the linear space satisfying
$$
\Lambda = \{ \mathcal{A} \in \rr^{|V| + |E|}: a_{ii} =  - \sum_{j,j \neq i} a_{ji} \mbox{ for all }
i \notin L \}.
$$
Then the dimension of the image of $\Lambda$ under the map $\pi$ is $|E| + |In \cup Out|$. 
\end{lemma}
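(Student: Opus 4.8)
The plan is to read the dimension of the image of $\Lambda$ under $\pi$ as the rank of the Jacobian of $\pi|_\Lambda$ at a generic point of $\Lambda$, and to show this rank equals $|E|+|In\cup Out|$. The upper bound is immediate: $\pi(\Lambda)\subseteq\pi(\rr^{|V|+|E|})$, and by Theorem \ref{thm:idpathcycle} the latter image has dimension $|E|+|In\cup Out|$. So the entire content is the matching lower bound on the Jacobian rank.

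First I would parametrize $\Lambda$ by the $|E|$ edge parameters together with the $|L|$ diagonal parameters $a_{ii}$ for $i\in L$; for $i\notin L$ the entry $a_{ii}=-\sum_{j\neq i}a_{ji}$ is a linear function of the edge parameters. The only path/cycle monomials affected by this substitution are the self-cycles at non-leak vertices, since every genuine cycle and every path from input to output involves only off-diagonal (edge) parameters. Writing the Jacobian with rows indexed by the path/cycle monomials, the self-cycle rows for $i\in L$ form an identity block against the free columns $a_{ii}$ and vanish in every edge column, so the Jacobian is block triangular. Hence its rank is $|L|$ plus the rank of the submatrix $J'$ whose rows are the genuine cycles, the paths, and the self-cycles at $i\notin L$, and whose columns are the edge parameters. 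Setting $L_0=L\setminus(In\cup Out)$, so that $|L|=|In\cup Out|+|L_0|$, it remains to prove $\operatorname{rank}J'=|E|-|L_0|$, i.e.\ that $J'$ has corank exactly $|L_0|$.

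The corank of $J'$ is the dimension of the space of $w\in\rr^{E}$ annihilating every row. Dividing out the generic values $a_e$ by setting $v_e=w_e/a_e$, the rows from genuine cycles and paths say exactly that $\sum_{e\in C}v_e=0$ and $\sum_{e\in P}v_e=0$, i.e.\ that $v$ is orthogonal to the span of the indicator vectors of the directed cycles of length $\ge 2$ and the directed paths; by Corollary \ref{cor:numpathscycles} (discounting the $|V|$ self-cycles) this span has dimension $|E|-|V|+|In\cup Out|$, so such $v$ range over a space of dimension $|V|-|In\cup Out|$. That orthogonal complement is cut out by potentials: $v_e=\phi_{\operatorname{tail}(e)}-\phi_{\operatorname{head}(e)}$, and telescoping along a directed path from an input to an output converts the path conditions into $\phi_{\mathrm{in}}=\phi_{\mathrm{out}}$. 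Under the hypotheses (output connectable for $|Out|=1$, strongly connected for $|In|=1$), every input and output is forced to share one potential value, which recovers the same dimension. The remaining rows, from the self-cycles at $i\notin L$, require that the entries of $w$ on the edges leaving $i$ sum to zero; in the potential variables this reads $\sum_{j}a_{ji}(\phi_i-\phi_j)=(\mathcal{L}\phi)_i=0$, where $\mathcal{L}$ is the weighted out-Laplacian with $\mathcal{L}_{ii}=\sum_j a_{ji}$ and $\mathcal{L}_{ik}=-a_{ki}$ (not to be confused with the leak set $L$).

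The hard part is to show that these last $|V|-|L|$ conditions are independent on the $(|V|-|In\cup Out|)$-dimensional potential space, so that the corank of $J'$ drops to exactly $|L_0|$. Normalizing $\phi\equiv 0$ on $In\cup Out$ identifies the potential space with $\rr^{V\setminus(In\cup Out)}$, and independence of the functionals $(\mathcal{L}\phi)_i$ for $i\notin L$ becomes nonsingularity of the principal submatrix $\mathcal{L}[V\setminus L,\,V\setminus L]$ for generic $a$. By the weighted all-minors matrix-tree theorem this determinant is a sum over spanning forests in which each component is an arborescence directed toward a root in $L$; distinct forests contribute distinct monomials in the $a_e$, so there is no cancellation, and at least one such forest exists precisely because output connectability (resp.\ strong connectivity) guarantees that every vertex has a directed path to $Out$ (resp.\ to $In$), with $In\cup Out\subseteq L$. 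Hence the determinant is a nonzero polynomial in the $a_e$ and so is nonzero at a generic point, giving $\operatorname{corank}J'=|L_0|$ and therefore $\operatorname{rank}J=|L|+(|E|-|L_0|)=|E|+|In\cup Out|$, as desired.
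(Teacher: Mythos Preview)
Your proof is correct and follows the same high-level architecture as the paper's: parametrize $\Lambda$, block-decompose the Jacobian by separating the free self-cycles at leak vertices, and then show the remaining block has the right rank by proving that the non-leak self-cycle rows are independent modulo the cycle/path rows. Where you diverge is in the execution of this last step. The paper specializes every edge weight to $1$, so that the cycle/path rows become $0/1$ indicator vectors and the non-leak rows become (negated) out-neighbourhood indicators; it then argues, by a somewhat informal flow-chasing (``continuing in this way''), that no nontrivial combination of the $b_i$ can satisfy the balanced indegree/outdegree condition characterizing the cycle/path span. You instead work at a generic point, pass to potential coordinates $v_e=\phi_{\mathrm{tail}(e)}-\phi_{\mathrm{head}(e)}$, and reduce the independence of the non-leak constraints to the nonvanishing of the principal minor $\mathcal{L}[V\setminus L,\,V\setminus L]$ of the weighted out-Laplacian, which you certify via the all-minors matrix-tree theorem together with the existence of a spanning in-forest rooted in $L$ (guaranteed since $Out\subseteq L$ and every vertex reaches $Out$). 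Your route gives a cleaner, fully rigorous justification of the key rank statement; the paper's route is more elementary in that it avoids invoking matrix-tree, but its flow argument is really the same Laplacian-minor fact in disguise and is left at a sketch level. One small expository remark: your sentence ``That orthogonal complement is cut out by potentials'' is doing two things at once---first passing to potentials via orthogonality to the (directed) cycle space, then imposing $\phi|_{In\cup Out}$ constant via the path conditions---and it would read more smoothly to separate them and note explicitly that equality with the orthogonal complement follows because both subspaces have dimension $|V|-|In\cup Out|$.
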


\begin{proof} Since $\Lambda$ is a linear space, we just consider the natural map from $\rr^{|E|+|L|} \to \rr^{|E|+|In \cup Out|}$ which maps to the path/cycle space.  To show that the dimension of the image of this map is correct, we consider the Jacobian of this map and show that it has full rank.

Note that the rows corresponding to the $|L|$ self-cycles are linearly independent so we focus on the $|E| + |In \cup Out| - |L|$ by $|E|$ submatrix ignoring those rows and columns, which we will call $J$.  Arrange the matrix so that the first $|E|-|V|+|In \cup Out|$ rows correspond to the paths and cycles of $G$ and the last $|V|-|L|$ rows correspond to the non-leak diagonal elements.  Let the first $|E|-|V|+|In \cup Out|$ rows be called $A$ and the last $|V|-|L|$ rows be called $B$.  Clearly the rows of $A$ are linearly independent by Lemma \ref{lemma:dimc}.  The rows of $B$ are linearly independent since they are in triangular form since each involves distinct parameters.  
To show that the full set of $|E| + |In \cup Out| - |L|$ rows are linearly independent, we need only show that the row space of $A$ and the row space of $B$ intersect only in the origin.

To prove that $J$ generically has maximal possible rank, it is enough to show that there is some point where the evaluation of $J$ at said point yields the maximal rank.  We choose the point where we set all the edge parameters $a_{ij}=1$ for all $j\to i \in E$.  This specialization yields that the row space of $A$ is exactly the path/cycle space of the graph $G$, i.e. all of the weightings on the edges of the graph where the indegree equals the outdegree of every vertex in a cycle and every vertex except the first and last in a path from input to output.  Also, we have that the matrix $B$ which has dimension $(|V|-|L|) \times (|E|)$, which consists of the rows corresponding to the vertices in $V\setminus L$, and the (negated) row corresponding to vertex $i$ has a one for an edge $i' \to j'$ if and only if $i=i'$, with all other entries zero.

Since $A$ spans the path/cycle space of $G$, each element in the row space of $A$ corresponds to a weighting on the edges of $G$ where the total weight of all incoming edges at a vertex $i$ equals the total weight of all outgoing edges at vertex $i$ except at input or output vertices $In \cup Out$.  On the other hand, we claim that the only vector in the row span of $B$ with the same property is the zero vector.  To show this, let $b_i$ be the row vector associated to some vertex $i$. Note that a vector in the row span of $B$ will have zero weight on any of the outgoing edges of vertices in $In \cup Out$.  

In order for the indegree to equal the outdegree, we would need to include a $b_j$ with an edge pointing toward vertex $i$.  Continuing in this way, we can only stop when we have included an input or output vertex since the indegree need not equal the outdegree for those vertices.  However, this contradicts the fact that a vector in the row span of $B$ will have zero weight on any of the outgoing edges of vertices in $In \cup Out$. 
\end{proof}

\begin{thm} [Removing Leaks] \label{thm:removeleaks} Let $\cm=(G,In,Out,V)$ represent a linear compartmental model. Assume that either $G$ is strongly input-output connected and $|Out|=1$ or $G$ is strongly connected and $|In|=1$. Assume it is an identifiable path/cycle model. Then, the corresponding model $\wcm=(G,In,Out,L)$ where $In \cup Out \subseteq L$ for any such $L$ has expected dimension. In particular, if $L = In \cup Out$, then $\wcm$ is locally identifiable.
\end{thm}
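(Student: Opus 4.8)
The plan is to reduce everything to Lemma \ref{lemma:algindset} by exploiting the factorization of the coefficient map through the path/cycle map. Passing from $\cm=(G,In,Out,V)$ to $\wcm=(G,In,Out,L)$ with $In\cup Out\subseteq L$ amounts to restricting the parameter space $\rr^{|V|+|E|}$ to the linear subspace $\Lambda=\{A\in\rr^{|V|+|E|}: a_{ii}=-\sum_{j,j\neq i}a_{ji} \text{ for all } i\notin L\}$ of dimension $|E|+|L|$. By Proposition \ref{prop:factor} the coefficient map factors as $c=\psi\circ\pi$, where $\pi$ is the path/cycle map. Because $\cm=(G,In,Out,V)$ is an identifiable path/cycle model with image of $c$ of dimension $|E|+|In\cup Out|$, the argument in the proof of Theorem \ref{thm:idpathcycle} shows that $\psi$ is locally invertible on the path/cycle space.

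Next I would track dimensions under this restriction. Since $\psi$ is generically a local diffeomorphism on the image of $\pi$, it preserves the dimension of images of subsets; in particular $\dim c(\Lambda)=\dim\psi(\pi(\Lambda))=\dim\pi(\Lambda)$. By Lemma \ref{lemma:algindset} the dimension of $\pi(\Lambda)$ is exactly $|E|+|In\cup Out|$. This is the maximal possible value: since $c$ factors through $\pi$ and the image of $\pi$ has dimension at most $|E|+|In\cup Out|$ by Lemma \ref{lemma:dimc}, the dimension of the image of $c$ on any subspace of the parameter space is bounded above by $|E|+|In\cup Out|$, and because $In\cup Out\subseteq L$ gives $|E|+|L|\geq|E|+|In\cup Out|$, this bound is attained rather than cut off by the ambient dimension. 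Hence $\wcm$ has expected dimension.

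Finally, for the special case $L=In\cup Out$, the parameter space of $\wcm$ has dimension $|E|+|L|=|E|+|In\cup Out|$, which coincides with the dimension of the image just computed. Thus the Jacobian of $c$ restricted to $\Lambda$ attains full rank $|E|+|L|$ at a generic point, and by Proposition 2 of \cite{MeshkatSullivantEisenberg} the model $\wcm$ is generically locally identifiable. The substance of the argument is carried entirely by Lemma \ref{lemma:algindset}; the only delicate point I expect is verifying that the local invertibility of $\psi$ on the full path/cycle space genuinely descends to the restricted image $\pi(\Lambda)$, so that no dimension is lost in passing from $\pi$ to $c$ on $\Lambda$.
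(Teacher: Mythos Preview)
Your proposal is correct and follows essentially the same approach as the paper: both reduce the claim to Lemma \ref{lemma:algindset} via the factorization $c=\psi\circ\pi$, use the identifiable path/cycle hypothesis to ensure $\psi$ does not lose dimension, and then observe that $|E|+|In\cup Out|$ parameters match the image dimension when $L=In\cup Out$. Your write-up is in fact more explicit than the paper's (which compresses the $\psi$-preserves-dimension step into the phrase ``this must be the same for the restricted model''), and your closing caveat about $\psi$ descending to $\pi(\Lambda)$ is not a genuine obstacle since $\pi(\Lambda)$ already has full dimension $|E|+|In\cup Out|$ inside the path/cycle space.
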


\begin{proof}
By Lemma \ref{lemma:algindset} and the comments preceding it
we know that the image of the restricted parameter space
under the path/cycle map $\pi$ has dimension $|E| + |In \cup Out|$, which 
is equal to the dimension of the image of the full parameter
space under the path/cycle map.  Since, for an identifiable path/cycle model, the
dimension of the image of the coefficient map $c$ is $|E| + |In \cup Out|$,
this must be the same for the restricted model. In particular, if $|L|=|In \cup Out|$, then the model
has $|E| + |In \cup Out|$ parameters, hence it is locally identifiable.
\end{proof}

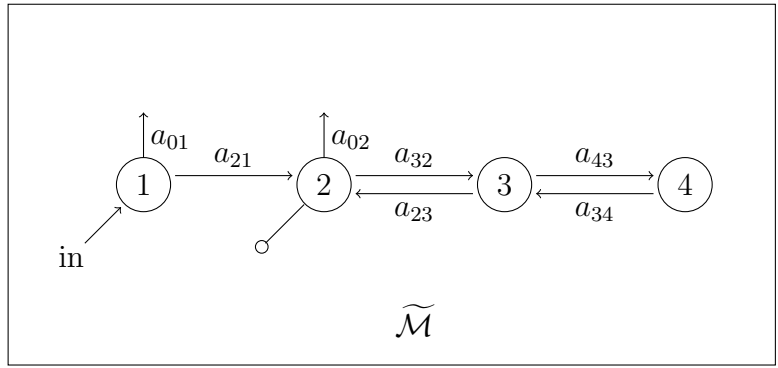
\begin{figure}[H]
\begin{center}
	\begin{tikzpicture}[scale=1.2]
 	\draw (0,0) circle (0.3);
 	\draw (2,0) circle (0.3);
 	\draw (4,0) circle (0.3);
 	\draw (6,0) circle (0.3);
    	\node[] at (0, 0) {1};
    	\node[] at (2, 0) {2};
    	\node[] at (4, 0) {$3$};
    	\node[] at (6, 0) {$4$};
	 \draw[->] (0.35, .1) -- (1.65, .1);
	 \draw[->] (2.35,.1) -- (3.65,.1);
	 \draw[<-] (2.35,-.1) -- (3.65,-.1);
	 \draw[->] (4.35,.1) -- (5.65,.1);
	 \draw[<-] (4.35,-.1) -- (5.65,-.1);
   	 \node[] at (1, 0.3) {$a_{21}$};
	\node[] at (3,0.3) {$a_{32}$};
	\node[] at (3,-.3) {$a_{23}$};
	\node[] at (5,.3) {$a_{43}$};
	\node[] at (5,-.3) {$a_{34}$};
	\draw (1.31,-.69) circle (0.07);	
	 \draw[-] (1.35, -.65 ) -- (1.78, -.22);	
	 \draw[->] (-.65, -.65) -- (-.25, -.25);	
   	 \node[] at (-.8,-.8) {in};
	 \draw[->] (0,.3) -- (0, .8);	
   	 \node[] at (.3, .5) {$a_{01}$};
	 \draw[->] (2,.3) -- (2, .8);	
   	 \node[] at (2.3, .5) {$a_{02}$};
\draw (-1.5,-2) rectangle (7, 2);
    	\node[] at (3, -1.5) {$\widetilde{\cm}$};
    	

\end{tikzpicture}
\end{center}
\caption{Graph for model corresponding to $\widetilde{\cm}$ in Example \ref{ex:removecontinuing}.}
\end{figure}

\begin{ex} [Continuation of Example \ref{ex:continuing}] \label{ex:removecontinuing} The model $\wcm=(G,\{1\},\{2\},\{1,2\})$ obtained from the model in Example \ref{ex:continuing} by removing two leaks and leaving the leaks in the input and output compartments is locally identifiable. 
\end{ex}

\begin{rmk} We note that, while $L=In \cup Out$ is sufficient in Theorem \ref{thm:removeleaks}, it is certainly not necessary, as there are other possible configurations of $|L|=|In \cup Out|$ leaks that also result in identifiability. The next example demonstrates this.
\end{rmk}

\begin{figure}[H]
\begin{center}
	\begin{tikzpicture}[scale=1.2]
 	\draw (0,0) circle (0.3);
 	\draw (2,0) circle (0.3);
 	\draw (2,-2) circle (0.3);
 	\draw (0,-2) circle (0.3);
    	\node[] at (0, 0) {1};
    	\node[] at (2, 0) {2};
    	\node[] at (2, -2) {$3$};
    	\node[] at (0, -2) {$4$};
	 \draw[->] (0.35, 0) -- (1.65, 0);
 	 \draw[<-] (.3,-.2) -- (1.8,-1.7);
 	 \draw[->] (.2,-.3) -- (1.7,-1.8);
 	 \draw[<-] (0.07,-.35) -- (0.07,-1.65);
 	 \draw[->] (-0.07,-.35) -- (-0.07,-1.65);
   	 \node[] at (1, 0.15) {$a_{21}$};
	\node[] at (.34,-1) {$a_{14}$};
	\node[] at (-.33,-1) {$a_{41}$};
	\node[] at (1.5,-1) {$a_{13}$};
	\node[] at (1,-1.5) {$a_{31}$};
	\draw (2.69,.69) circle (0.07);	
	 \draw[-] (2.65, .65 ) -- (2.22, .22);	
	 \draw[->] (-.65, .65) -- (-.25, .25);	
   	 \node[] at (-.8,.8) {in};
	 \draw[->] (-0.35, -2) -- (-.85, -2);	
   	 \node[] at (-.55, -2.15) {$a_{04}$};
	 \draw[->] (2.35, 0) -- (2.85, 0);	
   	 \node[] at (2.6, -.15) {$a_{02}$};

\draw (-1,-3) rectangle (3, 1);
    	\node[] at (1, -2.7) {$\cm$ for $L =\{2,4\}$};
%

\end{tikzpicture}
\hspace*{.3in} 
\begin{tikzpicture}[scale=1.2]
 	\draw (0,0) circle (0.3);
 	\draw (2,0) circle (0.3);
 	\draw (2,-2) circle (0.3);
 	\draw (0,-2) circle (0.3);
    	\node[] at (0, 0) {1};
    	\node[] at (2, 0) {2};
    	\node[] at (2, -2) {$3$};
    	\node[] at (0, -2) {$4$};
	 \draw[->] (0.35, 0) -- (1.65, 0);
 	 \draw[<-] (.3,-.2) -- (1.8,-1.7);
 	 \draw[->] (.2,-.3) -- (1.7,-1.8);
 	 \draw[<-] (0,-.35) -- (0,-1.65);
 	 \draw[<-] (.25,-1.75) -- (1.75,-.25); 
   	 \node[] at (1, 0.15) {$a_{21}$};
	\node[] at (-.3,-1) {$a_{14}$};
	\node[] at (1.8,-1.3) {$a_{13}$};
	\node[] at (1.3,-1.8) {$a_{31}$};
	\node[] at (1.8,-.7) {$a_{42}$};
	\draw (2.69,.69) circle (0.07);	
	 \draw[-] (2.65, .65 ) -- (2.22, .22);	
	 \draw[->] (-.65, .65) -- (-.25, .25);	
   	 \node[] at (-.8,.8) {in};
	 \draw[->] (-0.35, -2) -- (-.85, -2);	
   	 \node[] at (-.55, -2.15) {$a_{04}$};
	 \draw[->] (2.35, -2) -- (2.85, -2);	
   	 \node[] at (2.6, -2.15) {$a_{03}$};

\draw (-1,-3) rectangle (3, 1);
    	\node[] at (1, -2.7) {$\cm'$ for $L =\{3,4\}$};
%

\end{tikzpicture}
\end{center}
\caption{On the left is the graph corresponding to $\cm$ with leak set $L=\{2,4\}$, and on the right is the graph corresponding to $\cm'$ with leak set $L=\{3,4\}$ from Example \ref{ex:leakplacement}.}
\end{figure}
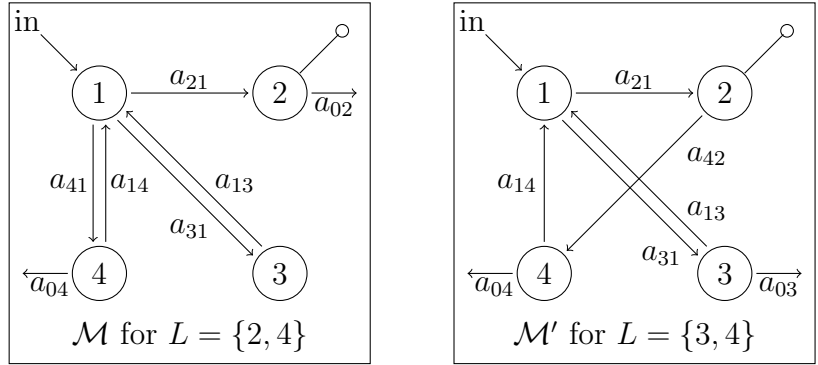

\begin{ex} \label{ex:leakplacement} Consider the model $\cm=(G,\{1\},\{2\},L)$ where $|L|=|In \cup Out|=2$ and $G$ is given by the edges \{ $1 \rightarrow 2, 1 \rightarrow 3, 3 \rightarrow 1, 1 \rightarrow 4, 4 \rightarrow 1$ \}.  The identifiable models are the ones where $L = \{2,4\}, \{2,3\},\{1,2\}$ and the unidentifiable models have $L=\{3,4\},\{1,4\},\{1,3\}$.  

We note that while $2 \in L$ appears to be sufficient for identifiability in this model, we can consider another model given by $\cm'=(G',\{1\},\{2\},L)$ where $|L|=|In \cup Out|=2$ and $G'$ is given by the edges \{ $1 \rightarrow 2, 3 \rightarrow 1, 4 \rightarrow 1, 1 \rightarrow 3, 2 \rightarrow 4$ \}.  The identifiable models are the ones where $L = \{3,4\}, \{2,3\},\{1,4\}, \{1,2\}$ and the unidentifiable models have $L=\{2,4\},\{1,3\}$. This shows the pattern of identifiability depends on the graph structure itself and not just the placement of inputs and outputs. 
\end{ex}

\subsection{Sufficient condition for identifiable path/cycle model} 

We now give a sufficient condition for a model to be an identifiable path/cycle model with 1 input and 1 output.  This sufficient condition is analogous to the sufficient condition from \cite{MeshkatSullivant} of inductively strongly connected for models with input and output in the same compartment.  In fact, Theorem \ref{thm:almost_isc} reduces to Theorem 5.13 of \cite{MeshkatSullivant} if the input and output compartments are the same.

\begin{thm} \label{thm:almost_isc} Let $\cm=(G,\{i\},\{j\},V)$ represent a linear compartmental model with $G$ strongly input-output connected and $|E| = 2|V|-(\dist(i,j)+2)$.  If $\cm=(G,\{i\},\{j\},V)$ has no path from compartment $j$ to compartment $i$ but becomes inductively strongly connected if an edge from compartment $j$ to compartment $i$ is added, then $\cm$ is an identifiable path/cycle model.  
\end{thm}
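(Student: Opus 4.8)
\emph{Plan.} The strategy is to apply Theorem~\ref{thm:idpathcycle}. Since $|In|=|Out|=1$ and $G$ is strongly input-output connected, it suffices to prove that the image of the coefficient map has dimension $|E|+|In\cup Out|=|E|+2$; here $i\neq j$ (there is no path $j\to i$), so $|In\cup Out|=2$. The strongly-input-output-connected hypothesis already guarantees that every parameter lies on some cycle or $i$-to-$j$ path, so the second clause of Definition~\ref{defn:pathcycle} is automatic, and only the dimension count needs work.

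The key device is the auxiliary graph $G'=G+(j\to i)$, which has $|E|+1$ edges and by hypothesis is inductively strongly connected; let $\tilde A$ be its compartmental matrix, carrying the new parameter $a_{ij}$ in entry $(i,j)$. First I would record the bijection between monomial cycles of $G'$ and the paths/cycles of $\cm$: a cycle of $G'$ avoiding the edge $j\to i$ is a cycle of $G$, whereas a cycle using $j\to i$ consists of that edge together with a directed path from $i$ to $j$ in $G$, i.e.\ a monomial path of $\cm$ times $a_{ij}$. Because $a_{ij}$ occupies a single entry of $\tilde A$ and any collection of vertex-disjoint cycles uses it at most once, $\det(\partial I-\tilde A)$ is affine-linear in $a_{ij}$: its $a_{ij}$-constant part equals $\det(\partial I-A)$ (the left-hand coefficients of Theorem~\ref{thm:ioscc}) and, by the computation in the proof of Proposition~\ref{prop:factor}, its $a_{ij}$-linear part equals $\pm\det(\partial I-A)_{ij}$ (the right-hand coefficients). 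Hence the coefficient vector of $\cm$ carries exactly the information of the full characteristic polynomial of $\tilde A$, and local identifiability of the paths/cycles of $\cm$ becomes equivalent to local identifiability of the monomial cycles of the inductively strongly connected graph $G'$.

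I would then run the inductively-strongly-connected argument of Theorem~5.13 of~\cite{MeshkatSullivant} on $G'$. Fix an ordering $1,2,\ldots,n$ of $V$ for which every $G'_{\{1,\ldots,k\}}$ is strongly connected, and take the transcendence basis given by the $|V|$ self-cycles together with the $(|E|+1)-|V|+1$ independent directed cycles of $G'$, a total of $|E|+2$ monomials by Proposition~\ref{prop:kernelincidence}. The equality $|E|=2|V|-(\dist(i,j)+2)$ forces $G'$ to have exactly $2|V|-2$ edges, hence to be a \emph{minimal} inductively strongly connected graph: exactly two edges, and so one new independent directed cycle $C_k$, are closed off within $\{1,\ldots,k\}$ as each vertex $k\ge 2$ is appended, while vertex $k$ also supplies the self-cycle $a_{kk}$. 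Regarding each coefficient as a function of the $|E|+2$ basis monomials, ordering the coefficients to match $\{a_{kk},C_k\}$, and specializing all edge parameters to $1$, I would show that the resulting $(|E|+2)\times(|E|+2)$ Jacobian is triangular with nonzero diagonal. Invertibility of this Jacobian makes the monomials locally identifiable, whence $\dim\,\mathrm{im}(c)=|E|+2$ and Theorem~\ref{thm:idpathcycle} completes the proof.

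The main obstacle is this final triangularity step. One must arrange both the basis $\{a_{kk},C_k\}$ and a matching family of characteristic-polynomial coefficients so that each basis monomial is first detected by a distinguished coefficient with no later basis element contributing to it, and then verify that this survives the passage back to the two blocks of the $\cm$ coefficient vector --- the char-poly block and the $\partial_{a_{ij}}$ block --- under which the cycles of $G'$ through $j\to i$ reappear as the $i$-to-$j$ paths of $\cm$. Keeping track of the $\mathrm{sign}(C)$ factors and the vertex-disjointness of the cycle collections, so that each diagonal entry is genuinely nonzero at the chosen specialization, is where the real care is required.
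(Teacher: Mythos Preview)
Your reduction to the auxiliary graph $G'=G+(j\to i)$ is a sound first move: the affine-linearity of $\det(\partial I-\tilde A)$ in $a_{ij}$ really does identify $\cm$'s coefficient vector with the pair $\bigl(\det(\partial I-A),\,\det(\partial I-A)_{ij}\bigr)$, and the bijection between cycles of $G'$ and paths/cycles of $G$ is exactly right. You also correctly observe (implicitly) that the hypotheses force $\dist(i,j)=1$, since an inductively strongly connected graph on $|V|$ vertices has at least $2|V|-2$ edges, and $G'$ has $|E|+1=2|V|-\dist(i,j)-1$.

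The gap is the triangularity claim itself: it is simply false. Take $G=\{1\to 2,\ 2\to 3,\ 3\to 2\}$ with $i=1$, $j=2$, so $G'$ is inductively strongly connected with ordering $1,2,3$ and basis $\{a_{11},a_{22},a_{33},\,a_{21},\,a_{23}a_{32}\}$. At the all-ones specialisation the $5\times 5$ Jacobian of $(c_1,c_2,c_3,d_1,d_2)$ with respect to this basis is invertible (determinant~$1$) but admits \emph{no} row/column permutation making it triangular: after peeling off the rows $d_1,d_2$ (each with a unique nonzero entry) one is left with the $3\times 3$ block
\[
\begin{pmatrix}-1&-1&0\\ 2&2&-1\\ 0&-1&1\end{pmatrix}
\]
which has only two zero entries, one short of what any triangular arrangement requires. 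Nor can you bypass this by appealing directly to Theorem~5.13 of \cite{MeshkatSullivant}: that result concerns the coefficient map of $(G',\{v\},\{v\},V)$, whose right-hand side is $\det(\partial I-\tilde A)_{vv}$, \emph{not} $\det(\partial I-A)_{ij}$; these are genuinely different polynomials and the former is not recoverable from $\cm$'s coefficients.

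The paper's proof does run an induction along the inductively-strongly-connected ordering, but the inductive step (Theorem~\ref{thm:add_node}) is what carries all the weight and is not a triangularity argument. One introduces a weight vector $\omega$ that separates the new vertex $n$ from the rest, so that the initial-form Jacobian $J(\phi_{G,\omega})$ becomes \emph{block} lower triangular, with the inductive Jacobian $J(\phi_{G'})$ in the upper-left and a $2\times 3$ block $C$ governing the two new coefficients $(c_n,d_{n-1})$ against the three new parameters $(a_{nn},a_{ln},a_{nk})$. Showing $C$ has rank~$2$ is then done by a carefully chosen chain-of-cycles specialisation under which the entries of $C$ become consecutive Fibonacci numbers, and the identity $F_tF_{t-2}-F_{t-1}^2=(-1)^{t-1}$ finishes the job. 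The need for this device is precisely a symptom of the failure of naive triangularity.
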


Before we prove Theorem \ref{thm:almost_isc}, we define a graph structure which will be useful in the proof.

\begin{defn} [Definition 5.6 from \cite{MeshkatSullivant}] \label{defn:chain}
A \textit{chain of cycles} is a graph $H$ which consists of a sequence of directed cycles that are attached to each other in a chain by joining at the vertices.
\end{defn}

\begin{thm}\label{thm:add_node}
Suppose $\mathcal{M}'=(G',\{i\},\{j\},V)$ represents a linear compartmental model with $G'$ strongly input-output connected and $|E|=2|V|-(\dist(i,j) +2)$.  Suppose too that $\mathcal{M}'$ has expected dimension and also that $\mathcal{M}'$ has no path from $j$ to $i$ and becomes inductively strongly connected if the edge from $j$ to $i$ is added.  Then, if $G$ is a new graph obtained from $G'$ by adding a vertex $n$ and two edges $k\to n$ and $n\to l$ such that $G$ has a chain of cycles containing either $i$ and $n$ or $j$ and $n$, then the model $\mathcal{M}=(G,\{i\},\{j\},V \cup \{n\})$ has expected dimension.
\end{thm}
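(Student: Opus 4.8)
The plan is to show that the image of the coefficient map $c_G$ for $\mathcal{M}=(G,\{i\},\{j\},V\cup\{n\})$ attains its maximal dimension $|E_G|+|In\cup Out|=|E_G|+2$, deducing this inductively from the corresponding statement for $\mathcal{M}'$. First I would do the bookkeeping. Adding the vertex $n$ with the two edges $k\to n$ and $n\to l$ increases the vertex count by one and the edge count by two, so $|E_G|=|E'|+2$, and the edge formula $|E_G|=2|V_G|-(\dist(i,j)+2)$ is inherited from $\mathcal{M}'$ provided $\dist(i,j)$ is unchanged; the shortest $i\to j$ path cannot use $n$ (whose only neighbours are $k$ and $l$) without lengthening, so this holds. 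I would also check that $G$ is strongly input-output connected: the hypothesis that $G$ contains a chain of cycles through $n$ places the two new edges on a common cycle, and since $\mathcal{M}'$ was already strongly input-output connected every other edge remains on a cycle or $i\to j$ path. By Corollary \ref{cor:numpathscycles} the number of independent directed cycles and paths is then $|E_G|+2$, so this is the value we must attain, and by Theorem \ref{thm:idpathcycle} it suffices to reach it.

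The key structural input is a bordered-determinant recursion. Writing the new compartmental matrix $A_G$ as $A_{G'}$ augmented by a row and column for $n$ (with $a_{nk}$ in position $(n,k)$, $a_{ln}$ in position $(l,n)$, and $a_{nn}$ on the diagonal), cofactor expansion along row $n$ and then column $n$ gives, schematically,
\begin{equation*}
\det(\partial I-A_G)=(\partial-a_{nn})\det(\partial I-A_{G'})-a_{nk}a_{ln}\,R(\partial),
\end{equation*}
and the analogous identity holds for the numerator $\det(\partial I-A_G)_{ij}$ (recall $i,j\neq n$). Combinatorially, $R(\partial)$ collects the configurations in which $n$ lies on a genuine cycle $k\to n\to l\rightsquigarrow k$, so its monomials are products of a path from $l$ to $k$ in $G'$ with disjoint cycles; the chain-of-cycles hypothesis is precisely what guarantees such an $l\to k$ path exists, hence $R\not\equiv 0$. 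This recursion exhibits every coefficient of $\mathcal{M}$ as an old coefficient of $\mathcal{M}'$ corrected by a term linear in $a_{nn}$ and a term carrying the product $a_{nk}a_{ln}$.

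I would then prove that the Jacobian of $c_G$ has rank $|E'|+4$ at a generic point. Of the three new parameters $a_{nn},a_{nk},a_{ln}$, the latter two occur only through the product $a_{nk}a_{ln}$ in any cycle or path (every route through $n$ must traverse both new edges), so together they can raise the rank by at most one; with $a_{nn}$ the new directions total at most two, matching the jump from $|E'|+2$ to $|E'|+4$. To see the jump is achieved, I would set up the Jacobian in block form with the old parameters first and $(a_{nn},a_{nk})$ last. The parameter $a_{nn}$ enters the new trace coefficient $c_1=c'_1-a_{nn}$ with $\partial c_1/\partial a_{nn}=-1$, giving one direction visibly independent of all derivatives with respect to old parameters. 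The product $a_{nk}a_{ln}$ multiplies the nonzero polynomial $R$ (and its numerator analogue), and the monomials it produces are exactly the new cycles through $n$, which appear nowhere in the image of $c_{G'}$ (by Proposition \ref{prop:factor}); specializing the remaining new parameters as in the proof of Lemma \ref{lemma:algindset} lets me restrict to a block-triangular Jacobian whose diagonal blocks are the full-rank old Jacobian (using that $\mathcal{M}'$ has expected dimension) and a two-dimensional block recording the two new directions.

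The main obstacle is this last independence claim: because the coefficients of $\mathcal{M}$ are the old coefficients \emph{corrected} by the $a_{nn}$ and $a_{nk}a_{ln}$ terms, I cannot simply quote the Jacobian of $c_{G'}$, and I must rule out that the correction terms are already dependent on the old coefficient directions. This is exactly where the strength of the hypothesis — a chain of cycles containing $n$ \emph{together with} $i$ or with $j$, rather than $n$ merely lying on some cycle — is used: the chain ties the new cycle through $n$ to the input or output vertex, so the new monomial survives in the numerator coefficient $\det(\partial I-A_G)_{ij}$ (not only in the characteristic polynomial) and stays linearly independent of the span of the old coefficient differentials. I expect the cleanest way to finish is to exhibit a single well-chosen specialization of all edge weights, again in the style of Lemma \ref{lemma:algindset}, at which the two new rows of the Jacobian are manifestly independent of the old full-rank block, so that the rank is exactly $|E_G|+2$ and $\mathcal{M}$ has expected dimension.
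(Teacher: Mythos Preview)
Your high-level strategy is sound and close in spirit to the paper's, but the proposal has a genuine gap at exactly the point you flag as the ``main obstacle.'' The cofactor recursion
\[
\det(\partial I-A_G)=(\partial-a_{nn})\det(\partial I-A_{G'})-a_{nk}a_{ln}R(\partial)
\]
is correct, but it does \emph{not} give a block-triangular Jacobian: the $a_{nn}$ and $a_{nk}a_{ln}$ corrections contaminate \emph{every} coefficient of $c_G$, so the old-parameter block is not $J(c_{G'})$, and your appeal to the inductive hypothesis on $\mathcal{M}'$ does not apply directly. Invoking the specialization from Lemma~\ref{lemma:algindset} does not fix this, since that lemma concerns the path/cycle map $\pi$, not the coefficient map. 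You end by saying you ``expect'' a well-chosen specialization exists; that is precisely the hard part, and you have not supplied it.

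The paper closes this gap with two devices you are missing. First, instead of working with $c_G$ directly it passes to an \emph{initial form} $\phi_{G,\omega}$ for a weight $\omega$ assigning weight $0$ to $a_{nn}$, weight $\tfrac12$ to $a_{nk},a_{ln}$, and weight $1$ to all old parameters; Lemma~\ref{lemma:images} says $\dim\im\phi_{G,\omega}\le\dim\im\phi_G$, so it suffices to compute the former. Under this weight, every coefficient except the two of lowest order (one on each side of the input-output equation) has leading part equal to the corresponding $G'$-coefficient, so the Jacobian of $\phi_{G,\omega}$ is genuinely block lower-triangular with top block $J(\phi_{G'})$. Second, to show the remaining $2\times 3$ block $C$ has rank $2$, the paper exhibits a very specific point: set all diagonals to $1$, zero out edges not on the chain of cycles $s_2,\ldots,s_t$ (with $s_1$ a shortest $i\to j$ path, $s_t$ the cycle through $n$), and normalize each $s_r$ so its edge-product equals $(-1)^{\ell(s_r)-1}$. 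A combinatorial count of vertex-disjoint subfamilies then evaluates $C$ to a matrix of Fibonacci numbers, and the identity $F_tF_{t-2}-F_{t-1}^2=(-1)^{t-1}$ gives rank $2$. The chain-of-cycles hypothesis through $i$ (or $j$) is used here to ensure the path $s_1$ interacts with the chain so that the numerator row of $C$ is nontrivial. Without the weight-vector degeneration and this Fibonacci specialization, your argument is an outline rather than a proof.
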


Recall that we can induce a \textit{weight order} on a polynomial ring $\kk[x_1,\ldots , x_n]$ for some weight vector $\omega \in \qq^n$ where the weight of a monomial $x_1^{\alpha_1} \cdots x_n^{\alpha_n}$ is $\omega \cdot \alpha$ where $\alpha = (\alpha_1, \ldots , \alpha_n)$.  We can then define the \textit{initial forms} of a polynomial $f$ as $\inw(f)$ to be the sum of all terms of $f$ whose monomial has the highest weight with respect to said $\omega$.  

Now, if we define the coefficient map associated to the graph $G$ as $\phi_G \colon \rr^{|V|+|E|} \to \rr^{k}$, then we can also consider the pull-back of said map defined as $\phi_G^* \colon \kk [c,d] \to \kk[a]$ where $c,d$ correspond to the coefficients of the left and right-hand side of the input-output equation respectively, and $a$ corresponds to the parameters found in compartmental matrix $A$.  Now define $\phi_{G,\omega}$ to be the initial parameterization defined as the parameterization with pullback $\phi^*_{G,\omega}$ where $\phi^*_{G,\omega} (f) = \inw(f)$ for a given weight $\omega$.

\begin{lemma}[Corollary 5.9, \cite{MeshkatSullivant}]\label{lemma:images}
Let $\phi^* \colon \kk[x] \to \kk[y]$ be a $\kk$-algebra homomorphism and $\omega \in \qq^m$ a weight vector, then
\[
\dim(\im \phi_\omega ) \leq \dim( \im \phi ).
\]
\end{lemma}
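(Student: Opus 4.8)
The plan is to realize the initial parameterization $\phi_\omega$ as the $s\to 0$ limit of a one-parameter family of reparametrizations of $\phi$, and then to invoke lower semicontinuity of the rank of the Jacobian. Write $f_i = \phi^*(x_i)\in\kk[y]$ for the images of the generators, so that $\dim(\im\phi)$ equals the generic rank of the Jacobian $(\partial f_i/\partial y_j)$ (we are in characteristic zero, so the dimension of the image coincides with the generic Jacobian rank, exactly as used elsewhere in the paper). After scaling $\omega$ by a positive integer, which clears denominators while preserving every initial form since it preserves the ordering of the values $\omega\cdot\alpha$, I may assume $\omega\in\zz^m$. Setting $N_i=\max\{\omega\cdot\alpha : y^\alpha\text{ appears in }f_i\}$, I define
\[
h_i(y,s)\;=\;s^{N_i}\,f_i\!\left(s^{-\omega_1}y_1,\ldots,s^{-\omega_m}y_m\right)\;=\;\sum_\alpha c_{i,\alpha}\,s^{\,N_i-\omega\cdot\alpha}\,y^\alpha .
\]
Every exponent $N_i-\omega\cdot\alpha$ is nonnegative, so each $h_i$ lies in $\kk[y,s]$, and specializing $s=0$ annihilates all but the top-weight terms, giving $h_i(y,0)=\inw(f_i)$. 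Thus $h(\cdot,0)$ is exactly $\phi_\omega$.

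Next I would observe that for each fixed $s_0\neq 0$ the map $y\mapsto h(y,s_0)$ is $\phi$ composed with the linear automorphism $\sigma_{s_0}\colon y_j\mapsto s_0^{-\omega_j}y_j$ of the source and the linear automorphism $\tau_{s_0}\colon x_i\mapsto s_0^{N_i}x_i$ of the target, i.e. $h(\cdot,s_0)=\tau_{s_0}\circ\phi\circ\sigma_{s_0}$. Since both scalings are honest isomorphisms of affine space, the chain rule shows $\mathrm{Jac}_y\,h(y,s_0)$ has the same rank as $\mathrm{Jac}\,\phi$ at the corresponding point, so its generic rank in $y$ equals $r:=\dim(\im\phi)$ for every $s_0\neq0$. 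Because the generic rank is the maximal rank attained, this gives $\mathrm{rank}\,\mathrm{Jac}_y\,h(y,s)\leq r$ for \emph{all} $(y,s)$ with $s\neq0$.

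Finally I would close the argument with the standard minor step. Let $r':=\dim(\im\phi_\omega)$, the generic rank of $\mathrm{Jac}_y\,h(y,0)$, and pick $y_0$ with $\mathrm{rank}\,\mathrm{Jac}_y\,h(y_0,0)=r'$, so that some $r'\times r'$ minor $M(y,s)$ of $\mathrm{Jac}_y\,h$ satisfies $M(y_0,0)\neq0$. Then $M(y_0,s)$ is a nonzero polynomial in $s$, hence nonvanishing at some $s_1\neq0$; at $(y_0,s_1)$ the Jacobian has rank $\geq r'$, while the previous paragraph forces rank $\leq r$. Therefore $r'\leq r$, which is the assertion. The one point demanding care — and the genuine content of the lemma — is precisely this semicontinuity direction: passing to initial forms can only drop the dimension of the image, never raise it. The normalization by $s^{N_i}$ is exactly what guarantees that the $s=0$ fiber recovers $\phi_\omega$ (rather than some other weight face), and checking that for $s\neq0$ the family consists of genuine invertible reparametrizations, so that its generic Jacobian rank stays equal to $r$, is the step I would verify most carefully.
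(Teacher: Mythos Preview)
Your argument is correct. It is the standard one-parameter degeneration proof: rescale the source variables by $s^{-\omega_j}$, renormalize by $s^{N_i}$ so that the $s=0$ fiber is exactly the initial-form parametrization, observe that every $s\neq 0$ fiber is $\phi$ up to invertible diagonal scalings on source and target, and then use lower semicontinuity of Jacobian rank via a nonvanishing minor. Each step is carried out carefully, including the point that for $s\neq 0$ the rank is bounded above by the \emph{generic} rank $r$ at every point, which is what makes the final inequality go through.

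There is nothing to compare against in the present paper: the lemma is quoted as Corollary~5.9 of \cite{MeshkatSullivant} and is not reproved here. Your degeneration argument is precisely the standard one underlying that result (and, more generally, the fact that passing to initial ideals or initial parametrizations can only lower dimension), so there is no substantive methodological difference to discuss.
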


We will use Lemma \ref{lemma:images} in the following way.  We want to compute the dimension of the image of a polynomial parametrization $\phi$.  We know for other reasons an upper bound $d$ on this dimension.  We have a weight vector $\omega$ where we can compute the dimension of the image of the polynomial parametrization $\phi_\omega$, and we show it is equal to $d$.  Then, by Lemma \ref{lemma:images}, we know that the dimension of the image of $\phi$ must be $d$.

\begin{proof}[Proof of Theorem \ref{thm:add_node}]
Suppose $\mathcal{M}'=(G',\{i\},\{j\},V)$ is a linear compartmental model with expected dimension such that $G'$ is strongly input-output connected, $|E|=2|V|-(\dist(i,j)+2)$.  Also suppose that if we add the edge from $j$ to $i$, the new graph becomes inductively strongly connected.  Note that by Theorem \ref{thm:numcoeffs}, the input-output equation of the model $\mathcal{M}'$ has $2|V|-\dist(i,j)$ nonzero, non-monic coefficients.  Let $|V|=n-1$ and $m=|E|$.

Define $\phi_G \colon \rr^{|V|+|E|} \to \rr^{2|V|-\dist(i,j)}$, to be the coefficient map associated to a graph $G$ as above with corresponding pull-back $\phi^*_G$.  Choose weight $\omega$ as follows:
\[
\omega_{uv} = \begin{cases}
0 & \text{if } (u,v) = (n,n) \\
\frac{1}{2} & \text{if } (u,v)=(n,k) \text{ or } (l,n) \\
1 & \rm{otherwise}.
\end{cases}
\]

Recall that for each coefficient, the corresponding polynomial function is homogeneous in terms of the parameters.  Also, recall that the left-hand side coefficients are generated by cycles of the corresponding graph, while the right-hand side coefficients are generated by products of cycles of the corresponding graph along with paths from the input to output.

Applying this weight to the polynomial coefficients has the effect of removing any monomial containing a cycle which is incident to compartment $n$, in all coefficients except for the lowest order terms in both $c$ and $d$.  Note that in the case of $c_n$ and $d_{n-1}$, each of the monomials in the sum will have a cycle incident to $n$, meaning that each of them has the same weight.  

More explicitly, in terms of the pull-back maps we have in all cases that
\begin{align*}
\phi^*_{G,\omega}(c_i) &= \phi^*_{G'}(c_i) \quad i=1, \ldots , n-1 \\
\phi^*_{G,\omega}(d_i) &= \phi^*_{G'}(d_i) \quad i=\dist(i,j), \ldots , n-2 \\
\phi^*_{G,\omega}(c_n) &= \phi^*_G(c_n) \\
\phi^*_{G,\omega}(d_{n-1}) &= \phi^*_G(d_{n-1}).
\end{align*}

Thus, $\phi_{G,\omega}$ agrees with $\phi_{G'}$ everywhere except for the highest order coefficients on either side of the input-output equation, in which case $\phi_{G,\omega}$ matches $\phi_G$.  This implies that the Jacobian matrix corresponding to $\phi_{G,\omega}$ defined as $J(\phi_{G,\omega})$ (whose generic rank yields the dimension of the image of the map), has the form
\[
J(\phi_{G,\omega} ) = \begin{pmatrix}
J (\phi_{G'} )& 0 \\ * & C
\end{pmatrix}
\]

where $J(\phi_{G'})$ is the $(2n-(\dist(i,j)+2)) \times (n+m-3)$ Jacobian matrix of $\phi_{G'}$ and $C$ is the $2\times 3$ matrix
\[
C = \begin{pmatrix}
\frac{\partial c_n }{\partial a_{nn}} & \frac{\partial c_n }{\partial a_{ln}} & \frac{\partial c_n}{\partial	a_{nk}} \\
\frac{\partial d_{n-1} }{\partial a_{nn} } & \frac{\partial d_{n-1} }{\partial a_{ln} } & \frac{\partial d_{n-1} }{\partial a_{nk}}
\end{pmatrix}
\]
where $l$ and $k$ are the nodes to which the added node $n$ has an edge to and from respectively.

Note that we assume that the model corresponding to $G'$ has expected dimension, hence $J(\phi_{G'})$ has rank $2(n-1)-\dist(i,j)$.  Since $J(\phi_G)$ is lower block triangular, to show that it has rank $2n-\dist(i,j)$, we need only show that $C$ has generic rank 2.

Let $H$ be a chain of cycles in $G$ defined as $s_2, \ldots , s_t$ in order such that $s_2$ is a cycle containing either the input or the output and $s_t$ is the cycle containing the node $n$.  Also, define $s_1$ to be one of the shortest paths from $i$ to $j$.

Now we will choose entries for the matrix $A$ such that the matrix $C$ has rank 2.   First, let all diagonal elements of $A$ be $1$, i.e. $a_{kk}=1$ for all $k=1,\ldots , n$.  Also, let $a_{uv} = 0$ for all edges $v \to u \not\in H$.  For all edges in $H$, for each cycle $s_i$, choose the edge weights so that the product of edges' weights is equal to $(-1)^{\ell(s_i)-1}$, that is so that the product of the edges in the cycle is equivalent to the sign of the cycle.  For $s_1$, choose edge weights so that the product of the edges' weights is also equal to $(-1)^{\ell(s_i)-1}.$

First, consider the entry $\frac{\partial c_n}{\partial a_{nn}}$.  The only nonzero monomials appearing here will arise from taking products of the cycles $s_2,\ldots , s_{t-1}$, since the cycle $s_t$ cannot be involved, as we only consider the elements of the sum of $c_n$ with $a_{nn}$ as a factor.  Also, $s_1$ is not a cycle, hence cannot be part of any of the $c_i$.  Since each cycle touches its two neighboring cycles, and no other cycles, and in the expansion we expand over all products of nontouching cycles that cover all $n$ vertices, we get that the number of monomials will equal the number of subsets of $\{2,\ldots , t-1\}$ with no adjacent elements.  By Lemma \ref{lemma:fibo}, this is exactly $F_{t}$.

Now consider the entry $\frac{\partial d_{n-1}}{ \partial a_{nn}}$, which will arise from taking products of $s_1,s_3,\ldots , s_{t-1}$, since we must have $s_1$, hence cannot have $s_2$, and by similar reasoning above cannot have $s_t$.  Thus, we get that the number of monomials are the number of nonadjacent subsets of $\{s_3,\ldots, s_{t-1}\}$, hence we get the Fibonacci number $F_{t-1}$.

In the case of the entry $\frac{\partial c_n}{\partial a_{ln}}$ or equivalently $\frac{\partial c_n}{\partial a_{nk}}$, we must use the cycle $s_t$ prohibiting us from using $s_{t-1}$, and again must not use $s_1$.  This yields that the number of monomials is the number of nonadjacent subsets of $s_{2},\ldots , s_{t-2}$, i.e. the Fibonacci number $F_{t-1}$.

Finally, when considering entry $\frac{\partial d_{n-1} }{\partial a_{ln}}$ or equivalently $\frac{\partial d_{n-1} }{\partial a_{nk}}$, we must use $s_1$ and cycles $s_t$, hence cannot use cycles $s_2$ or $s_{t-1}$.  This means that the number of monomials will be the number of nonadjacent subsets of $s_3,\ldots , s_{t-2}$, i.e. the Fibonacci number $F_{t-2}$.

Thus, the submatrix $C$ will have the form 
\[C = \begin{pmatrix}
F_t & F_{t-1} & F_{t-1} \\
F_{t-1} & F_{t-2} & F_{t-2}
\end{pmatrix}.
\]

The classical identity of Fibonacci number $F_{t}F_{t-2} - F_{t-1}^2 = (-1)^{t-1}$ yields that this matrix has full rank. hence, the Jacobian of $\phi^*_{G,\omega}$ has full rank.

Note that the upper bound for the number of coefficients of the input-output equation, i.e. the upper bound on the dimension of the image of the coefficient map is $2n-\dist(i,j)$ via Theorem \ref{thm:numcoeffs}.  Thus, because $\dim(\im(\phi^*_{G,\omega})) \leq \dim(\im(\phi^*_G))$ by Lemma \ref{lemma:images}, and $\dim(\im(\phi^*_G))$ is bounded above by $2n-\dist(i,j)$, we have that $\dim(\im(\phi^*_G))=2n-\dist(i,j)$ as desired.
\end{proof}

\begin{lemma}\label{lemma:fibo}
The number of subsets $S$ of $\{1,2,\ldots , n\}$ such that $S$ contains no pair of adjacent numbers is the $n+2$-nd Fibonacci number, $F_{n+2}$ which satisfies the recurrence $F_0=0$, $F_1=1$, and $F_{n+1}=F_n+F_{n-1}$.
\end{lemma}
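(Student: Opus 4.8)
The plan is to set up a combinatorial recurrence for the counting function and match it against the Fibonacci recurrence, then conclude by induction. Let $a_n$ denote the number of subsets $S \subseteq \{1, 2, \ldots, n\}$ containing no pair of adjacent integers, with the convention that $a_0$ counts the subsets of the empty set. First I would dispatch the base cases by direct enumeration: for $n=0$ the only admissible subset is the empty set, so $a_0 = 1 = F_2$, and for $n=1$ the admissible subsets are $\emptyset$ and $\{1\}$, so $a_1 = 2 = F_3$.

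The key step is to establish the recurrence $a_n = a_{n-1} + a_{n-2}$ for $n \geq 2$ by conditioning on whether the largest element $n$ belongs to $S$. If $n \notin S$, then $S$ is precisely an admissible subset of $\{1, \ldots, n-1\}$, and these are counted by $a_{n-1}$. If $n \in S$, then the no-adjacency condition forces $n-1 \notin S$, so $S \setminus \{n\}$ is an admissible subset of $\{1, \ldots, n-2\}$; conversely, every such subset extends uniquely to an admissible subset of $\{1,\ldots,n\}$ by adjoining $n$. Hence the case $n \in S$ contributes $a_{n-2}$, and summing the two disjoint cases gives $a_n = a_{n-1} + a_{n-2}$.

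Finally, since $a_n$ satisfies exactly the Fibonacci recurrence with the shifted initial data $a_0 = F_2$ and $a_1 = F_3$, a routine induction on $n$ propagates the identity and yields $a_n = F_{n+2}$ for all $n \geq 0$, which is the claim.

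There is no genuine obstacle in this argument; it is a standard fact. The only point requiring care is the index shift, namely verifying that the initial conditions $a_0 = 1$ and $a_1 = 2$ align with $F_2$ and $F_3$ rather than with $F_0$ and $F_1$, so that the shared recurrence carries the offset through to $a_n = F_{n+2}$ correctly.
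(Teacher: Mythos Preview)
Your argument is correct and is the standard proof of this classical identity: condition on membership of $n$ to obtain $a_n = a_{n-1} + a_{n-2}$, check the base cases $a_0 = F_2$ and $a_1 = F_3$, and conclude by induction. The paper itself does not prove this lemma at all; it is simply stated as a known fact and invoked inside the proof of Theorem~\ref{thm:add_node}. So there is nothing to compare against, and your write-up would serve perfectly well as the omitted justification.
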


We can now prove Theorem \ref{thm:almost_isc}.

\begin{proof} [Proof of Theorem \ref{thm:almost_isc}]
By Theorem \ref{thm:add_node} and the inductive nature of inductively strongly connected graphs, it suffices to show that every inductively strongly connected graph beginning with a cycle between $i$ and $j$ has a chain of cycles containing the vertices $i$ and $n$ (or, analogously, a chain of cycles containing the vertices $j$ and $n$).  

We prove this by induction on $n$.  Since $G$ is inductively strongly connected if the edge from $j$ to $i$ is added, there is a nontrivial cycle $c$ that passes through the vertex $n$.  If $c$ contains $i$, we are done.  Otherwise, let $q$ be the smallest vertex appearing in $c$, and let $G'$ be the induced subgraph on $\{i,j,\ldots,q\}$.  By induction, $G'$ has a chain of cycles containing $i$ and $q$.  Attaching $c$ to $H$ gives a chain of cycles in $G$ containing $i$ and $n$.  A similar argument can be applied to give a chain of cycles in $G$ containing $j$ and $n$.
\end{proof}

Theorem \ref{thm:almost_isc} is not only useful as a sufficient condition for an identifiable path/cycle model, but it is also useful as a means to start with an identifiable path/cycle model and then remove leaks to obtain identifiability:

\begin{cor} \label{cor:almost_isc_id} Let $\cm=(G,\{i\},\{j\},V)$ represent a linear compartmental model with $G$ strongly input-output connected and $|E| = 2|V|-(\dist(i,j)+2)$.  If $\cm=(G,\{i\},\{j\},V)$ has no path from compartment $j$ to compartment $i$ but becomes inductively strongly connected if the edge from compartment $j$ to compartment $i$ is added, then $\cm'=$ \\
$(G,\{i\},\{j\},\{i,j\})$ is locally identifiable, i.e. removing all but two leaks in the input/output compartments.
\end{cor}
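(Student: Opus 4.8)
The plan is to obtain this corollary by chaining the two principal results of this section, since its hypotheses are engineered to match both in sequence. First I would observe that the assumptions placed on $\cm=(G,\{i\},\{j\},V)$ — namely that $G$ is strongly input-output connected, that $|E| = 2|V|-(\dist(i,j)+2)$, and that $G$ has no path from $j$ to $i$ but becomes inductively strongly connected upon adding the edge from $j$ to $i$ — are \emph{verbatim} the hypotheses of Theorem \ref{thm:almost_isc}. Invoking that theorem directly yields that $\cm=(G,\{i\},\{j\},V)$ is an identifiable path/cycle model, which discharges all of the structural content of the statement.

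Next I would check that this identifiable path/cycle model falls into the first case of Theorem \ref{thm:removeleaks}: here $|Out|=1$ and $G$ is strongly input-output connected, so the required connectedness hypothesis is satisfied. Taking $In=\{i\}$, $Out=\{j\}$, and $L=\{i,j\}$, we have $L = In \cup Out$, so Theorem \ref{thm:removeleaks} applies and concludes that $\cm'=(G,\{i\},\{j\},\{i,j\})$ is locally identifiable. This is exactly the assertion, and the phrase ``removing all but two leaks in the input/output compartments'' is simply the reading that $|L| = |In \cup Out| = 2$.

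The only genuine point requiring care — and it is bookkeeping rather than a real obstacle — is verifying that $i \neq j$, so that $|In \cup Out| = 2$ rather than $1$. This is implicit in the statement: the quantity $\dist(i,j)$ is defined only for distinct compartments (Definition \ref{defn:dist}), and the hypothesis that no path from $j$ to $i$ exists while the edge from $j$ to $i$ can be added presumes $i \neq j$. Once $i \neq j$ is confirmed, the restricted model $\cm'$ has exactly $|E| + 2 = |E| + |In \cup Out|$ parameters, matching the expected dimension furnished by Theorem \ref{thm:removeleaks}, which forces local identifiability. No further computation is needed, since all of the substantive work has been front-loaded into Theorems \ref{thm:almost_isc} and \ref{thm:removeleaks}.
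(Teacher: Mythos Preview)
Your proposal is correct and follows exactly the same approach as the paper: the paper's proof is the single sentence ``This follows from Theorem \ref{thm:almost_isc} and Theorem \ref{thm:removeleaks}.'' Your additional remark that $i \neq j$ (so $|In\cup Out|=2$) is a helpful bookkeeping observation but does not alter the argument.
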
 

\begin{proof} This follows from Theorem \ref{thm:almost_isc} and Theorem \ref{thm:removeleaks}.
\end{proof}

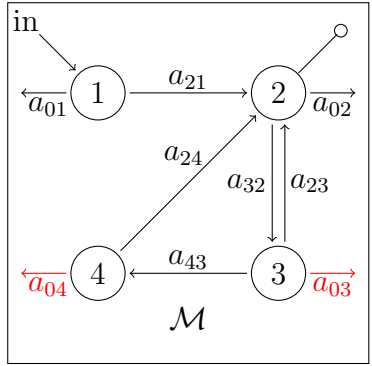
\begin{figure}[H]
\begin{center}
	\begin{tikzpicture}[scale=1.2]
 	\draw (0,0) circle (0.3);
 	\draw (2,0) circle (0.3);
 	\draw (2,-2) circle (0.3);
 	\draw (0,-2) circle (0.3);
    	\node[] at (0, 0) {1};
    	\node[] at (2, 0) {2};
    	\node[] at (2, -2) {$3$};
    	\node[] at (0, -2) {$4$};
	 \draw[->] (0.35, 0) -- (1.65, 0);
 	 \draw[->] (.25,-1.75) -- (1.75,-.25); 
 	 \draw[<-] (2.07,-.35) -- (2.07,-1.65);
 	 \draw[->] (1.93,-.35) -- (1.93,-1.65);
 	 \draw[<-] (0.35,-2) -- (1.65,-2);
   	 \node[] at (1, 0.15) {$a_{21}$};
   	 \node[] at (1, -1.85) {$a_{43}$};
   	 \node[] at (.95, -.7) {$a_{24}$};
   	 \node[] at (2.35, -1) {$a_{23}$};
	 \node[] at (1.65,-1) {$a_{32}$};
	\draw (2.69,.69) circle (0.07);	
	 \draw[-] (2.65, .65 ) -- (2.22, .22);	
	 \draw[->] (-.65, .65) -- (-.25, .25);	
   	 \node[] at (-.8,.8) {in};
	 \draw[->,red] (-0.35, -2) -- (-.85, -2);	
   	 \node[] at (-.55, -2.15) { \color{red}$a_{04}$};
	 \draw[->,red] (2.35, -2) -- (2.85, -2);	
   	 \node[] at (2.6, -2.15) {\color{red} $a_{03}$};
	 \draw[->] (-0.35, 0) -- (-.85, 0);	
   	 \node[] at (-.55, -0.15) {$a_{01}$};
	 \draw[->] (2.35, 0) -- (2.85, 0);	
   	 \node[] at (2.6, -0.15) {$a_{02}$};

%
%
\draw (-1,-3) rectangle (3, 1);
    	\node[] at (1, -2.5) {$\cm$};
%

\end{tikzpicture}
\end{center}
\caption{The model $\cm$ corresponds to the above graph with all four leaks, while the graph $\cm'$ has the same graph with only the black leaks, that is leaks in compartments 1 and 2, all from Example \ref{ex:isc}.}
\end{figure}

\begin{ex} \label{ex:isc} The model $\cm=(G,\{1\},\{2\},V)$ with $G$ given by the edges $\{ 1 \rightarrow 2, 2 \rightarrow 3, 3 \rightarrow 4, 4 \rightarrow 2, 3 \rightarrow 2 \}$ is an identifiable path/cycle model by Theorem \ref{thm:almost_isc}.  Thus, the model $\cm'=(G,\{1\},\{2\},\{1,2\})$ where we remove all but two leaks from the input/output compartments is locally identifiable.
\end{ex}

We also note that, as cycles with input/output in the same compartment were shown to have expected dimension in \cite{MeshkatSullivant} (see Proposition 5.4), paths from input to output can be shown to have expected dimension as well.

\begin{prop} \label{prop:path} Let $\cm=(G,\{1\},\{|V|\},V)$ be a linear compartmental model with $G$ given by a path from input $1$ to output $|V|$ with $|V|-1$ edges.  Then $\cm$ is an identifiable path/cycle model and the model $\wcm=(G,\{1\},\{|V|\},\{1,|V|\})$ is locally identifiable.
\end{prop}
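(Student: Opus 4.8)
The plan is to reduce everything to Theorem \ref{thm:idpathcycle} by computing that the coefficient map for the path has image of dimension $|E| + |In \cup Out| = (|V|-1) + 2 = |V|+1$. Write $n = |V|$. First I would observe that with the reparametrized diagonal, the compartmental matrix $A$ of the path $1 \to 2 \to \cdots \to n$ is lower bidiagonal, carrying $a_{ii}$ on the diagonal and the edge parameter $a_{i+1,i}$ on the subdiagonal. Then $\partial I - A$ is lower triangular, so $\det(\partial I - A) = \prod_{i=1}^{n}(\partial - a_{ii})$, and the left-hand-side coefficients of the input-output equation from Theorem \ref{thm:ioscc} are, up to sign, the elementary symmetric polynomials $e_1, \ldots, e_n$ in the diagonal parameters $a_{11}, \ldots, a_{nn}$.

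Next I would compute the single right-hand-side coefficient. Deleting row $1$ and column $n$ from $\partial I - A$ leaves an upper-triangular matrix whose diagonal entries are exactly the subdiagonal entries $-a_{21}, -a_{32}, \ldots, -a_{n,n-1}$, so its determinant is $(-1)^{n-1}$ times the path monomial $a^P = a_{21} a_{32} \cdots a_{n,n-1}$. Combined with the sign $(-1)^{1+n}$ from Theorem \ref{thm:ioscc}, the input-output equation becomes $\prod_{i=1}^{n}(\partial - a_{ii})\, y_n = a^P u_1$. Hence the coefficient map sends $(a_{11}, \ldots, a_{nn}, a_{21}, \ldots, a_{n,n-1})$ to $(e_1, \ldots, e_n, a^P)$ up to signs.

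The key step is to show this image has dimension exactly $n+1$. The diagonal parameters $a_{11}, \ldots, a_{nn}$ and the edge parameters $a_{21}, \ldots, a_{n,n-1}$ are disjoint sets of variables; the coordinates $e_1, \ldots, e_n$ depend only on the former while $a^P$ depends only on the latter, so the Jacobian is block diagonal. The $n \times n$ block from $(e_1, \ldots, e_n)$ in the $a_{ii}$ has full rank at a point with distinct $a_{ii}$, since its determinant is (up to sign) the Vandermonde product $\prod_{i<j}(a_{ii} - a_{jj})$, and the gradient of $a^P$ in the edge parameters is nonzero generically, contributing one further independent row. Thus the generic Jacobian rank is $n+1$, meeting the upper bound $|E|+|In\cup Out|$ from Lemma \ref{lemma:dimc}. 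Since the path is connected and every edge lies on the unique path from input $1$ to output $n$, the graph $G$ is strongly input-output connected with $|Out|=1$, so Theorem \ref{thm:idpathcycle} applies and $\cm$ is an identifiable path/cycle model.

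Finally, for local identifiability of $\wcm = (G, \{1\}, \{|V|\}, \{1, |V|\})$, I would invoke Theorem \ref{thm:removeleaks} with $L = \{1, |V|\} = In \cup Out$: the hypotheses (strongly input-output connected, $|Out|=1$, identifiable path/cycle model) hold, so the restricted model has expected dimension, and since $|L| = |In \cup Out|$ the number of parameters equals the image dimension, giving local identifiability. I expect the only genuine content to be the image-dimension computation, and even there the triangular structure makes the determinant and the minor immediate; the main (still routine) point is the dominance of the elementary symmetric map, which is precisely the nonvanishing of the Vandermonde discriminant at generic diagonal values.
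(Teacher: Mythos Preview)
The proposal is correct and follows essentially the same approach as the paper: both compute the characteristic polynomial as $\prod_i(\partial - a_{ii})$ from triangularity, identify the single right-hand-side coefficient as the path monomial $a^P$, and then apply Theorem~\ref{thm:removeleaks}. The only cosmetic difference is that you verify the image has dimension $n+1$ via the Jacobian (block structure plus Vandermonde), whereas the paper phrases the same fact as ``the roots of a polynomial can be determined from its coefficients''; these are equivalent.
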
 

\begin{proof} Let $n=|V|$. Assume $\cm=(G,\{1\},\{n\},V)$ is a linear compartmental model with $G$ given by a path from input $1$ to output $n$ with $n-1$ edges.  Recall the coefficients on the left hand side of the input-output equation are given by the characteristic polynomial of $A$, which is:

$$(\lambda - a_{11})(\lambda - a_{22})\cdots(\lambda - a_{nn})$$

Since the roots of a polynomial can be determined from its coeffiicents, then all of $a_{11},a_{22},\ldots,a_{nn}$ are locally identifiable.  Since the degree of the highest-order term on the right hand side of the input-output equation is $n-1-\dist(1,n)$ by Lemma \ref{lemma:hot}, this reduces to zero so the right hand side is $a_{n,n-1}\cdots a_{32}a_{21} u_n$. Thus the monomial path $a_{n,n-1}\cdots a_{32}a_{21}$ is identifiable.  This means the dimension of the image of the coefficient map is $|E|+|In \cup Out|=n-1+2=n+1$ which is the number of paths and cycles, thus the model is an identifiable path/cycle model.  By Theorem \ref{thm:removeleaks}, the model $\wcm=(G,\{1\},\{n\},\{1,n\})$ is locally identifiable.
\end{proof}

In \cite{MeshkatSullivant}, it was shown in Proposition 5.5 that if a model $\cm=(G,\{1\},\{1\},V)$ has expected dimension, then the model $\cm=(G',\{1'\},\{1'\},V)$ also has expected dimension, where $G'$ is the new graph obtained from $G$ by adding a new vertex $1'$ and an exchange $1 \rightarrow 1', 1' \rightarrow 1$ and making $1'$ the new input-output node.  We show an analogous result now:

\begin{prop} \label{prop:addvertex} Let $\cm=(G,\{1\},\{j\},V)$ be a linear compartmental model that has expected dimension where $|V|=n$ and $j \neq 1$.  Let  $G'$ be a new graph obtained from $G$ by adding a set of new vertices $n+1, n+2, \ldots, n+k$ and a set of edges $n+1 \rightarrow 1, n+2 \rightarrow n+1, \ldots, n+k \rightarrow n+(k-1)$ and making $n+k$ the new input node. Then the model $\wcm=(G',\{n+k\},\{j\},V \cup \{n+1, n+2, \ldots, n+k\})$ also has expected dimension. 
\end{prop} 

\begin{proof} Let $A$ be the full matrix associated to the graph $G'$ where the first $k$ rows and $k$ columns correspond to the added path from compartment $n+k$ to compartment $1$, $A_{k}$ be the matrix where the first $k$ rows and first $k$ columns have been deleted (and, hence associated to the graph $G$), and let $E_G$ be the edges of the graph $G$.  We assume that the dimension of the image of the map $c$ associated to the model $\cm=(G,\{1\},\{j\},V)$ is $|E_G|+|In \cup Out| = |E_G|+2$, and we want to show that for the model $\wcm=(G',\{n+k\},\{j\},V \cup \{n+1, n+2, \ldots, n+k\})$ we get $|E_{G'}|+2=|E_G|+k+2$, as we are adding $k$ new edges.  

The input-output equation for the model $\wcm=(G',\{n+k\},\{j\},V \cup \{n+1, n+2, \ldots , n+k\})$ is:

$$\det(\partial{I}-A)y_j = \det(\partial{I}-A)_{1,{j+k}} u_{n+k} $$

where $\det(\partial{I}-A)=(\partial - a_{n+1,n+1})\cdots(\partial - a_{n+k,n+k})\det(\partial{I}-A_{k})$ and $\det(\partial{I}-A)_{1,{j+k}}= a_{1,n+1} a_{n+2,n+1} \cdots a_{n+k,n+k-1} \det(\partial{I}-A_{k})_{1j}$. The input-output equation for the model $\cm=(G,\{1\},\{j\},V)$ is:  

$$\det(\partial{I}-A_{k})y_j = \det(\partial{I}-A_{k})_{1j} u_{1} $$

For notational ease, write $\partial$ as $\lambda$, $|V|=n$, $p =(a_{n+1,n+1}, \ldots, a_{n+k,n+k})$, and $q=a_{1,n+1} a_{n+2,n+1} \cdots a_{n+k,n+k-1}$. Note that $p$ is the vector of new self-cycles and $q$ can be interpreted as the added monomial path from compartment $n+k$ to compartment $1$.  We can write $\det(\lambda{I}-A_{k})$ as:
$$
\lambda^n + c_1 \lambda^{n-1} + \ldots + c_{n-1} \lambda + c_n
$$
 and we can write $\det(\lambda{I}-A_{k})_{1j}$ as 
$$
d_1\lambda^{n-1} + d_2 \lambda^{n-2} + \ldots + d_{n-1} \lambda + d_{n}.
$$
  Thus $\det(\lambda{I}-A)=(\lambda-a_{n+1,n+1}) \cdots (\lambda-a_{n+k,n+k})\det(\lambda{I}-A_{k})$ can be written as (up to a minus sign):
	\begin{multline} \label{multiline:lefthandside}
	\lambda^{n+k}+(c_1-S_1(p))\lambda^{n+k-1}+
	(c_2 -c_1 S_1(p) + S_2(p)) \lambda^{n+k-2} + 
	(c_3-c_2 S_1(p)+c_1 S_2(p) -S_3(p)) \lambda^{n+k-3} \\
		+ \ldots + (c_k - c_{k-1} S_1(p) + c_{k-2} S_2(p) - \ldots -S_k(p)) \lambda^{n} \\
		+ \ldots + (c_n -c_{n-1} S_1(p) + c_{n-2} S_2(p) - \ldots - c_{n-k} S_k (p)) \lambda^{k} \\
		+ (-c_n S_{1}(p) + \ldots + c_{n-k+2} S_{k-1}(p) - c_{n-k+1} S_k(p)) \lambda^{k-1} \\
		+ \ldots +
		(-c_n S_{k-2}(p) + c_{n-1} S_{k-1}(p) - c_{n-2} S_k(p)) \lambda^{2}
		+ (c_n S_{k-1}(p) - c_{n-1} S_k(p))\lambda - c_n S_k(p)
		\end{multline}
	where $S_1(p), \ldots, S_k(p)$ are the $k$ elementary symmetric polynomials in the parameter vector $p$. Here we assumed $n>k$, but an analogous formula follows for the case of $n \leq k$.
		
	We will refer to the non-constant coefficients of $\det(\lambda{I}-A)$ as $C_1, \ldots, C_{n+k}$.  Note that these are by assumption identifiable.	
		Likewise, 
		\begin{align}\label{align:righthandside}
		\det(\lambda{I}-A)_{1,{j+k}}=q d_1\lambda^{n-1} + q d_2 \lambda^{n-2} + \ldots + q d_{n-1} \lambda + q d_{n}.
		\end{align}
	We will refer to the coefficients of $\det(\lambda{I}-A)_{1,{j+k}}$ as $D_1, \ldots, D_n$.  Note that these are by assumption identifiable.
		We must now show that if the mapping given by $(c_1, \ldots, c_n, d_1, \ldots , d_n)$ has expected dimension, then the new mapping $(C_1, \ldots, C_{n+k}, D_1, \ldots, D_n)$ also has expected dimension.  Since the parameters in $p$ are roots of the polynomial $\det(\lambda{I}-A)$, then this means these parameters can be written in terms of the coefficients $(C_1, \ldots, C_{n+k})$, which are identifiable, and thus the parameters in $p$ are identifiable.  This means each of the elementary symmetric polynomials $S_1(p), \ldots, S_k(p)$ are identifiable. Since $C_1$ and $S_1(p)$ are identifiable from Equation \ref{multiline:lefthandside}, then $c_1$ can be recovered from the first coefficient from Equation \ref{multiline:lefthandside}.  Likewise, since $C_2$, $c_1$, $S_1(p)$, and $S_2(p)$ are identifiable, then $c_2$ can be recovered from the second coefficient of Equation \ref{multiline:lefthandside}.  Continuing in this fashion, we can recover $c_1, \ldots, c_n$, i.e. $c_1, \ldots, c_n$ are identifiable.  This means the dimension of the image of $(C_1, \ldots, C_{n+k})$ is $k$ more than the dimension of the image of $(c_1, \ldots, c_n)$.  Since the coefficients of Equation \ref{align:righthandside} are just the coefficients $d_1, \ldots, d_n$ scaled by $q$, which contains disjoint parameters from the parameters in the coefficients $c_1, \ldots, c_n, d_1, \ldots, d_n$, then the dimension of the image of $(D_1, \ldots, D_n)$ is the same as the dimension of the image of $(d_1, \ldots, d_n)$.  The parameters in $p$ do not appear in $(D_1, \ldots, D_n)$, thus combining the maps $(C_1, \ldots, C_{n+k})$ and $(D_1, \ldots, D_n)$ we have that the dimension of the image of the new map $(C_1, \ldots, C_{n+k}, D_1, \ldots, D_n)$ must be $k$ more than the dimension of the image of $(c_1, \ldots, c_n, d_1, \ldots , d_n)$ due to the identifiability of the parameters in $p$.  Thus the model $\wcm=(G',\{n+k\},\{j\},V \cup \{n+1, n+2, \ldots, n+k\})$ has dimension of the image of the coefficient map equal to $|E_G|+k+2$, i.e. $k$ more than $\cm=(G,\{1\},\{j\},V)$.
		

\end{proof}

\section{Classification of all identifiable models that are strongly input-output connected with 1 output or strongly connected with 1 input and leaks in input/output compartments} \label{section:classification}

The following Theorem \ref{thm:addleaks} gives necessary conditions for strongly input-output connected models with 1 output or strongly connected models with 1 input with leaks in input/output compartments to be identifiable, namely that they must be identifiable path/cycle models when all the leaks are added to the model.

\subsection{Necessary conditions for identifiability}

\begin{thm} [Adding Leaks] \label{thm:addleaks}
Let $\cm=(G,In,Out,L)$ represent a linear compartmental model with $|L|=|In \cup Out|$ and either $G$ strongly input-output connected and $|Out|=1$ or $G$ strongly connected and $|In|=1$ which we assume has expected dimension, i.e. has dimension of the image of the coefficient map equal to $|E|+|In \cup Out|$. Then, the corresponding model with an additional leak $\wcm=(G,In,Out,L \cup \{k\})$ also has expected dimension.  Thus the model $\wcm=(G,In,Out,Leak)$ where $L \subseteq Leak$ and $|Leak| \leq |V|$ also has expected dimension.  
\end{thm}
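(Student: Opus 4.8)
The plan is to realize both models as the same polynomial coefficient map restricted to nested linear subspaces of the full parameter space $\rr^{|V|+|E|}$, and then to squeeze the dimension of the image between an easy lower bound coming from this nesting and the upper bound of Lemma \ref{lemma:dimc}. Concretely, let $\bar c\colon \rr^{|V|+|E|}\to\rr^{k}$ denote the coefficient map of the full-leak model, regarded as a polynomial map in the $|E|$ off-diagonal entries $a_{ij}$ and the $|V|$ diagonal entries. Exactly as in Lemma \ref{lemma:algindset}, for any leak set $S\supseteq In\cup Out$ the model $(G,In,Out,S)$ is obtained by restricting $\bar c$ to the linear subspace
$$
\Lambda_S=\{A\in\rr^{|V|+|E|}: a_{ii}=-\sum_{j,j\neq i}a_{ji}\ \text{ for all } i\notin S\},
$$
and since the natural parametrization $\rr^{|E|+|S|}\to\Lambda_S$ is a linear isomorphism, the dimension of the image of the coefficient map of $(G,In,Out,S)$ equals $\dim\bar c(\Lambda_S)$.

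The key observation is that, assuming $k\notin L$ (otherwise there is nothing to prove), enlarging the leak set to $L\cup\{k\}$ removes precisely the single defining constraint $a_{kk}=-\sum_{j\neq k}a_{jk}$, so $\Lambda_L\subseteq\Lambda_{L\cup\{k\}}$. Taking images under $\bar c$ and passing to Zariski closures gives $\overline{\bar c(\Lambda_L)}\subseteq\overline{\bar c(\Lambda_{L\cup\{k\}})}$, hence
$$
\dim\bar c(\Lambda_L)\le\dim\bar c(\Lambda_{L\cup\{k\}}).
$$
By hypothesis the left-hand side is $|E|+|In\cup Out|$, so the image of the enlarged model has dimension at least $|E|+|In\cup Out|$. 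Conversely, $\bar c(\Lambda_{L\cup\{k\}})\subseteq\bar c(\rr^{|V|+|E|})$, and Lemma \ref{lemma:dimc} bounds $\dim\bar c(\rr^{|V|+|E|})$ above by $|E|+|In\cup Out|$ under either of our connectedness hypotheses; therefore $\dim\bar c(\Lambda_{L\cup\{k\}})\le|E|+|In\cup Out|$ as well. The two inequalities force equality, so $\wcm=(G,In,Out,L\cup\{k\})$ has expected dimension.

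For the final assertion I would iterate: listing the compartments of $V\setminus L$ as $k_1,\ldots,k_r$ and adjoining them one at a time, each step preserves expected dimension by the single-leak case just proved, because the hypothesis required at each step (the current leak set has image dimension $|E|+|In\cup Out|$) is exactly the conclusion of the previous step; note that this argument never uses the cardinality condition $|L|=|In\cup Out|$, only that the current image dimension is $|E|+|In\cup Out|$, which is why the induction propagates. After $r$ steps the leak set is all of $V$, giving $\wcm=(G,In,Out,V)$ with image of dimension $|E|+|In\cup Out|$. Unlike the leak-removal direction of Theorem \ref{thm:removeleaks}, there is no genuine obstacle here, since enlarging the domain can only enlarge the image; the only point that needs care is verifying that the upper bound of Lemma \ref{lemma:dimc} applies to a non-full leak set and not merely to $(G,In,Out,V)$, and this is immediate because $\bar c(\Lambda_S)$ is contained in the image over the whole space $\rr^{|V|+|E|}$.
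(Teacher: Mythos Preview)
Your argument is correct and is essentially the same as the paper's: both obtain the lower bound by observing that the smaller-leak model sits inside the larger one (you phrase this as the containment $\Lambda_L\subseteq\Lambda_{L\cup\{k\}}$, the paper as a Jacobian specialization setting the new leak parameters to zero), and both obtain the upper bound from Lemma~\ref{lemma:dimc}. One wrinkle worth fixing: you describe the identification of $(G,In,Out,S)$ with $\bar c|_{\Lambda_S}$ only ``for any leak set $S\supseteq In\cup Out$'', but the theorem assumes merely $|L|=|In\cup Out|$, not $In\cup Out\subseteq L$ (the paper explicitly remarks on this distinction after the proof); fortunately that identification, and hence your entire argument, holds for arbitrary $S$, so you should simply drop the containment hypothesis from your setup.
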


\begin{proof}
Suppose $\cm=(G,In,Out,L)$ is a linear compartmental model with either $G$ strongly input-output connected and $|Out|=1$ or $G$ strongly conencted and $|In|=1$ and $|L| = |In \cup Out|$ which we assume has expected dimension, i.e. has dimension of the image of the coefficient map equal to $|E|+|In \cup Out|$.

Note that because we assume that $\cm$ has expected dimension and $|L|$ leaks, this implies that the Jacobian of the coefficient map has the expected number of coefficients as the number of rows and $(|E|+|In \cup Out|)$ columns with full rank $|E|+|In \cup Out|$.  Note too that the addition of the $|V|-|L|$ parameters from the leaks being added to the model $\cm$ will not increase the number of coefficients in the resulting input-output equation, as the number of coefficients is the maximal amount by Theorem \ref{thm:numcoeffs}.

Therefore, the Jacobian of the coefficient map of the model $\wcm=(G,In,Out,V)$ generated by forcing every compartment in $\cm$ to have a leak, has the same number of rows but now $(|E|+|V|)$ columns.  The dimension of the image of the coefficient map is bounded above by the number of cycles and paths when there are $|V|$ leaks, which is $|E| + |In \cup Out| $.  Thus adding $|V|-|L|$ leaks to a $|L|$-leak model cannot increase the dimension of the image of the coefficient map above $|E| + |In \cup Out|$ if it has already achieved that dimension with $|L|$ leaks.  

Note then that if we consider the specialization generated by substituting zero for every added leak, and consider the submatrix of said Jacobian with expected number of coefficients as the number of rows and $(|E|+|In \cup Out|)$ columns generated by the $(|E|+|In \cup Out|)$ columns corresponding to the edges and leaks in $L$, we have exactly the Jacobian of the coefficient map of $\cm$, which we know is full rank.  Therefore, we have that the Jacobian of the coefficient map of $\wcm$ is also full rank, implying that the model has expected dimension.  

The same argument applies for adding any number of leaks up to $|V|$ total leaks.
\end{proof}

\begin{rmk} We note that while Theorem \ref{thm:removeleaks} and Theorem \ref{thm:addleaks} assume opposite operations of adding or subtracting leaks, we have the condition in Theorem \ref{thm:removeleaks} that $In \cup Out \subseteq Leak$, while in Theorem \ref{thm:addleaks} only $|Leak|=|In \cup Out|$ is assumed, i.e. only the number and not the placement of leaks matters.
\end{rmk}

\subsection{Necessary and sufficient conditions for identifiability}

Combining Theorem \ref{thm:addleaks} with Theorem \ref{thm:removeleaks}, we now come to the main result of this section and obtain the following necessary and sufficient conditions for identifiable models:

\begin{cor} Let $\cm=(G,\{i\},\{i\},\{k\})$ represent a linear compartmental model with $G$ strongly connected and let $\wcm=(G,\{i\},\{i\},V)$ be the corresponding model with a leak in every compartment.  $\cm$ is locally identifiable if and only if $\wcm$ is an identifiable cycle model.  
\end{cor}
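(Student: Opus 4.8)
The plan is to reduce both sides of the equivalence to a single statement about expected dimension and then feed that into Theorems \ref{thm:addleaks} and \ref{thm:removeleaks}. First I would record that, because $In = Out = \{i\}$, we have $|In \cup Out| = 1$ and the single-leak model $\cm = (G,\{i\},\{i\},\{k\})$ has exactly $|E| + 1$ parameters. By the Jacobian rank criterion (Proposition 2 of \cite{MeshkatSullivantEisenberg}), $\cm$ is locally identifiable precisely when the image of its coefficient map has dimension $|E| + 1 = |E| + |In \cup Out|$, i.e.\ precisely when $\cm$ has expected dimension. On the other side I would argue that $\wcm = (G,\{i\},\{i\},V)$ is an identifiable cycle model if and only if it has expected dimension: identifiability of all $|E|+1$ independent cycles forces $\dim(\im c) \geq |E|+1$, which with the upper bound of Lemma \ref{lemma:dimc} gives equality, and the converse is Theorem \ref{thm:idpathcycle} together with the observation that, when $In = Out$, every path from input to output is a cycle, so an identifiable path/cycle model is exactly an identifiable cycle model. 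Thus the corollary becomes the statement that $\cm$ has expected dimension if and only if $\wcm$ does.

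The forward direction is then immediate: assuming $\cm$ has expected dimension, I would apply Theorem \ref{thm:addleaks} (valid here since $G$ is strongly connected, $|In|=1$, and $|L| = 1 = |In \cup Out|$) to conclude that the full-leak model $\wcm$ also has expected dimension, hence is an identifiable cycle model.

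For the reverse direction I would split on the location of the leak. If $k = i$, then $In \cup Out = \{i\} \subseteq \{k\}$, and Theorem \ref{thm:removeleaks} directly yields that $\cm$ is locally identifiable. The hard part is a leak placed outside the input/output compartment, $k \neq i$: here Theorem \ref{thm:removeleaks} does not apply, because it requires $In \cup Out \subseteq L$. I expect this to be the main obstacle, and I would handle it by invoking the result of \cite{MeshkatSullivantEisenberg} that removing all but one leak from an identifiable cycle model always yields a locally identifiable model. The reason this placement-independence holds at all is special to $In = Out$: for $In \neq Out$ the position of a single leak genuinely changes identifiability (as in Example \ref{ex:leakplacement}), so the argument must lean on the cycle structure of identifiable cycle models from \cite{MeshkatSullivantEisenberg} rather than on Theorem \ref{thm:removeleaks} alone.
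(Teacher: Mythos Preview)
Your proposal is correct and matches the paper's approach: the paper's proof simply says the corollary follows by combining Theorem \ref{thm:addleaks} (for the forward direction) with Theorem 1 of \cite{MeshkatSullivantEisenberg} (stated later in the paper as Theorem \ref{thm:mse}) for the reverse direction. Your case split on whether $k=i$ is unnecessary, since Theorem 1 of \cite{MeshkatSullivantEisenberg} already covers every single-leak placement; the paper invokes it uniformly without distinguishing $k=i$ from $k\neq i$.
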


\begin{proof} This follows from combining Theorem \ref{thm:addleaks} and Theorem 1 of \cite{MeshkatSullivantEisenberg} (also written as Theorem \ref{thm:mse}).
\end{proof}

\begin{cor} \label{cor:necandsuff} Let $\cm=(G,In,Out,L)$ represent a linear compartmental model with and $L=In \cup Out$ and assume that either $G$ is strongly input-output connected and $|Out|=1$ or $G$ is strongly connected and $|In|=1$. Let $\wcm=(G,In,Out,V)$ be the corresponding model with a leak in every compartment.  $\cm$ is locally identifiable if and only if $\wcm$ is an identifiable path/cycle model.  
\end{cor}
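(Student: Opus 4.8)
The plan is to prove the two implications separately, assembling results already established in the paper; no new computation is required, so this is essentially a bookkeeping chain through Theorems \ref{thm:removeleaks}, \ref{thm:addleaks}, and \ref{thm:idpathcycle}.

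For the reverse direction (if $\wcm$ is an identifiable path/cycle model then $\cm$ is locally identifiable) I would invoke Theorem \ref{thm:removeleaks} directly. The everywhere-leak model $\wcm=(G,In,Out,V)$ plays the role of the identifiable path/cycle model in that theorem, and its hypotheses (either $G$ strongly input-output connected with $|Out|=1$, or $G$ strongly connected with $|In|=1$) are exactly the standing assumptions of the corollary. Since $L=In\cup Out$ satisfies $In\cup Out\subseteq L$ with $|L|=|In\cup Out|$, the ``in particular'' clause of Theorem \ref{thm:removeleaks} yields that the reduced-leak model $\cm=(G,In,Out,In\cup Out)$ is locally identifiable.

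For the forward direction (if $\cm$ is locally identifiable then $\wcm$ is an identifiable path/cycle model) the first step is to translate local identifiability into a statement about the dimension of the image of the coefficient map. By the Jacobian rank criterion (Proposition 2 of \cite{MeshkatSullivantEisenberg}), $\cm$ being generically locally identifiable means the Jacobian of its coefficient map $c\colon\rr^{|E|+|L|}\to\rr^{k}$ has rank $|E|+|L|=|E|+|In\cup Out|$. As the domain has dimension $|E|+|In\cup Out|$ and the Jacobian attains full column rank there, the image of $c$ has dimension $|E|+|In\cup Out|$; that is, $\cm$ has expected dimension in the sense demanded by Theorem \ref{thm:addleaks}. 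Applying Theorem \ref{thm:addleaks} (whose hypothesis $|L|=|In\cup Out|$ is satisfied), the everywhere-leak model $\wcm=(G,In,Out,V)$ also has dimension of the image of the coefficient map equal to $|E|+|In\cup Out|$. Finally, Theorem \ref{thm:idpathcycle} applies to $\wcm$: the connectedness hypotheses carry over unchanged, $\wcm$ has leaks in every compartment so the path/cycle factorization of Proposition \ref{prop:factor} is available, and the image of its coefficient map has dimension exactly $|E|+|In\cup Out|$; hence $\wcm$ is an identifiable path/cycle model.

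The only point that warrants explicit care is the opening move of the forward direction, namely verifying that ``locally identifiable'' for the reduced-leak model $\cm$ genuinely supplies the ``expected dimension, i.e. dimension of the image equal to $|E|+|In\cup Out|$'' hypothesis of Theorem \ref{thm:addleaks}. This holds because local identifiability forces the Jacobian to have full column rank $|E|+|L|$, and with $|L|=|In\cup Out|$ this full-rank condition is precisely the expected-dimension condition; I would state this equivalence in one line rather than leave it implicit. Beyond that there is no analytic obstacle, so the corollary follows by combining the three cited theorems exactly as the analogous same-compartment corollary combines Theorem \ref{thm:addleaks} with Theorem 1 of \cite{MeshkatSullivantEisenberg}.
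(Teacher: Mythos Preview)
Your proposal is correct and follows the same approach as the paper, which simply states that the result follows from combining Theorem \ref{thm:addleaks} and Theorem \ref{thm:removeleaks}. You have helpfully made explicit the two bookkeeping steps the paper leaves implicit: that local identifiability of $\cm$ gives dimension $|E|+|In\cup Out|$ via the Jacobian criterion, and that expected dimension of $\wcm$ yields the identifiable path/cycle conclusion via Theorem \ref{thm:idpathcycle}.
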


\begin{proof} This follow from combining Theorem \ref{thm:addleaks} and Theorem \ref{thm:removeleaks}.
\end{proof}

\begin{rmk} Corollary \ref{cor:necandsuff} gives us a complete classification of all identifiable models that are strongly input-output connected with 1 output or strongly connected with 1 input and leaks in input/output compartments.  We note that this class of models has the very special property of being \textit{dimension-preserving} when leaks are added or subtracted from non-input/output compartments, up to a point.  To demonstrate that this special dimension-preserving property when removing leaks is not always the case, we revisit Example \ref{ex:leakplacement}.
\end{rmk} 

\begin{ex} Recall Example \ref{ex:leakplacement} where we had the model $\cm=(G,\{1\},\{2\},L)$ where $|L|=|In \cup Out|=2$ and $G$ is given by the edges $\{ 1 \rightarrow 2, 1 \rightarrow 3, 3 \rightarrow 1, 1 \rightarrow 4, 4 \rightarrow 1 \}$.  The identifiable models are the ones where $L = \{2,4\}, \{2,3\},\{1,2\}$ and the unidentifiable models have $L=\{3,4\},\{1,4\},\{1,3\}$, so removing leaks from output compartments is not dimension-preserving for this example.
\end{ex} 

In the next section, we give a conjecture about removing leaks from non-input/output compartments in the general output connectable case for models with one output.

\section{Other expected dimension results} \label{section:othered}

We first show that there are at most $|E|+|In \cup Out|$ independent paths and cycles appearing in the coefficient map $c$ if we relax the condition of strongly input-output connected to \textit{output connectable} instead:

\begin{prop}\label{prop:atmost} Let $\cm=(G,In,Out,V)$ represent a linear compartmental model with $G$ output connectable.  Assume that $|Out|=1$.  Then there are at most $|E|+|In \cup Out|$ independent paths and cycles in the coefficient map $c$.
\end{prop}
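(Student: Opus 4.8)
The plan is to compare the \emph{directed} objects that actually appear in the coefficient map against the \emph{undirected} objects already counted in Lemma \ref{lemma:numpathscycles}, and to exploit the elementary fact that the span of the directed cycles can only be smaller. Since $G$ is output connectable, Proposition \ref{prop:factor} guarantees that $c$ factors through the self-cycles, the directed cycles, and the directed paths from input to output. Hence the number of independent paths and cycles appearing in $c$ equals the rank of the matrix $B'$ whose columns are the indicator vectors in $\rr^{|E|+|V|}$ of all self-cycles, all directed cycles, and all directed paths from input to output.

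First I would set up the undirected comparison. Let $B$ be the matrix whose columns are the indicator vectors of all self-cycles, all \emph{undirected} cycles, and all directed paths from input to output. Since $G$ is output connectable with a single output, every vertex has a directed path to the output, so the underlying undirected graph is connected; consequently the undirected cycle indicator vectors span $\ker E(G)$ (the cycle space), exactly as used in the proof of Lemma \ref{lemma:numpathscycles}. The crucial step is the inclusion of column spaces $\mathrm{col}(B') \subseteq \mathrm{col}(B)$: the self-cycle columns and the directed-path columns belong to both matrices, while each directed cycle indicator vector lies in $\ker E(G)$ (at every vertex of a directed cycle the indegree within the cycle equals the outdegree, so $E(G)$ annihilates its indicator vector), and $\ker E(G)$ is spanned by the undirected cycle columns of $B$.

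From the inclusion of column spaces I would conclude $\mathrm{rank}(B') \le \mathrm{rank}(B)$, and by Lemma \ref{lemma:numpathscycles} applied in the output-connectable single-output case, $\mathrm{rank}(B) = |E|+|In \cup Out|$. Therefore the number of independent paths and cycles in the coefficient map is at most $|E|+|In \cup Out|$, as claimed.

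The main subtlety, and the reason the conclusion is only an inequality, is that $\mathrm{rank}(B') < \mathrm{rank}(B)$ can occur precisely when the directed cycles fail to span all of $\ker E(G)$; this is exactly the phenomenon that separates output connectable from strongly input-output connected (compare Corollary \ref{cor:numpathscycles} and Proposition \ref{prop:undirected}, where strong input-output connectedness was what allowed one to pass from undirected to directed cycles and obtain equality). Accordingly, the care required is to assert only the inclusion of spans rather than equality, and to verify cleanly that the directed cycle indicator vectors genuinely sit inside the undirected cycle space, i.e. that $E(G)$ kills them; both are routine once the connectedness of $G$ from output connectability is noted.
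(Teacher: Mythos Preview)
Your argument is correct and follows essentially the same route as the paper: invoke Proposition~\ref{prop:factor} to factor $c$ through directed cycles and paths, then bound the span of the directed objects by the span of the undirected ones counted in Lemma~\ref{lemma:numpathscycles}. Your write-up is in fact more explicit than the paper's, since you spell out the inclusion $\mathrm{col}(B')\subseteq\mathrm{col}(B)$ via the observation that directed cycle indicators lie in $\ker E(G)$, and you note why output connectability to a single output forces connectedness; the paper's version leaves these steps implicit.
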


\begin{proof} If $G$ is strongly input-output connected (or strongly connected) then we have already shown in Lemma \ref{lemma:dimc} that the coefficient map factors through $|E|+|In \cup Out|$ independent paths and cycles.  If $G$ is output connectable but not strongly input-output connected, then there may be fewer than $|E|+|In \cup Out|$ independent directed paths and directed cycles because there are $|E|-|V|+|In \cup Out|$ independent directed paths and undirected cycles by Lemma \ref{lemma:numpathscycles}. Since the coefficient map factors over the directed paths and directed cycles, then there are at most $|E|+|In \cup Out|$ independent paths and cycles in the coefficient map $c$. 
\end{proof}

This means the expected dimension is now the number of independent directed paths and directed cycles, which may be less than $|E|+|In \cup Out|$.

We can relax the connectedness conditions in Theorem \ref{thm:addleaks}  to \textit{output connectable} instead and still obtain statements about expected dimension, although now the models with a full set of leaks are \textbf{not} identifiable path/cycle models. 

\begin{thm} \label{thm:othered}
Let $\cm=(G,In,Out,L)$ represent a linear compartmental model with $G$ output connectable and $|L| = |In \cup Out|$ and $|Out|=1$ which we assume has dimension of the image of the coefficient map equal to $|E|+|In \cup Out|$.  Then, the corresponding model with a leak in every compartment $\wcm=(G,In,Out,V)$ also has dimension of the image of the coefficient map equal to $|E|+|In \cup Out|$.
\end{thm}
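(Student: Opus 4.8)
The plan is to mirror the proof of Theorem \ref{thm:addleaks}, sandwiching the dimension of the image of the coefficient map of $\wcm$ between a lower bound coming from the hypothesis on $\cm$ and an upper bound coming from output connectability. Write $c_{\cm}$ and $c_{\wcm}$ for the coefficient maps of the two models, regarded as polynomial maps into a common coefficient space; the coefficient positions are fixed by the degrees in the input-output equation of Theorem \ref{thm:ioscc}, and by Theorem \ref{thm:numcoeffs} their number is already maximal and does not grow when leaks are added.

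First I would establish the lower bound. Passing from the full-leak model $\wcm$ to $\cm$ amounts to setting the leak parameters $a_{0i}$ to zero for every $i \notin L$; equivalently, $c_{\cm}$ is the restriction of $c_{\wcm}$ to the coordinate subspace cut out by those vanishing leaks. Hence the image of $c_{\cm}$ is contained in the image of $c_{\wcm}$, so the dimension of the image of $c_{\wcm}$ is at least that of $c_{\cm}$, namely $|E|+|In \cup Out|$ by hypothesis. (In the Jacobian language of Theorem \ref{thm:addleaks}, this is the observation that the Jacobian of $c_{\cm}$ is a column submatrix of the Jacobian of $c_{\wcm}$ evaluated at a point where the added leaks vanish, so the generic rank of the latter is at least that of the former.)

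Next I would establish the upper bound, which is the only step where the connectedness hypothesis enters and the sole place where this argument departs from Theorem \ref{thm:addleaks}. Since the coefficient map factors through the path/cycle map by Proposition \ref{prop:factor}, and since $G$ is output connectable with $|Out|=1$, Proposition \ref{prop:atmost} bounds the number of independent directed paths and cycles appearing in $c_{\wcm}$ by $|E|+|In \cup Out|$. Therefore the dimension of the image of $c_{\wcm}$ is at most $|E|+|In \cup Out|$, and combining this with the lower bound forces equality, which is the claim.

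The main subtlety, rather than a genuine obstacle, is that Proposition \ref{prop:atmost} supplies only an inequality: in the merely output connectable case the true number of independent directed paths and cycles may be strictly smaller than $|E|+|In \cup Out|$, so the upper bound is not automatically tight as it was under strong input-output connectivity in Lemma \ref{lemma:dimc}. The hypothesis that $\cm$ already attains dimension $|E|+|In \cup Out|$ is exactly what excludes this degeneracy and pins the count down, which is why the statement must be phrased conditionally. I would also flag, in line with the surrounding discussion, that unlike Theorem \ref{thm:addleaks} the conclusion does not upgrade to ``$\wcm$ is an identifiable path/cycle model'': attaining dimension $|E|+|In \cup Out|$ no longer guarantees that every individual path and cycle is recoverable, only that the image has the stated dimension.
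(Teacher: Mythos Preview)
Your proposal is correct and follows essentially the same approach as the paper, which simply states that the proof mirrors that of Theorem \ref{thm:addleaks}. You have made explicit the one necessary modification: the upper bound on the dimension now comes from Proposition \ref{prop:atmost} (the output connectable case) rather than from Lemma \ref{lemma:dimc}, while the lower bound via specialization/restriction of the Jacobian is identical to the original argument.
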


\begin{proof} The proofs mirrors the one in Theorem \ref{thm:addleaks}.
\end{proof}

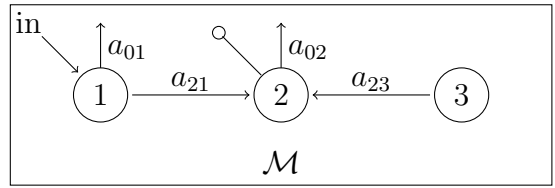
\begin{figure}[H]
\begin{center}
	\begin{tikzpicture}[scale=1.2]
 	\draw (0,0) circle (0.3);
 	\draw (2,0) circle (0.3);
	\draw (4,0) circle (0.3);
    	\node[] at (0, 0) {1};
    	\node[] at (2, 0) {2};
    	\node[] at (4, 0) {$3$};
	 \draw[->] (0.35, 0) -- (1.65, 0);
 	 \draw[<-] (2.35,0) -- (3.65,0);
   	 \node[] at (1, 0.15) {$a_{21}$};
	 \node[] at (3,0.15) {$a_{23}$};
	\draw (1.31,.69) circle (0.07);	
	\draw[-] (1.35, .65 ) -- (1.78, .22);	
	 \draw[->] (-.65, .65) -- (-.25, .25);	
   	 \node[] at (-.8,.8) {in};
	 \draw[->] (0,.3) -- (0, .8);	
   	 \node[] at (.3, .5) {$a_{01}$};
   	 \draw[->] (2,.3) -- (2,.8);
   	 \node[] at (2.3,.5) {$a_{02}$};
%
%
\draw (-1,-1) rectangle (5, 1);
    	\node[] at (2, -.75) {$\cm$};
%

\end{tikzpicture}
\end{center}
\caption{The graph corresponding to model $\cm$ from Example \ref{ex:notidpathcycle}.}
\end{figure}

\begin{ex} \label{ex:notidpathcycle} The model $\cm=(G,\{1\},\{2\},\{1,2\})$ where $G$ is the graph given by $\{ 1 \rightarrow 2, 3 \rightarrow 2 \}$ is output connectable and has dimension of the image of the coefficient map equal to $|E|+2=4$, thus it is locally identifiable.  By Theorem \ref{thm:othered}, the model $\wcm=(G,\{1\},\{2\},V)$ also has dimension of the image of the coefficient map equal to $|E|+2=4$.  Thus the identifiable functions are $a_{11}, a_{22}, a_{33}, a_{21}$.  Note that it is \textit{not} an identifiable path/cycle model because the parameter $a_{23}$ does not appear in the coefficient map (as it is not strongly input-output connected).  
\end{ex}

This result shows that if a model has its dimension of the image of the coefficient map is equal to $|E|+|In \cup Out|$, then adding leaks alone maintains the dimension of the image of the coefficient map.  This result is perhaps more useful for its contrapositive, i.e. if a model with leaks from every compartment does not have  dimension $|E|+|In \cup Out|$ for $c$, then no amount of removing leaks up to a certain point ($|L|=|In \cup Out|$) can attain identifiability.  

\begin{cor} \label{cor:notid} Let $\wcm=(G,In,Out,V)$ represent a linear compartmental model with $G$ output connectable and $|Out|=1$ which does \textbf{not} have the dimension of the image of the coefficient map equal to $|E|+|In \cup Out|$. Then, the corresponding model $\cm=(G,In,Out,L)$ with $|L|=|In \cup Out|$ is \textbf{not} locally identifiable.
\end{cor}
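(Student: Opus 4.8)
The plan is to read Corollary \ref{cor:notid} as the contrapositive of Theorem \ref{thm:othered}, coupled with the Jacobian criterion for generic local identifiability. The substance of the statement has already been established; the corollary is a short logical repackaging.

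First I would invoke the contrapositive of Theorem \ref{thm:othered}. That theorem asserts that if the reduced-leak model $\cm=(G,In,Out,L)$ with $|L|=|In\cup Out|$ has dimension of the image of its coefficient map equal to $|E|+|In\cup Out|$, then the full-leak model $\wcm=(G,In,Out,V)$ attains that same dimension. Taking the contrapositive: since by hypothesis $\wcm$ does \textbf{not} have dimension of the image of $c$ equal to $|E|+|In\cup Out|$, the reduced-leak model $\cm$ cannot have dimension of the image of $c$ equal to $|E|+|In\cup Out|$ either. (The output connectable and $|Out|=1$ hypotheses of Theorem \ref{thm:othered} are exactly those imposed in the corollary, so the theorem applies verbatim.)

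Next I would convert this dimension statement into the failure of local identifiability. Because $|L|=|In\cup Out|$, the coefficient map of $\cm$ is a map $c\colon \rr^{|E|+|In\cup Out|}\to\rr^{k}$, so the number of free parameters is exactly $|E|+|In\cup Out|$. By the Jacobian criterion (Proposition 2 of \cite{MeshkatSullivantEisenberg}), $\cm$ is generically locally identifiable if and only if the generic rank of the Jacobian of $c$ equals the parameter count $|E|+|In\cup Out|$. This generic rank is precisely the dimension of the image of $c$, which is automatically bounded above by $|E|+|In\cup Out|$, the number of columns of the Jacobian. Combining this bound with the previous step, namely that the dimension is \emph{not} equal to $|E|+|In\cup Out|$, forces the dimension, and hence the generic Jacobian rank, to be strictly less than $|E|+|In\cup Out|$. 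Therefore $\cm$ fails the Jacobian criterion and is not locally identifiable.

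I expect no genuine obstacle here, since the hard work lies entirely in the "adding leaks preserves dimension" direction proved in Theorem \ref{thm:othered}. The only points worth checking carefully are that the generic rank of the Jacobian coincides with the dimension of the image of the coefficient map, and that the parameter count for the reduced-leak model matches the threshold $|E|+|In\cup Out|$ against which full rank is tested; both become immediate once one observes that the constraint $|L|=|In\cup Out|$ pins the domain of $c$ to dimension exactly $|E|+|In\cup Out|$.
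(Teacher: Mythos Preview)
Your proposal is correct and matches the paper's own reasoning: the paper presents this corollary without a formal proof, remarking explicitly that it is the contrapositive of Theorem~\ref{thm:othered}, and the translation to non-identifiability via the parameter count $|E|+|In\cup Out|$ and the Jacobian criterion is exactly what you spell out.
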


\begin{figure}[H]
\begin{center}
	\begin{tikzpicture}[scale=1.2]
 	\draw (0,0) circle (0.3);
 	\draw (2,0) circle (0.3);
	\draw (1,-1.5) circle (0.3);
    	\node[] at (0, 0) {1};
    	\node[] at (2, 0) {2};
    	\node[] at (1, -1.5) {$3$};
	 \draw[->] (0.35, 0) -- (1.65, 0);
 	 \draw[->] (1.25,-1.25) -- (1.85,-.35);
 	 \draw[->] (.75,-1.25) -- (.15,-.35);
   	 \node[] at (1, 0.15) {$a_{21}$};
	 \node[] at (1.75,-1) {$a_{23}$};
	 \node[] at (.25,-1) {$a_{13}$};
	\draw (1.31,.69) circle (0.07);	
	\draw[-] (1.35, .65 ) -- (1.78, .22);	
	 \draw[->] (-.65, .65) -- (-.25, .25);	
   	 \node[] at (-.8,.8) {in};
	 \draw[->] (0,.3) -- (0, .8);	
   	 \node[] at (.3, .5) {$a_{01}$};
   	 \draw[->] (2,.3) -- (2,.8);
   	 \node[] at (2.3,.5) {$a_{02}$};
   	 \draw[->] (1,-1.8) -- (1,-2.3);
   	 \node[] at (1.3,-2) {$a_{03}$};
%
%
\draw (-1,-3) rectangle (2.75, 1);
    	\node[] at (2, -2.5) {$\cm$};
%

\end{tikzpicture}
\end{center}
\caption{The graph corresponding to model $\cm$ from Example \ref{ex:notid}. }
\end{figure}
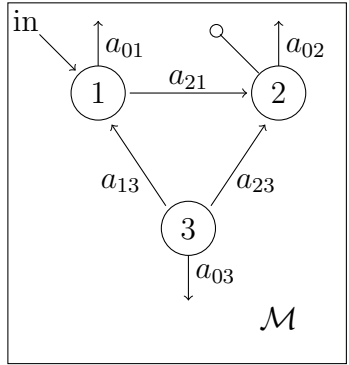

\begin{ex} \label{ex:notid} The model $\cm=(G,\{1\},\{2\},V)$ where $G$ is the graph given by $\{ 1 \rightarrow 2, 3 \rightarrow 2, 3 \rightarrow 1 \}$ is output connectable and has dimension of the image of the coefficient map \textit{not} equal to $|E|+2=5$, but equal to $4$ instead.  By Corollary \ref{cor:notid}, the model $\wcm=(G,\{1\},\{2\},L)$ where $|L|=2$ is thus \textit{not} locally identifiable.
\end{ex}

These results also give us insight into models that have dimension of the image of the coefficient map equal to $|E|+|In \cup Out|$ but are not strongly input-output connected.  We can show that the self-cycles are always identifiable:

\begin{thm} \label{thm:selfcycles} Let $\cm=(G,In,Out,V)$ represent a linear compartmental model with $G$ output connectable and $|Out|=1$.  If $\cm$ has dimension of the image of the coefficient map equal to $|E|+|In \cup Out|$, then the self-cycles $a_{11},...,a_{nn}$ are locally identifiable.
\end{thm}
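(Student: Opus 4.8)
The plan is to adapt the argument of Theorem \ref{thm:idpathcycle} to the merely output-connectable setting. The key point is that attaining the maximal possible dimension $|E|+|In \cup Out|$ forces the path/cycle map to be locally recoverable from the coefficient map, and the self-cycles are always among its coordinates.

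First I would recall from Proposition \ref{prop:factor} that, since $Leak=V$ and $G$ is output connectable, the coefficient map factors as $c = \psi \circ \pi$, where $\pi \colon \rr^{|V|+|E|} \to \rr^{|\calp|}$ is the path/cycle map and $\psi$ assembles the coefficients out of the paths and cycles. The self-cycles $a_{11}, \ldots, a_{nn}$, being $1$-cycles, are themselves coordinates of $\pi$, and they do appear in $c$ (they occur in the coefficients of the characteristic polynomial on the left-hand side of the input-output equation). Thus it suffices to show that $\pi$ itself, and in particular each of these coordinates, can be recovered up to finite ambiguity from $c$.

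Next I would combine the dimension hypothesis with Proposition \ref{prop:atmost}. Because $c = \psi \circ \pi$ we always have $\dim(\im c) \le \dim(\im \pi)$, and by Proposition \ref{prop:atmost} the map $\pi$ factors through at most $|E|+|In \cup Out|$ independent paths and cycles, so $\dim(\im \pi) \le |E|+|In \cup Out|$. The hypothesis $\dim(\im c) = |E|+|In \cup Out|$ then forces equality throughout, so $\dim(\im \pi) = \dim(\im c) = |E|+|In \cup Out|$. In particular there is a set of $|E|+|In \cup Out|$ algebraically independent paths and cycles; letting $\pi'$ be the corresponding restriction of $\pi$, this $\pi'$ is dominant onto an open subset of $\rr^{|E|+|In \cup Out|}$, and we may write $c = \psi' \circ \pi'$.

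Finally, since $\dim(\im c) = |E|+|In \cup Out|$ equals the dimension of the source of $\psi'$, the map $\psi'$ is a dominant morphism between irreducible varieties of equal dimension, hence generically finite-to-one; equivalently $\psi'$ is locally invertible with a finitely-multivalued inverse $\phi$, exactly as in the proof of Theorem \ref{thm:idpathcycle}. Therefore $\pi' = \phi \circ c$, so the chosen independent paths and cycles are locally identifiable, and since they form a maximal algebraically independent set, every remaining path or cycle, and in particular every self-cycle $a_{ii}$ not already among the chosen set, is an algebraic function of them and hence also locally identifiable. The one step to check with care is the passage from $\dim(\im c) = \dim(\im \pi)$ to the local invertibility of $\psi'$, i.e. the generic finiteness of the induced dominant map; once this is in hand, the fact that every self-cycle is a coordinate of $\pi$ closes the argument.
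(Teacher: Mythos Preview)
Your proposal is correct and follows essentially the same approach as the paper: factor $c=\psi\circ\pi$ via Proposition~\ref{prop:factor}, use Proposition~\ref{prop:atmost} together with the hypothesis to force $\dim(\im\pi)=\dim(\im c)=|E|+|In\cup Out|$, and then invert $\psi$ locally as in the proof of Theorem~\ref{thm:idpathcycle}. The paper's own proof is a single sentence (``Since the coefficient map always factors over $a_{11},\ldots,a_{nn}$, this means the self-cycles are locally identifiable''), so your write-up is really an explicit unpacking of that sentence; one minor simplification is that the $|V|$ self-cycles are independent parameters disjoint from all other monomial cycles and paths, so they automatically lie in any maximal independent set $\pi'$ and you need not invoke the extra step of passing to algebraic functions of $\pi'$.
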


\begin{proof} Since the coefficient map always factors over $a_{11}, ...,a_{nn}$, this means the self-cycles are locally identifiable.
\end{proof}

Finally, we give a conjecture on removing leaks from non-input/output compartments for output connectable models and prove this conjecture in a special case:

\begin{conj} \label{conj:dimpreserving}
Let $\cm=(G,In,Out,V)$ represent a linear compartmental model. Assume that $G$ is output connectable and $|Out|=1$.  Assume the dimension of the image of the coefficient map is $k$.  Then, the corresponding model $\wcm=(G,In,Out,L)$ where $In \cup Out \subseteq L$ also has dimension of the image of its coefficient map as $k$.
\end{conj}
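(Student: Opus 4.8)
The plan is to reduce the conjecture to a transversality statement about the Jacobian of the coefficient map, and then to attempt the flow-balancing argument used in the proof of Lemma \ref{lemma:algindset}. First I would reduce to the extremal leak set $L = In \cup Out$. Since $In\cup Out\subseteq L\subseteq V$, the leak removals correspond to a chain of linear subspaces $\Lambda_{In\cup Out}\subseteq\Lambda_L\subseteq\rr^{|V|+|E|}$, where $\Lambda_S = \{A : a_{ii} = -\sum_{j:i\to j}a_{ji}\text{ for }i\notin S\}$ as in Lemma \ref{lemma:algindset}, and $c$ restricted to $\Lambda_S$ is the coefficient map of the $S$-leak model. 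Because restricting the domain cannot increase the dimension of the image, $\dim\im(c|_{\Lambda_{In\cup Out}})\le\dim\im(c|_{\Lambda_L})\le\dim\im(c|_{\rr^{|V|+|E|}})=k$. Hence it suffices to prove the case $L=In\cup Out$: if the smallest restriction already attains dimension $k$, every intermediate leak set is squeezed to dimension $k$ as well.

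Next I would translate ``dimension $k$ is preserved'' into a statement about the generic Jacobian $J$ of $c$ on $\rr^{|V|+|E|}$, which has rank $k$. Writing $R = \operatorname{rowspace}(J)$ (so $\dim R = k$) and noting that $\Lambda_{In\cup Out}^{\perp}=\operatorname{span}\{r_i : i\notin In\cup Out\}$, where $r_i = e_{ii}+\sum_{j:i\to j}e_{(j,i)}$ encodes the leak-removal relation at $i$, a short linear-algebra computation shows that the restricted map keeps rank $k$ if and only if $R\cap\operatorname{span}\{r_i:i\notin In\cup Out\}=0$. By Proposition \ref{prop:factor} the map $c$ factors as $c=\psi\circ\pi$ through the path/cycle map, so $R\subseteq\operatorname{rowspace}(J\pi)=:R_\pi$; evaluating at the all-ones point as in Lemma \ref{lemma:algindset} identifies $R_\pi$ with the span of the self-cycle directions together with the directed path/cycle indicator vectors, i.e.\ with the space of edge-flows conserved at every vertex outside $In\cup Out$, augmented by all diagonal directions. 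Thus it would be enough to prove the stronger statement $R_\pi\cap\operatorname{span}\{r_i:i\notin In\cup Out\}=0$.

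To attempt this I would run the flow argument of Lemma \ref{lemma:algindset}. A nonzero vector $w=\sum_{i\notin In\cup Out}\beta_i r_i$ assigns the common weight $\beta_i$ to every outgoing edge of each interior vertex $i$ and weight $0$ to every outgoing edge of $In\cup Out$; if moreover $w\in R_\pi$, its edge-part is a flow conserved away from $In\cup Out$. Picking $\beta_{i_0}\ne0$ and using conservation at $i_0$ to produce an incoming edge of nonzero weight, one traces a backward walk through interior vertices carrying nonzero $\beta$, which — in the strongly input-output connected setting — must reach a vertex of $In\cup Out$ and contradict the vanishing of $w$ on the outgoing edges there.

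The main obstacle is exactly that this backward-tracing argument can fail when $G$ is only output connectable rather than strongly input-output connected. The interior vertices (those outside $In\cup Out$ and carrying no leak) may support a directed cycle that never meets $In\cup Out$, and because the flow weights are signed, conservation at a vertex can be satisfied by cancellation among several incoming edges rather than forcing the walk onward toward an input/output compartment. Consequently $R_\pi$ may genuinely meet $\operatorname{span}\{r_i:i\notin In\cup Out\}$ nontrivially, and one is forced to use the finer fact that it is $R$, not all of $R_\pi$, that must avoid the intersection: the hypothesis $\dim\im(c)=k$ pins $R$ down to a particular $k$-dimensional subspace that may miss the bad intersection even when $R_\pi$ does not. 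I therefore expect the conjecture to be provable in full under the extra hypothesis that the subgraph induced on $V\setminus(In\cup Out)$ is acyclic, so that no interior cycle exists and the backward walk terminates in $In\cup Out$; this acyclic case is the natural special case to establish first, while the general case requires controlling the interaction between the specific subspace $R$ and the interior cycle space, which is where the real difficulty lies.
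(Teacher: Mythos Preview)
This statement is recorded in the paper as a \emph{conjecture} and is not proved there in general; the paper establishes only the special case $k=|E|+|In\cup Out|$ (Theorem~\ref{thm:removeleaksoutputconnectable} via Lemma~\ref{lemma:algindsetoutputconnectable}), using precisely the reduction to $\pi$ and the flow-balancing argument you outline. Your chain-of-subspaces reduction to $L=In\cup Out$, the linear-algebra translation to $R\cap\Lambda^\perp=0$, and the containment $R\subseteq R_\pi$ are all correct and match the paper's reasoning. The reason the paper's argument succeeds only when $k=|E|+|In\cup Out|$ is exactly the one you isolate: in that case $\dim R=\dim R_\pi$ forces $R=R_\pi$, so the tractable condition $R_\pi\cap\Lambda^\perp=0$ coincides with the one actually needed, whereas for smaller $k$ one has $R\subsetneq R_\pi$ and $R_\pi\cap\Lambda^\perp$ may be nonzero.

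Your proposed special case (the induced subgraph on $V\setminus(In\cup Out)$ is acyclic) is different from the paper's special case and is not treated there. Your backward-walk argument does establish $R_\pi\cap\Lambda^\perp=0$ under that hypothesis: a topological source among the interior vertices has all incoming edges from $In\cup Out$, so conservation forces its coefficient $\beta$ to vanish, and one peels inductively. One step worth making explicit is the passage from $\dim\im(\pi|_\Lambda)=\dim\im(\pi)$ to $\dim\im(c|_\Lambda)=\dim\im(c)$: since $c=\psi\circ\pi$ and $\im(\pi|_\Lambda)$ is then Zariski-dense in the irreducible $\overline{\im(\pi)}$, its image under $\psi$ is dense in $\overline{\im(c)}$, giving the lower bound $k$; the upper bound is the restriction inequality you already noted. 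For the general conjecture your closing diagnosis agrees with the paper's: one must control the specific subspace $R$ rather than all of $R_\pi$, and neither you nor the paper supplies a mechanism for that.
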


In other words, we conjecture that this property of being dimension-preserving applies to all output connectable models.  However, if the dimension to begin with is not maximal, the dimension-preserving property will not lead to identifiability.  We give a proof of this conjecture in the special case where the dimension of the image of the coefficient map is $|E|+|In \cup Out|$:

\begin{thm} \label{thm:removeleaksoutputconnectable} Let $\cm=(G,In,Out,V)$ represent a linear compartmental model with $G$ output connectable and $|Out|=1$.  If $\cm$ has dimension of the image of the coefficient map equal to $|E|+|In \cup Out|$, then the the corresponding model $\wcm=(G,In,Out,L)$ where $In \cup Out \subseteq L$ also has dimension of the image of its coefficient map as $|E|+|In \cup Out|$. In particular, if $L = In \cup Out$, then $\wcm$ is locally identifiable.
\end{thm}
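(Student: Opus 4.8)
The plan is to mirror the proof of Theorem \ref{thm:removeleaks}, letting the standing hypothesis $\dim(\im c) = |E|+|In\cup Out|$ play the role that strong connectivity played there (namely, fixing the number of independent paths and cycles). First I would record the consequence of this hypothesis. Since $c = \psi\circ\pi$ by Proposition \ref{prop:factor}, and since by Proposition \ref{prop:atmost} the coefficient map factors through at most $|E|+|In\cup Out|$ independent directed paths and cycles, the chain of inequalities $\dim(\im c)\le \dim(\im\pi)\le |E|+|In\cup Out|$ must collapse to equalities. Hence there are exactly $|E|+|In\cup Out|$ independent directed paths and cycles, and $\psi$ is generically a local isomorphism on $\im\pi$.

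Passing to the model $\wcm=(G,In,Out,L)$ with $In\cup Out\subseteq L$ amounts, exactly as in the discussion preceding Lemma \ref{lemma:algindset}, to restricting the parameter space to the linear subspace $\Lambda = \{A : a_{ii} = -\sum_{j\neq i}a_{ji} \text{ for } i\notin L\}$ and composing with $c$. Because $c = \psi\circ\pi$ and $\psi$ is a local isomorphism on $\im\pi \supseteq \im(\pi|_\Lambda)$, we get $\dim(\im(c|_\Lambda)) = \dim(\im(\pi|_\Lambda))$, so it suffices to prove the analogue of Lemma \ref{lemma:algindset}: that $\pi|_\Lambda$ still has image of dimension $|E|+|In\cup Out|$.

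To prove this I would rerun the Jacobian computation of Lemma \ref{lemma:algindset}, evaluated at the point where all edge weights equal $1$. The $|L|$ rows coming from the self-cycles $a_{ii}$, $i\in L$, are standard basis vectors and split off; among the rest, let the rows $A$ collect the gradients of the non-self directed paths and cycles and let the rows $B$ collect the non-leak diagonals $i\notin L$, where $b_i$ is the indicator of the outgoing edges of $i$. Only two adjustments to the original argument are needed: (i) the rows of $A$ now have rank exactly $|E|+|In\cup Out|-|V|$, which follows from the first paragraph rather than from Lemma \ref{lemma:dimc}; and (ii) the intersection argument uses only output connectability. Indeed, the row span of $A$ lies in the balance space of edge weightings whose inflow equals outflow at every vertex outside $In\cup Out$, while any nonzero vector in the row span of $B$ assigns a common weight $\beta_i$ to all out-edges of each non-leak vertex $i$ and zero weight to the out-edges of every vertex in $L\supseteq In\cup Out$ (in particular, dead edges not lying on any path or cycle enter only through $B$, via the diagonal substitution, exactly as in Example \ref{ex:notidpathcycle}).

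The crux, and the step I expect to be the main obstacle, is showing that the balance space meets the row span of $B$ only in the origin using output connectability alone. The mechanism is that output connectability forces every cycle among the support vertices $\{i : \beta_i\neq 0\}$ to possess an exit edge directed along a path toward the output; this extra out-edge raises the outflow of its tail above what flow conservation inside the support can supply, so that an extremal argument applied to the support vertex where $|\beta_i|$ is largest (or, equivalently, propagating the balance relations forward along paths to the output) forces every $\beta_i$ to vanish. Since the row span of $A$ is contained in the balance space, it follows that $A$ and $B$ together have full rank $|E|+|In\cup Out|-|L|$, whence $\dim(\im(\pi|_\Lambda)) = |E|+|In\cup Out|$. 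Finally, when $L = In\cup Out$ the space $\Lambda$ has dimension $|E|+|In\cup Out|$ equal to the image dimension, so $c|_\Lambda$ is generically finite-to-one and $\wcm$ is locally identifiable, exactly as in Theorem \ref{thm:removeleaks}.
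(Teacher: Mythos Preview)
Your proposal is correct and follows essentially the same route as the paper: deduce from the hypothesis $\dim(\im c)=|E|+|In\cup Out|$ that $\dim(\im\pi)=|E|+|In\cup Out|$, then rerun the Jacobian argument of Lemma \ref{lemma:algindset} on the restricted space $\Lambda$ (this is exactly the content of the paper's Lemma \ref{lemma:algindsetoutputconnectable}), and conclude as in Theorem \ref{thm:removeleaks}. Where the paper simply asserts that ``the rest of the proof follows that of Lemma \ref{lemma:algindset},'' you supply additional detail on why output connectability alone suffices to force the intersection of the balance space with the row span of $B$ to be trivial; this extra justification is welcome and does not deviate from the paper's intended argument.
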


To prove Theorem \ref{thm:removeleaksoutputconnectable}, we give a variation of Lemma \ref{lemma:algindset} and then a variation of the proof of Theorem \ref{thm:removeleaks}.  

\begin{lemma} \label {lemma:algindsetoutputconnectable} Let $G = (V,E)$ be a directed graph with corresponding model $(G,In,Out,V)$. Assume that $G$ is output connectable and $|Out|=1$. Consider a model $(G,In,Out,L)$ where $In \cup Out \subseteq L$.  Let $\pi : \rr^{|V|+|E|} \rightarrow \rr^{|E|+|In \cup Out|}$ denote the path/cycle map. 
Let $\Lambda \subseteq \rr^{|V| + |E|}$ be the linear space satisfying
$$
\Lambda = \{ \mathcal{A} \in \rr^{|V| + |E|}: a_{ii} =  - \sum_{j,j \neq i} a_{ji} \mbox{ for all }
i \notin L \}.
$$
If the dimension of the image of $\pi$ is $|E|+|In \cup Out|$, then the dimension of the image of $\Lambda$ under the map $\pi$ is $|E| + |In \cup Out|$. 
\end{lemma}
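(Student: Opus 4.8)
The plan is to follow the proof of Lemma \ref{lemma:algindset} almost verbatim, isolating the single place where strong (input–output) connectedness was used and replacing it by the new hypothesis on $\dim(\im \pi)$. As there, since $\Lambda$ is a linear space of dimension $|E|+|L|$, I would work with the natural parametrization $\rr^{|E|+|L|}\to\rr^{|E|+|In\cup Out|}$ onto the path/cycle space and show its Jacobian attains full row rank $|E|+|In\cup Out|$ at a single well-chosen point; lower semicontinuity of rank then gives the generic statement. The $|L|$ rows coming from the self-cycles $a_{ii}$ with $i\in L$ are standard basis vectors in the free diagonal coordinates, so I would delete them together with those columns and reduce to the $(|E|+|In\cup Out|-|L|)\times|E|$ matrix $J$ in the edge coordinates, split as a block $A$ (a choice of $|E|-|V|+|In\cup Out|$ non-self cycles and paths) stacked over a block $B$ (the $|V|-|L|$ non-leak diagonals $a_{ii}=-\sum_{j\neq i}a_{ji}$).

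I would then specialize all edge weights to $1$, which is a point of $\Lambda$. There the differential of a squarefree monomial $a^C$ is exactly the indicator vector of $C$, so the rows of $A$ become indicator vectors of the chosen cycles and paths and each row of $B$ becomes the negated indicator of the out-edges of a non-leak vertex. Two independence facts are needed. First, the rows of $B$ are independent since distinct vertices have disjoint out-edge sets. Second—and this is the one spot differing from Lemma \ref{lemma:algindset}—the rows of $A$ must be independent; in the strongly connected setting this was Corollary \ref{cor:numpathscycles}, but here I would instead read off directly from the hypothesis $\dim(\im \pi)=|E|+|In\cup Out|$ that there are exactly $|E|+|In\cup Out|$ independent paths and cycles, so discarding the $|V|$ self-cycles leaves $|E|-|V|+|In\cup Out|$ non-self cycles and paths with independent indicator vectors, and these form $A$.

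It remains to show the row spaces of $A$ and $B$ meet only in $0$. Each row of $A$ is the indicator of a cycle or of an $In$–$Out$ path, hence lies in the flow space $\cf=\{x\in\rr^{|E|}:\text{indegree}=\text{outdegree at every }v\notin In\cup Out\}$, so it suffices to prove $\cf\cap\mathrm{rowspan}(B)=\{0\}$, and \emph{this is the step I expect to be the main obstacle}. A vector $w\in\mathrm{rowspan}(B)$ has the special form $w_{i\to k}=-\mu_i$ on every out-edge of a non-leak vertex $i$ and $0$ on out-edges of leak vertices; output-connectability is what rules out a nonzero such $w$ in $\cf$. Setting $S=\{v:\mu_v>0\}$ and summing the (vanishing) net flow over $S$, the contribution of edges leaving $S$ is $\le 0$ and that of edges entering $S$ is $\ge 0$, so both must vanish and $S$ is closed under out-edges; but every vertex of $S$ has a directed path to the output $j\in Out\subseteq L$, which would force $j\in S$, contradicting $\mu_j=0$. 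Hence $S=\emptyset$, and applying the same argument to $-w$ gives $\mu\equiv 0$, so $w=0$. Combining the three facts, $J$ has full row rank $|E|+|In\cup Out|-|L|$, and restoring the $|L|$ self-cycle rows gives rank $|E|+|In\cup Out|$, so the image of $\Lambda$ under $\pi$ has the asserted dimension.
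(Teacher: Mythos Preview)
Your proposal is correct and follows exactly the paper's approach: the paper's proof of this lemma is a one-line remark that the only place the proof of Lemma~\ref{lemma:algindset} used strong (input--output) connectedness was to obtain the $|E|+|In\cup Out|$ independent directed cycles and paths, and that this is now supplied directly by the hypothesis $\dim(\im\pi)=|E|+|In\cup Out|$, after which ``the rest of the proof follows that of Lemma~\ref{lemma:algindset}.'' Your write-up simply unpacks that ``rest of the proof'' in full---in particular, your max-principle style argument (summing the net flow over $S=\{v:\mu_v>0\}$ and using output-connectability to reach the leak vertex $j$) is a cleaner version of the informal ``continuing in this way'' paragraph the paper gives inside Lemma~\ref{lemma:algindset}.
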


\begin{proof} Removing the assumption of strongly input-output connected from Lemma \ref{lemma:algindset} means that we cannot guarantee there are $|E|+|In \cup Out|$ independent directed paths and directed cycles. However, if we assume the dimension of the image of $\pi$ is $|E|+|In \cup Out|$, then the rest of the proof follows that of Lemma \ref{lemma:algindset}.
\end{proof}

\begin{proof} [Proof of Theorem \ref{thm:removeleaksoutputconnectable}]  
By Lemma \ref{lemma:algindsetoutputconnectable} we know that the image of the restricted parameter space
under the path/cycle map $\pi$ has dimension $|E| + |In \cup Out|$, which is equal to the dimension of the image of the full parameter
space under the path/cycle map.  Since the dimension of the image of the coefficient map $c$ is $|E| + |In \cup Out|$,
this must be the same for the restricted model. In particular, if $|L|=|In \cup Out|$, then the model
has $|E| + |In \cup Out|$ parameters, hence it is locally identifiable.
\end{proof}

\begin{ex} [Example \ref{ex:notidpathcycle} revisited] The model $\wcm=(G,\{1\},\{2\},V)$ where $G$ is the graph given by $\{ 1 \rightarrow 2, 3 \rightarrow 2 \}$ is output connectable and has dimension of the image of the coefficient map equal to $|E|+2=4$, thus the model given by $\cm=(G,\{1\},\{2\},\{1,2\})$ is locally identifiable.  
\end{ex}

Combining Theorem \ref{thm:othered} and Theorem \ref{thm:removeleaksoutputconnectable}, we get the following necessary and sufficient conditions:

\begin{cor} \label{cor:necandsuffoutputconnectable} Let $\cm=(G,In,Out,L)$ represent a linear compartmental model with and $L=In \cup Out$ and assume that $G$ is output connectable and $|Out|=1$. Let $\wcm=(G,In,Out,V)$ be the corresponding model with a leak in every compartment.  $\cm$ is locally identifiable if and only if $\wcm$ has dimension of the image of the coeffiicent map as $|E|+|In \cup Out|$.  
\end{cor}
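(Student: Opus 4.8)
The plan is to prove the biconditional by first isolating the single bridging fact that, for a model whose leak set has size exactly $|In \cup Out|$, local identifiability coincides with having maximal coefficient-map dimension, and then to invoke Theorem \ref{thm:othered} and Theorem \ref{thm:removeleaksoutputconnectable} for the two implications. First I would record that the coefficient map of $\cm=(G,In,Out,L)$ with $L=In\cup Out$ is a map $c\colon \rr^{|E|+|In\cup Out|}\to \rr^{k}$, so its domain has dimension exactly $|E|+|In\cup Out|$. By the Jacobian rank criterion (Proposition 2 of \cite{MeshkatSullivantEisenberg}), $\cm$ is locally identifiable precisely when the Jacobian of $c$ has rank $|E|+|In\cup Out|$, i.e. precisely when the image of $c$ has dimension $|E|+|In\cup Out|$. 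Thus for this leak set the phrases ``locally identifiable'' and ``dimension of the image of $c$ equals $|E|+|In\cup Out|$'' are interchangeable, and by Proposition \ref{prop:atmost} this value is the maximal one attainable.

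For the forward implication I would assume $\cm$ is locally identifiable. By the bridging fact, $\cm$ then has coefficient-map dimension $|E|+|In\cup Out|$; since $G$ is output connectable with $|Out|=1$ and $|L|=|In\cup Out|$, Theorem \ref{thm:othered} applies verbatim and transfers this dimension to the full-leak model, so $\wcm=(G,In,Out,V)$ also has coefficient-map dimension $|E|+|In\cup Out|$, as desired.

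For the reverse implication I would assume $\wcm=(G,In,Out,V)$ has coefficient-map dimension $|E|+|In\cup Out|$. Here Theorem \ref{thm:removeleaksoutputconnectable} applies with $L=In\cup Out$ (the hypothesis $In\cup Out\subseteq L$ holds trivially), giving that $\cm=(G,In,Out,In\cup Out)$ again has coefficient-map dimension $|E|+|In\cup Out|$; the bridging fact then upgrades this to local identifiability of $\cm$, completing the equivalence.

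The substantive work has already been carried out in Theorems \ref{thm:othered} and \ref{thm:removeleaksoutputconnectable}, so I do not expect a genuine obstacle here; the one point requiring care is the bridging equivalence, which relies crucially on $|L|=|In\cup Out|$ so that the number of parameters of $\cm$ coincides with the upper bound $|E|+|In\cup Out|$. Without that equality of leak count the two notions would decouple, so I would take care to flag exactly where it is used and to note that the output connectable, $|Out|=1$ hypotheses are precisely what is needed to make both cited theorems available.
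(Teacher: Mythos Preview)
Your proposal is correct and follows the same route as the paper, which simply states that the corollary follows by combining Theorem \ref{thm:othered} and Theorem \ref{thm:removeleaksoutputconnectable}. Your write-up is more explicit than the paper's one-line justification, in particular in spelling out the bridging fact that with $|L|=|In\cup Out|$ local identifiability coincides with having coefficient-map dimension $|E|+|In\cup Out|$, but the logical structure is identical.
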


\begin{rmk} Corollary \ref{cor:necandsuffoutputconnectable} shows that this dimension-preserving property when adding or removing leaks from non-input/output compartments also holds in the output connectable case when the dimension of the image of the coefficient map is $|E|+|In \cup Out|$.  
\end{rmk}

\section{Necessary conditions for identifiable models based on model structure} \label{section:neccond}

Outside of checking the conditions in Theorem \ref{thm:almost_isc}, i.e. if a model is an inductively strongly connected model if edges from output to input are added, checking if a model is an identifiable path/cycle model amounts to checking the dimension of the image of the coefficient map, and thus cannot be ascertained by simply examining the graph of the model.  However, it is possible to provide necessary conditions for identifiable models and identifiable path/cycle models based on the graph itself, and thus can be used to rule out identifiability.

\begin{thm} \label{thm:leakcondition} Let $\cm=(G,In,Out,L)$ represent a linear compartmental model with either $G$ strongly input-output connected and $|Out|=1$ or $G$ strongly connected and $|In|=1$.  If $|L|>|In \cup Out|$, then $\cm$ is unidentifiable.  
\end{thm}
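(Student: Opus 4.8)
The plan is to combine the upper bound on the dimension of the image of the coefficient map from Lemma \ref{lemma:dimc} with the Jacobian criterion for local identifiability. The key point is that the number of parameters $|E|+|L|$ strictly exceeds the maximal possible dimension of the image of $c$ as soon as $|L|>|In \cup Out|$, so $c$ cannot be locally injective and must in fact be infinite-to-one.

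First I would recall that by Proposition 2 of \cite{MeshkatSullivantEisenberg} the model $\cm=(G,In,Out,L)$ is generically locally identifiable if and only if the generic rank of the Jacobian of $c$ equals $|E|+|L|$, which is the number of parameters, i.e.\ the dimension of the domain $\rr^{|E|+|L|}$. Since $c$ is a polynomial map, its generic Jacobian rank equals the dimension of its image, so it suffices to bound that dimension.

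Next I would observe, exactly as in the discussion preceding Lemma \ref{lemma:algindset}, that the coefficient map $c$ of $(G,In,Out,L)$ is the restriction to the linear subspace
\[
\Lambda=\{A\in\rr^{|V|+|E|}: a_{ii}=-\textstyle\sum_{j,j\neq i}a_{ji}\text{ for all }i\notin L\}
\]
of the full-leak coefficient map $\tilde{c}$ associated to $(G,In,Out,V)$. Hence $\mathrm{im}(c)=\tilde{c}(\Lambda)\subseteq\mathrm{im}(\tilde{c})$, so $\dim\mathrm{im}(c)\leq\dim\mathrm{im}(\tilde{c})$. Under the standing hypothesis that either $G$ is strongly input-output connected with $|Out|=1$ or $G$ is strongly connected with $|In|=1$, Lemma \ref{lemma:dimc} gives $\dim\mathrm{im}(\tilde{c})\leq|E|+|In\cup Out|$. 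Combining these, the generic rank of the Jacobian of $c$ is at most $|E|+|In\cup Out|$.

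Finally, when $|L|>|In\cup Out|$ we obtain generic rank $\leq|E|+|In\cup Out|<|E|+|L|$, so the Jacobian never attains full rank and $\cm$ fails the local identifiability criterion. To upgrade this to unidentifiability I would note that the generic fiber of $c$ has dimension $|E|+|L|-\mathrm{rank}\geq1$, hence is infinite, so $c$ is infinite-to-one and $\cm$ is unidentifiable in the sense of Definition \ref{defn:identify}. The argument is essentially immediate once the bound of Lemma \ref{lemma:dimc} is in hand; the only step deserving care is this last passage from a rank deficiency to genuine infinite-to-one behavior, which follows from the generic fiber being positive-dimensional.
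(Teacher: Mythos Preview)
Your proof is correct and follows essentially the same approach as the paper's: the number of parameters $|E|+|L|$ exceeds the maximal dimension $|E|+|In\cup Out|$ of the image of the coefficient map, forcing unidentifiability. Your version is more careful in one respect: Lemma \ref{lemma:dimc} is stated only for the full-leak model $(G,In,Out,V)$, and you explicitly justify why the bound transfers to $(G,In,Out,L)$ via the restriction $c=\tilde{c}|_{\Lambda}$, whereas the paper's proof simply asserts that $|E|+|In\cup Out|$ is the maximal dimension.
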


\begin{proof} If the number of parameters $|E|+|L| > |E|+|In \cup Out|$, where $|E| + |In \cup Out|$ is the maximal dimension by Lemma \ref{lemma:dimc}, then the model is unidentifiable.  This reduces to $|L| > |In \cup Out|$.  
\end{proof}

We can now make some statements about necessary conditions in the case of the maximal amount of edges.

\begin{thm} Let $\cm=(G,\{i\},\{i\},\{k\})$ represent a linear compartmental model with $G$ strongly connected and $2|V|-2$ edges.  If $\cm$ is locally identifiable (or equivalently, $\wcm=(G,\{i\},\{i\},V)$ is an identifiable cycle model), it must have an exchange.  
\end{thm}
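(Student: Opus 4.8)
The plan is to prove the contrapositive: assuming $G$ contains no exchange (no pair of edges $u\to v$ and $v\to u$), I will show that $\wcm=(G,\{i\},\{i\},V)$ is not an identifiable cycle model, which by the equivalence recorded in the statement means $\cm$ is unidentifiable. Since $\wcm$ is strongly connected with a leak in every compartment, Lemma~\ref{lemma:dimc} bounds the image of its coefficient map by $|E|+|In\cup Out| = |E|+1$, and $\wcm$ is an identifiable cycle model precisely when this bound is attained. Because $|E| = 2|V|-2$ is the maximal number of edges, we have $|E|+1 = 2|V|-1$, which by Theorem~\ref{thm:numcoeffs} (applied with $n = |In-Out| = 0$ and $m = |In\cap Out| = 1$) is exactly the number of coefficients of the input-output equation. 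Hence the bound is attained if and only if the $2|V|-1$ coefficient functions are algebraically independent, so it suffices to exhibit a single nontrivial polynomial relation among them.

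Write the input-output equation as $\det(\partial I - A)y_i = \det(\partial I - A)_{ii}u_i$, with left-hand coefficients $c_1,\dots,c_{|V|}$ and right-hand coefficients $d_1,\dots,d_{|V|-1}$ (both sides monic). For $|V|=2$ the only strongly connected graph with $2|V|-2=2$ edges is the exchange $1\leftrightarrow 2$, so the conclusion is immediate; assume henceforth $|V|\ge 3$, so that $d_2$ is a genuine coefficient. By Proposition~\ref{prop:factor} the $c_m$ factor through the cycles of $G$ and the $d_m$ through the cycles of the induced subgraph on $V\setminus\{i\}$. Writing $s_v = a_{vv}$ for the self-cycles, the only cycles of length at most $2$ are the self-cycles and the $2$-cycles, giving $c_1 = -\sum_v s_v$, $d_1 = -\sum_{v\neq i} s_v$, together with $c_2 = e_2(s) - \sum_{\text{$2$-cycles}} a^C$ and $d_2 = e_2\big((s_v)_{v\neq i}\big) - \sum_{\text{$2$-cycles in }V\setminus\{i\}} a^C$. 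The no-exchange hypothesis annihilates every $2$-cycle monomial, leaving $c_2 = e_2(s)$ and $d_2 = e_2\big((s_v)_{v\neq i}\big)$.

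A direct symmetric-function identity now produces the relation. Setting $s_i = d_1 - c_1$ (since $-c_1 = \sum_v s_v$ and $-d_1 = \sum_{v\neq i}s_v$) and splitting $e_2(s) = e_2\big((s_v)_{v\neq i}\big) + s_i\sum_{v\neq i}s_v$ yields
\[
c_2 = d_2 - d_1^2 + c_1 d_1 .
\]
This is a nontrivial polynomial relation among $c_1,c_2,d_1,d_2$, so the $2|V|-1$ coefficient functions are algebraically dependent and the image of the coefficient map has dimension at most $2|V|-2 < |E|+1$. Thus $\wcm$ fails to attain the expected dimension, is not an identifiable cycle model, and $\cm$ is unidentifiable, which is the contrapositive of the claim.

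The computation is routine, so the only point requiring genuine care is the sign bookkeeping in Proposition~\ref{prop:factor} needed to confirm the displayed relation exactly; I expect the main conceptual step to be the observation that, once exchanges are forbidden, $c_2$ collapses to a symmetric function of the self-cycles that is already determined by $c_1, d_1, d_2$. I would also note that the argument in fact proves slightly more: carrying the $2$-cycle terms through, one finds $c_2 - (d_2 - d_1^2 + c_1 d_1) = -\sum_{\text{exchanges incident to }i} a^C$, so local identifiability forces an exchange through the input/output compartment $i$ itself; the weaker conclusion that $G$ merely contains some exchange follows at once.
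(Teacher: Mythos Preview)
Your proof is correct and takes a genuinely different route from the paper's. The paper simply invokes Proposition~5.3 of \cite{MeshkatSullivant} as a black box: that result already asserts that an identifiable cycle model with $2|V|-2$ edges must contain an exchange, and the theorem here just combines that citation with the equivalence between local identifiability of $\cm$ and $\wcm$ being an identifiable cycle model.

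You instead supply a self-contained argument, exhibiting the explicit algebraic relation $c_2 = d_2 - d_1^2 + c_1 d_1$ that holds among the first two left- and right-hand coefficients whenever $G$ has no $2$-cycles. This is more elementary than appealing to the cited paper and makes transparent exactly where the exchange hypothesis enters. It also buys you a genuine strengthening: as you observe, carrying the $2$-cycle terms through shows the obstruction vanishes only when there is an exchange incident to the input/output compartment $i$, so local identifiability in fact forces an exchange through $i$, not merely somewhere in $G$. The paper's citation-based proof does not extract this refinement. The trade-off is that your argument is specific to the degree-two coefficients and does not obviously generalize, whereas the cited proposition may sit inside a broader framework; but for the statement at hand your approach is both cleaner and sharper.
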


\begin{proof} From Proposition 5.3 of \cite{MeshkatSullivant}, we know that $G$ must have an exchange in order for $\wcm=(G,\{i\},\{i\},V)$ to be an identifiable cycle model.  Thus, if $G$ does not have an exchange, $\wcm$ is not an identifiable cycle model and thus $\cm=(G,\{i\},\{i\},\{k\})$ is not an identifiable model.
\end{proof}

\begin{thm} \label{thm:edgecondition} Let $\cm=(G,\{i\},\{j\},L)$ represent a linear compartmental model with $G$ strongly input-output connected, $\dist(i,j)=1$, and $2|V|-(\dist(i,j) +2)$ edges and $|L|=|In \cup Out|$.  If $\cm$ is locally identifiable (or equivalently, $\wcm=(G,\{i\},\{j\},V)$ is an identifiable path/cycle model), it must have an edge from $i$ to $j$ (i.e. a path of length $\dist(i,j)$). 
\end{thm}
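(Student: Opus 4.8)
The plan is to read the required edge directly off the highest-order term of $u_i$ in the input-output equation, using identifiability only to certify that this term is genuinely present and nonzero. By Corollary \ref{cor:necandsuff} it is equivalent to work with the full-leak model $\wcm=(G,\{i\},\{j\},V)$, which the hypothesis asserts is an identifiable path/cycle model. First I would fix the numerology. Since $i\neq j$ (so $\dist(i,j)$ is defined) with a single input and single output, Theorem \ref{thm:numcoeffs} gives that the expected number of coefficients is $2|V|-\dist(i,j)$, and the maximal-edge hypothesis $|E|=2|V|-(\dist(i,j)+2)$ is exactly the equality $|E|+|In\cup Out|=2|V|-\dist(i,j)$. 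Thus at this edge count the number of independent paths and cycles (Corollary \ref{cor:numpathscycles}) coincides with the number of coefficients, so by Theorem \ref{thm:idpathcycle} all $|E|+|In\cup Out|$ of these paths and cycles are individually identifiable and, in particular, each occurs as a genuine nonzero monomial contribution in the coefficient map.

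Next I would isolate the highest-order coefficient of $u_i$. By Lemma \ref{lemma:hot} this is the coefficient of $u_i^{(|V|-1-\dist(i,j))}$, and it equals $\sum_{P\in\mathcal{P}(i,j):\,l(P)=\dist(i,j)}a^P$, the sum of the monomials of all shortest paths from $i$ to $j$. Because $\dist(i,j)=1$, any path $P$ with $l(P)=\dist(i,j)=1$ consists of a single edge $i\to j$, whose associated monomial is the edge parameter $a_{ji}$ (recall that under the paper's convention the edge $i\to j$ carries the parameter $a_{ji}$). Since $G$ has at most one edge $i\to j$, this highest-order coefficient is either the single monomial $a_{ji}$ or is identically zero, according to whether or not the edge $i\to j$ lies in $G$.

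The conclusion then follows: this highest-order coefficient is precisely one of the $2|V|-\dist(i,j)$ nonzero coefficients guaranteed by Theorem \ref{thm:numcoeffs}, equivalently the image under the factoring map $\psi$ of Proposition \ref{prop:factor} of the length-one path from $i$ to $j$, which at the tight edge count is one of the $|E|+|In\cup Out|$ independent identifiable generators of Theorem \ref{thm:idpathcycle}. Were the edge $i\to j$ absent, this coefficient would vanish identically, removing the monomial $a_{ji}$ from the identifiable path/cycle data and dropping the dimension of the image of the coefficient map below $|E|+|In\cup Out|$, contradicting identifiability. Hence $G$ must contain the edge $i\to j$. The step I expect to be the main obstacle is exactly this last piece of bookkeeping: verifying that $a_{ji}$ genuinely contributes an independent coordinate to the coefficient map, so that its disappearance strictly lowers the image dimension rather than being silently recoverable from the remaining cycle and longer-path coefficients. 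This is where the maximal-edge hypothesis is essential, since it forces $|E|+|In\cup Out|$ to equal the number of coefficients and, through Corollary \ref{cor:numpathscycles} and Theorem \ref{thm:idpathcycle}, singles out the length-one path from $i$ to $j$ as one of the independent identifiable generators; the identification of the top coefficient itself (Lemma \ref{lemma:hot}) and its nonvanishing (Theorem \ref{thm:numcoeffs}) are then routine.
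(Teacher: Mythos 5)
Your proposal is correct and follows essentially the same route as the paper's own proof: both arguments rest on Lemma \ref{lemma:hot} identifying the highest-order coefficient of $u_i$ as the sum of shortest-path monomials, observe that this coefficient vanishes identically if the edge $i\to j$ is absent, and then derive a contradiction because the number of nonzero coefficients falls below $|E|+|In\cup Out| = 2|V|-\dist(i,j)$, the dimension that local identifiability requires. Your additional detour through Corollary \ref{cor:necandsuff}, Theorem \ref{thm:idpathcycle}, and Corollary \ref{cor:numpathscycles} is harmless packaging of the same counting argument, which the paper carries out directly on the model $\cm$ itself.
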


\begin{proof} If there is no path from $i$ to $j$, then the coefficient of the highest-order term on the right hand side of the input-output equation is zero and there would be fewer than $2|V|-\dist(i,j)$ coefficients. But there are $2|V|-(\dist(i,j)+2)$ edges, so if $\cm$ is locally identifiable, then it has expected dimension $|E|+2 = 2|V|-\dist(i,j)$, which is impossible if there are fewer than $2|V|-\dist(i,j)$ coefficients.
\end{proof}

We can also have an analogous necessary condition in the case of fewer than $2|V|-(\dist(i,j)+2)$ edges.

\begin{thm} \label{thm:pathcondition} Let $\cm=(G,\{i\},\{j\},L)$ with $i \neq j$ represent a linear compartmental model with $G$ strongly input-output connected and $2|V|-(k +2)$ edges where $k \geq 1$ and $|L| = |In \cup Out|$.  If $\cm$ is locally identifiable (or equivalently, $\wcm=(G,\{i\},\{j\},V)$ is an identifiable path/cycle model), it must have a path from $i$ to $j$ of length at most $k$.
\end{thm}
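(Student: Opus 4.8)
The plan is to convert local identifiability into a dimension count and compare it against the number of coefficients of the input-output equation, which is forced by $\dist(i,j)$. Since $i \neq j$ we have $|In \cup Out| = 2$, so the model $\cm = (G,\{i\},\{j\},L)$ has exactly $|E| + |L| = |E| + 2$ parameters. By the Jacobian criterion (Proposition 2 of \cite{MeshkatSullivantEisenberg}), local identifiability forces the coefficient map to have image of dimension equal to the number of parameters, namely $|E| + 2 = 2|V| - (k+2) + 2 = 2|V| - k$.

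Next I would pass to the full-leak model. Local identifiability gives $\cm$ expected dimension $|E| + |In \cup Out|$ with $|L| = |In \cup Out|$, so Theorem \ref{thm:addleaks} applies and the model $\wcm = (G,\{i\},\{j\},V)$ with a leak in every compartment also has dimension of the image of its coefficient map equal to $|E| + 2 = 2|V| - k$. This is precisely the ``equivalently'' reformulation appearing in the statement, and its advantage is that $\wcm$ meets the hypotheses of Theorem \ref{thm:numcoeffs}: it has leaks everywhere, and $G$ strongly input-output connected is output connectable by Proposition \ref{prop:outputreach}, so in particular there is a directed path from $i$ to $j$ and $\dist(i,j)$ is finite.

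Applying Theorem \ref{thm:numcoeffs} to $\wcm$ with $n = |In - Out| = 1$ and $m = |In \cap Out| = 0$ yields exactly $2|V| - \dist(i,j)$ nonzero coefficients. Since the dimension of the image of a map into $\rr^{2|V| - \dist(i,j)}$ cannot exceed $2|V| - \dist(i,j)$, I obtain
$$2|V| - k \;=\; |E| + 2 \;\leq\; 2|V| - \dist(i,j),$$
which simplifies to $\dist(i,j) \leq k$. By Definition \ref{defn:dist}, $\dist(i,j)$ is the length of a shortest directed path from $i$ to $j$, so this exhibits a path from $i$ to $j$ of length at most $k$, as desired.

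The only genuinely delicate point is the leak placement: Theorem \ref{thm:numcoeffs} counts coefficients under the assumption $In \cup Out \subseteq L$, whereas the hypothesis here is only $|L| = |In \cup Out|$. Routing through Theorem \ref{thm:addleaks} to the full-leak model sidesteps this, since leaks everywhere trivially satisfy $In \cup Out \subseteq V$. Alternatively one can argue directly: removing leaks only substitutes the diagonal entries $a_{ii}$ and can make coefficients vanish but never raises the $\partial$-degree of the input-output equation, so the bound of $2|V| - \dist(i,j)$ on the number of coefficients --- which by Lemma \ref{lemma:hot} is a purely graph-theoretic consequence of the shortest cycle through $a_{ij}$ having length $\dist(i,j)+1$ --- holds for any leak set. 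Either way the displayed inequality is the crux, and it is essentially a dimension-counting observation rather than a hard step.
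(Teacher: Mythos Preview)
Your proof is correct and follows essentially the same route as the paper: both arguments compare the dimension $|E|+2 = 2|V|-k$ against the coefficient count $2|V|-\dist(i,j)$ to conclude $\dist(i,j)\le k$. Your version is more careful than the paper's in making explicit the passage to the full-leak model via Theorem~\ref{thm:addleaks} and in addressing the leak-placement hypothesis of Theorem~\ref{thm:numcoeffs}, which the paper glosses over.
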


\begin{proof} The coefficient of the highest-order term on the right hand side of the input-output equation is a sum of shortest paths from input to output of length $\dist(i,j)$ and this must be nonzero for $\cm$ to have expected dimension.  Since there are $2|V|-\dist(i,j)$ nonzero coefficients and expected dimension is $|E|+2$, this means $k$ is at least $\dist(i,j)$. So there must be a path from $i$ to $j$ of length at most $k$.
\end{proof}

\section{Examples} \label{section:Examples}

We now provide some real world examples that fall into the categories of models considered in this paper.  In particular, we obtain identifiability or unidentifiability results in Example \ref{ex:ex1} and Example \ref{ex:ex2} without any symbolic computation, i.e. purely based on the graph structure alone.  

\begin{ex} \label{ex:ex1} Consider Example 13.6 from \cite{distefano-book} on HIV vaccine development (Part 1).  Three models are considered that fall into the category of path models with leaks from every compartment as in Proposition \ref{prop:path}, shown in Figure 8. The top model corresponds to Experiment 1, the middle model corresponds to Experiment 2, and the bottom model corresponds to Experiment 3. It is clear that the Experiment 1 model is identifiable.  Using Proposition \ref{prop:path}, we can easily obtain that the Experiment 2 model is identifiable.  By Proposition \ref{prop:path}, the Experiment 3 model has expected dimension and is thus unidentifiable with identifiable functions given by the paths and cycles (where the ``self-cycles'' have been expanded out): $k_{23}k'_{12}$, $-k_{03}-k_{23}$, $-k'_{02}-k'_{12}$, $-k''_{01}$.

\end{ex}

\begin{figure}[H] \label{figure:ex1}
\begin{center}

\begin{tikzpicture}[scale=1.2]

 	\draw (4,4) circle (0.3);
    	\node[] at (4, 4) {$1$};


	\draw (4.69,4.69) circle (0.07);	
	 \draw[-] (4.65, 4.65 ) -- (4.22, 4.22);	
	 \draw[->] (3, 4) -- (3.65, 4);	
   	 \node[] at (2.8,4) {in};
	 \draw[->] (4,3.7) -- (4, 3.2);	
   	 \node[] at (4.3, 3.5) {$k_{01}$};
    	

 	\draw (2,2) circle (0.3);
 	\draw (4,2) circle (0.3);
    	\node[] at (2, 2) {2};
    	\node[] at (4, 2) {$1$};
	 \draw[->] (2.35,2) -- (3.65,2);
	\node[] at (3,2.2) {$k_{12}$};
	\draw (4.69,2.69) circle (0.07);	
	 \draw[-] (4.65, 2.65 ) -- (4.22, 2.22);	
	 \draw[->] (1, 2) -- (1.65, 2);	
   	 \node[] at (.8,2) {in};
	 \draw[->] (2,1.7) -- (2, 1.2);	
   	 \node[] at (2.3, 1.5) {$k_{02}$};
	 \draw[->] (4,1.7) -- (4, 1.2);	
   	 \node[] at (4.3, 1.5) {$k'_{01}$};
    	

 	\draw (0,0) circle (0.3);
 	\draw (2,0) circle (0.3);
 	\draw (4,0) circle (0.3);
    	\node[] at (0, 0) {3};
    	\node[] at (2, 0) {2};
    	\node[] at (4, 0) {$1$};
	 \draw[->] (0.35, 0) -- (1.65, 0);
	 \draw[->] (2.35,0) -- (3.65,0);
   	 \node[] at (1, 0.2) {$k_{23}$};
	\node[] at (3,0.2) {$k'_{12}$};
	\draw (4.69,.69) circle (0.07);	
	 \draw[-] (4.65, .65 ) -- (4.22, .22);	
	 \draw[->] (-1, 0) -- (-.35, 0);	
   	 \node[] at (-1.2,0) {in};
	 \draw[->] (0,-.3) -- (0, -.8);	
   	 \node[] at (.3, -.5) {$k_{03}$};
	 \draw[->] (2,-.3) -- (2, -.8);	
   	 \node[] at (2.3, -.5) {$k'_{02}$};
	 \draw[->] (4,-.3) -- (4, -.8);	
   	 \node[] at (4.3, -.5) {$k''_{01}$};
    	
\draw (-2,-1.5) rectangle (5.5, 5);

\end{tikzpicture}
\end{center}
\caption{}
\end{figure}

\begin{ex} \label{ex:ex2} Consider Example 13.16 from \cite{distefano-book} on HIV vaccine development (Part 3).  The models from Example \ref{ex:ex1} are amended by adding on exchanges to the output compartments, shown in Figure 9, and the numbering scheme has changed to agree with \cite{distefano-book}.  The Experiment 1 model is identifiable by Theorem \ref{thm:removeleaks} as it is an identifiable cycle model (it is inductively strongly connected) with a single leak in the input/output compartment.  The Experiment 2 model is almost inductively strongly connected and thus is identifiable by Corollary \ref{cor:almost_isc_id}.  A variation on the Experiment 3 model with leaks from all compartments can be shown to be an identiable path/cycle model by a direct calculation, thus removing the leak from compartment 9 retains the dimension by Theorem \ref{thm:removeleaks}, which means the model in Experiment 3 is unidentifiable.  Alternatively, one can apply Proposition \ref{prop:addvertex} to a variation on the model in Experiment 2 with leaks from every compartment (which has expected dimension) and thus obtain that the variation on the model in Experiment 3 with leaks from every compartment also has expected dimension.  Now removing the leak from compartment 9 retains the dimension by Theorem \ref{thm:removeleaks}, and thus the model in Experiment 3 is unidentifiable.  The identifiable functions are given by the paths and cycles (where the ``self-cycles'' have been expanded out): $k_{53}k_{65}$, $k_{69}k_{96}$, $-k_{03}-k_{53}$, $-k_{05}-k_{65}$, $-k_{06}-k_{96}$, $-k_{69}$, .

\end{ex}

\begin{figure}[H] \label{figure:ex2}
\begin{center}

\begin{tikzpicture}[scale=1.2]

 	\draw (4,4) circle (0.3);
 	\draw (6,4) circle (0.3);

    	\node[] at (4, 4) {$1$};
    	\node[] at (6,4) {$7$};

	 \draw[->] (4.35,4.1) -- (5.65,4.1);
	 \draw[<-] (4.35,3.9) -- (5.65,3.9);

	\node[] at (5,4.3) {$k_{71}$};
	\node[] at (5,3.7) {$k_{17}$};

	\draw (4.69,4.69) circle (0.07);	
	 \draw[-] (4.65, 4.65 ) -- (4.22, 4.22);	
	 \draw[->] (3, 4) -- (3.65, 4);	
   	 \node[] at (2.8,4) {in};
	 \draw[->] (4,3.7) -- (4, 3.2);	
   	 \node[] at (4.3, 3.5) {$k_{01}$};
    	

 	\draw (2,2) circle (0.3);
 	\draw (4,2) circle (0.3);
 	 \draw (6,2) circle (0.3);
    	\node[] at (2, 2) {2};
    	\node[] at (4, 2) {$4$};
    	\node[] at (6,2) {$8$};
	 \draw[->] (2.35,2) -- (3.65,2);
	 \draw[->] (4.35,2.1) -- (5.65,2.1);
	 \draw[<-] (4.35,1.9) -- (5.65,1.9);

	\node[] at (3,2.2) {$k_{42}$};
	\node[] at (5,2.3) {$k_{84}$};
	\node[] at (5,1.7) {$k_{48}$};

	\draw (4.69,2.69) circle (0.07);	
	 \draw[-] (4.65, 2.65 ) -- (4.22, 2.22);	
	 \draw[->] (1, 2) -- (1.65, 2);	
   	 \node[] at (.8,2) {in};
	 \draw[->] (2,1.7) -- (2, 1.2);	
   	 \node[] at (2.3, 1.5) {$k_{02}$};
	 \draw[->] (4,1.7) -- (4, 1.2);	
   	 \node[] at (4.3, 1.5) {$k_{04}$};
    	

 	\draw (0,0) circle (0.3);
 	\draw (2,0) circle (0.3);
 	\draw (4,0) circle (0.3);
 	\draw (6,0) circle (0.3);

    	\node[] at (0, 0) {3};
    	\node[] at (2, 0) {5};
    	\node[] at (4, 0) {$6$};
    	\node[] at (6,0) {$9$};
	 \draw[->] (0.35, 0) -- (1.65, 0);
	 \draw[->] (2.35,0) -- (3.65,0);
	 \draw[->] (4.35,.1) -- (5.65,.1);
	 \draw[<-] (4.35,-.1) -- (5.65,-.1);

   	\node[] at (1, 0.2) {$k_{53}$};
	\node[] at (3,0.2) {$k_{65}$};
	\node[] at (5,.3) {$k_{96}$};
	\node[] at (5,-.3) {$k_{69}$};

	\draw (4.69,.69) circle (0.07);	
	 \draw[-] (4.65, .65 ) -- (4.22, .22);	
	 \draw[->] (-1, 0) -- (-.35, 0);	
   	 \node[] at (-1.2,0) {in};
	 \draw[->] (0,-.3) -- (0, -.8);	
   	 \node[] at (.3, -.5) {$k_{03}$};
	 \draw[->] (2,-.3) -- (2, -.8);	
   	 \node[] at (2.3, -.5) {$k_{05}$};
	 \draw[->] (4,-.3) -- (4, -.8);	
   	 \node[] at (4.3, -.5) {$k_{06}$};
    	

\draw (-2,-1.5) rectangle (7, 5);

\end{tikzpicture}
\end{center}
\caption{}
\end{figure}

\section{Computations} \label{section:computations}

In the table below we outline the number of graphs with $n$ vertices and $m$ edges that have the expected dimension with input in $i$ and output in $j$, assuming leaks from every compartment:

\begin{center}
    \begin{tabular}{ | p{1cm} | p{1.5cm}| p{1.5cm}| p{1.5cm}| p{1.5cm} | p{1.5cm} | p{1.5cm}| p{1.5cm}| p{1.5cm}|}
    \hline
   $(n,m)$ & Total & Strongly Connected &$i=1, \newline	j=1$ &$i=1, \newline j=2,3$ & Strongly input-output connected $i=1, \newline j=2$ & $i=1,\newline j=2$ & Strongly input-output connected $i=1,3, \newline j=2$ & $i=1,3, \newline j=2$  \\ \hline
	  (3,2)  & 15 & NA & NA & NA & 1 & 1 & 3 & 3 \\ \hline
    (3,3)  & 20 & 2 & 2 & 2 & 7 & 4 & 10 & 8 \\ \hline
    (3,4)  & 15 & 9 & 7 & 3 & 11 & NA & 12 & 4 \\ \hline
		(4,3)  & 220 & NA & NA & NA & 2 & 2 & 7 & 7\\ \hline
    (4,4)  & 495 & 6 & 6 & 6 & 37 & 25 & 72 & 59\\ \hline
    (4,5)  & 792 & 84 & 54 & 62 & 193 & 70 & 267 & 167 \\ \hline
    (4,6)  & 924 & 316 & 166 & 118 & 445 & NA & 518 & 184  \\ \hline
		(4,7)  & 792 & 492 & NA & 86 & 565 & NA & 603 & 96 \\ \hline
		(5,4)  & 4845 & NA & NA & NA & 6 & 6 & 24 & 24\\ \hline
    (5,5)  & 15,504 & 24 & 24 & 24 & 222 & 162 & 518 & 432\\ \hline
    (5,6)  & 38,760 & 720 & 576 & 600 & 2470 & 1288 & 4130 & 1110 \\ \hline
    (5,7)  & 77,520 & 6440 & 4052 & 4030 & 13,004 & 3154 & 17,708 & 1552 \\ \hline
    (5,8)  & 125,970 & 26,875 & 9565 & 10,336 & 40,126 & NA & 48,277 & 17,113  \\ \hline
		(5,9) & 167,960 & 65,280 & NA & 15,984 & 82,159 & NA & 91,658 & 20,272 \\ \hline
		(5,10) & 184,756 & 105,566 & NA & 9841 & 120,202 & NA & 128,003 & 10,689 \\ \hline
    \end{tabular}
\end{center}

The number of strongly connected graphs and the number of strongly input-output connected graphs with different input/output configurations is noted.

We then computed the number of models with expected dimension for 4 notable cases: the case of identical single input and single output with a strongly connected graph $G$ (as in \cite{MeshkatSullivant}), the case of single input but multiple outputs with a strongly connected graph $G$, the case of distinct single input and single output with a strongly input-output connected graph $G$, and the case of single output but multiple inputs with a strongly input-output connected graph $G$.  Due to restrictions on the number of edges, not all cases are possible, and those are labeled ``NA''.

\section{Construction of Identifiable Models} \label{section:construction}

In this section, we consider the special case of single input and single output in the same compartment with $G$ strongly connected and $Leak=V$, as in \cite{MeshkatSullivant}, i.e. $\cm=(G,\{i\},\{i\},V)$ with $G$ strongly connected.  Since $Leak$ is assumed to be $V$ and input/output are assumed to be the same vertex, we can just discuss the graph $G$ in what follows.

In \cite{MeshkatSullivant}, Theorem 5.7 gives a way of constructing a new model with expected dimension from a smaller model with expected dimension by adding an incoming and outgoing edge to a chain of cycles (See Definition \ref{defn:chain}):

\begin{thm} [Theorem 5.7 of \cite{MeshkatSullivant}] Let $G'$ be a graph that has the expected dimension with $n-1$ vertices.  Let $G$ be a new graph obtained from $G'$ by adding a new vertex and two edges $k \rightarrow n$ and $n \rightarrow l$ and such that $G$ has a chain of cycles containing both $1$ and $n$.  Then $G$ has the expected dimension.
\end{thm}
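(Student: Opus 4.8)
The plan is to closely mirror the proof of Theorem \ref{thm:add_node}, specialized to the case where the input and output coincide at vertex $1$, so that $\dist(1,1)=0$ and there is a single distinguished compartment rather than two. As there, the engine is the weight-order degeneration of Lemma \ref{lemma:images}: rather than compute the rank of the Jacobian of the coefficient map $\phi_G$ directly, I would choose a weight $\omega$ under which the initial parameterization $\phi_{G,\omega}$ has a Jacobian whose rank I can read off a block decomposition, and then show this rank already meets the a priori upper bound on $\dim(\im\phi_G)$.

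First I would fix the coefficient map. With input and output in compartment $1$, the input-output equation is $\det(\partial I-A)y_1=\det(\partial I-A)_{11}u_1$; by Proposition \ref{prop:factor} its left-hand coefficients $c_1,\dots,c_n$ factor through the cycles of $G$, and its right-hand coefficients $d_1,\dots,d_{n-1}$ factor through the cycles of the induced subgraph on $V\setminus\{1\}$. Passing from $G'$ (on $n-1$ vertices) to $G$ (adjoining vertex $n$ together with the edges $k\to n$ and $n\to l$) introduces exactly the three new parameters $a_{nn},a_{ln},a_{nk}$ and exactly the two new top-order coefficients $c_n$ and $d_{n-1}$, each of which must cover the new vertex $n$ and hence contains a cycle incident to $n$. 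I would then take
\[
\omega_{uv}=\begin{cases}0 & (u,v)=(n,n)\\ \tfrac12 & (u,v)=(n,k)\text{ or }(l,n)\\ 1 & \text{otherwise,}\end{cases}
\]
exactly as in Theorem \ref{thm:add_node}. A degree count shows this weight annihilates every monomial containing a cycle incident to $n$ in all coefficients except the two new top-order ones, so that $\phi_{G,\omega}$ agrees with $\phi_{G'}$ on the old coefficients and with $\phi_G$ on $c_n$ and $d_{n-1}$.

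Consequently the Jacobian is lower block triangular,
\[
J(\phi_{G,\omega})=\begin{pmatrix} J(\phi_{G'}) & 0 \\ * & C\end{pmatrix},
\]
where $J(\phi_{G'})$ has full rank equal to the expected dimension of $G'$ by hypothesis, and $C$ is the $2\times 3$ block of partials of $c_n$ and $d_{n-1}$ with respect to $a_{nn},a_{ln},a_{nk}$. Since the expected dimension of $G$ exceeds that of $G'$ by exactly $2$, corresponding to the two added edges, it suffices to show $C$ has rank $2$. To do this I would use the hypothesized chain of cycles $s_2,\dots,s_t$ running from the cycle $s_2$ through vertex $1$ out to the cycle $s_t$ through $n$, set all diagonal entries to $1$, kill every edge not in the chain, and normalize each cycle's edge-product to its sign. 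Counting the surviving monomials in each entry of $C$ then reduces to counting subsets of the chain with no two adjacent members, which by Lemma \ref{lemma:fibo} produces the Fibonacci array
\[
C=\begin{pmatrix} F_t & F_{t-1} & F_{t-1} \\ F_{t-1} & F_{t-2} & F_{t-2}\end{pmatrix}.
\]

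The main obstacle, exactly as in Theorem \ref{thm:add_node}, is this last combinatorial step: verifying that the weighted parameterization strips each entry of $C$ down to precisely the claimed nonadjacent-subset count and that no accidental cancellation occurs among the surviving monomials. Here the roles differ slightly from the distinct-compartment case, since the right-hand coefficients now come from cycles avoiding vertex $1$ rather than from a path together with cycles; the constraint that $d_{n-1}$ omit vertex $1$ (hence omit $s_2$) plays the role that the forced inclusion of the shortest path did before, and the same Fibonacci counts emerge. Once the Fibonacci form is in hand, the classical identity $F_tF_{t-2}-F_{t-1}^2=(-1)^{t-1}\neq 0$ shows the leading $2\times 2$ minor is nonsingular, so $C$ has rank $2$; adding the block ranks gives rank $J(\phi_{G,\omega})=\dim(\im\phi_{G'})+2$, which equals the upper bound on $\dim(\im\phi_G)$. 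Lemma \ref{lemma:images} then forces $\dim(\im\phi_G)$ to attain that bound, i.e.\ $G$ has the expected dimension.
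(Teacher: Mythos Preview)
The paper does not give its own proof of this statement: it is quoted from \cite{MeshkatSullivant} as Theorem~5.7 and used as a black box. What the paper does prove is Theorem~\ref{thm:add_node}, which is the generalization of this result to the case of distinct input and output compartments. Your proposal is exactly the specialization of that proof back to the coincident case $i=j=1$, and it goes through cleanly: the weight-order degeneration, the block-triangular Jacobian, the chain-of-cycles evaluation, and the Fibonacci identity all carry over, with the single modification you correctly identify---that the right-hand coefficients now factor through cycles of the induced subgraph on $V\setminus\{1\}$ rather than through a path times cycles, so the exclusion of the cycle $s_2$ containing vertex~$1$ plays the role that the forced inclusion of the shortest path $s_1$ plays in Theorem~\ref{thm:add_node}. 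Since Theorem~\ref{thm:add_node} is itself the paper's adaptation of the \cite{MeshkatSullivant} argument, your proof is essentially the same as the original one you would find there.
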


In \cite{BaaijensDraisma}, the authors strengthened this result to allow for adding loops of any length, not just length two:

\begin{prop} \label{prop:bd} [Proposition 4.14 of \cite{BaaijensDraisma}] Let $G=(V,E)$ on $n-1$ vertices be a graph with the expected dimension.  Construct $G'$ from $G$ by adding new vertices $n_1,...,n_s$ and edges $k \rightarrow n_1$, $n_s \rightarrow l$, and $n_i \rightarrow n_{i+1}$ for $i=1,...,s-1$ where $k,l \in V$ are vertices of $G$.  Then $G'$ has the expected dimension. 
\end{prop}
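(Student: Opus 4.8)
The plan is to prove the statement directly at the level of the Jacobian of the coefficient map, in the spirit of Theorem \ref{thm:add_node} but treating the whole added path at once. We are in the case $In=Out=\{1\}$ and $Leak=V$, so that ``expected dimension'' for $G'$ means $\dim\im\phi_{G'}=|E'|+1$ with $|E'|=|E|+s+1$. Since $G$ is strongly connected and the new path lets every $n_t$ reach $l$ and be reached from $k$, the graph $G'$ is strongly connected, so Corollary \ref{cor:numpathscycles} supplies the upper bound $\dim\im\phi_{G'}\le|E'|+1$. It therefore suffices to exhibit one parameter point at which the Jacobian $J(\phi_{G'})$ has rank $|E'|+1=|E|+s+2$.

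The first step is a structural factorization that makes the pendant path behave like a single super-edge. Each added vertex $n_t$ has a unique non-loop in-edge and a unique non-loop out-edge, so every non-self cycle meeting $\{n_1,\dots,n_s\}$ must run through the entire path $k\to n_1\to\cdots\to n_s\to l$ and close up inside $G$; in particular the path-edge parameters enter $\phi_{G'}$ only through the product $P=a_{n_1k}a_{n_2n_1}\cdots a_{ln_s}$. Writing $p_A(\lambda)=\det(\lambda I-A)$ and $q(\lambda)$ for (up to sign) the $(l,k)$-cofactor of $\lambda I-A$, a bordered-determinant expansion yields
\[
\det(\lambda I-A')=\Big(\prod_{t=1}^{s}(\lambda-a_{n_tn_t})\Big)p_A(\lambda)-P\,q(\lambda),
\]
and the analogous identity holds for the $(1,1)$-cofactor governing the right-hand side. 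Here $\deg(Pq)\le n-2$, and $q\not\equiv0$ because $G$ is strongly connected (there is an $l$-to-$k$ path).

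With this in hand I would split the columns of $J(\phi_{G'})$ into three groups and estimate the span of each, viewing each gradient as a vector of coefficients of the pair (left-hand side, $(1,1)$-cofactor). Differentiating in the \emph{path edges} is, by the chain rule and the fact that they occur only through $P$, always a scalar multiple of $\partial\phi_{G'}/\partial P$, whose coefficient vector is that of $-q$ together with its cofactor analogue; this group contributes rank exactly $1$, and it is nonzero since $q\not\equiv0$. Differentiating in the $s$ new \emph{self-loops} $a_{n_tn_t}$ produces the vectors $-\prod_{t'\neq t}(\lambda-a_{n_{t'}n_{t'}})\,p_A(\lambda)$ (with the analogous cofactor term on the right); for distinct $a_{n_tn_t}$ these are the unnormalized Lagrange polynomials times the common factor $p_A$, hence linearly independent, which is the Vandermonde/Lagrange generalization of the Fibonacci identity used in Theorem \ref{thm:add_node}, so this group contributes rank $s$. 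Finally, differentiating in the \emph{original parameters} applies the fixed operator ``multiply by $\prod_t(\lambda-a_{n_tn_t})$'' to the gradients making up $J(\phi_G)$, up to a lower-degree correction from the $-P\,q$ term; since multiplication by a fixed nonzero polynomial is injective on coefficient space, this group has the same rank $|E|+1$ as $J(\phi_G)$, which is full by hypothesis.

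The main obstacle is the last mile: showing that these three spans are in direct sum at a suitable specialization, so that the ranks add to $(|E|+1)+s+1=|E'|+1$. I would choose the original parameters so that $J(\phi_G)$ attains its full rank $|E|+1$, the new self-loops $a_{n_tn_t}$ distinct, and the path edges generic, and then separate the pieces by degree: the $P$-direction lives in degree $\le n-2$, while the self-loop and original directions are governed by their top-degree terms, of degree $n+s-2$, carrying the factors $\prod_{t'\neq t}(\lambda-a_{n_{t'}n_{t'}})$ and $\prod_t(\lambda-a_{n_tn_t})$ respectively; distinctness of the $a_{n_tn_t}$ then obstructs any nontrivial overlap. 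Establishing this independence cleanly---rather than the individual rank counts, which are forced by the factorization---is the crux of the argument, exactly as the $2\times3$ Fibonacci block was the crux in the single-node case.
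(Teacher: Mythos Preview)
The paper does not supply a proof of this proposition; it is quoted from \cite{BaaijensDraisma} and used as a black box in Section~\ref{section:construction}, so there is nothing in the paper to compare against.

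On its own merits: your bordered-determinant factorization $\det(\lambda I-A')=\Pi\,p_A-P\,q$ and its $(1,1)$-cofactor analogue $\Pi\,p_{A,11}-P\,\tilde q$ (with $\Pi=\prod_t(\lambda-a_{n_tn_t})$) are correct, as are the individual ranks $1$, $s$, $|E|+1$ for your three column groups. The divisibility observation (group~3 carries the full factor $\Pi$, the $t$-th group-2 vector only $\Pi_t$) does separate groups~2 and~3, most transparently after specializing one path edge to zero so that $P=0$. The piece you flag as the crux is, however, a genuine gap as stated: your degree heuristic does \emph{not} establish that $(-q,-\tilde q)$ is independent of groups~2 and~3 together, because combinations $\Pi\,w+h\,(p_A,p_{A,11})$ with $\deg h\le s-1$ can cancel in high degree and land entirely in degree $\le n-2$ (indeed, for any fixed $w_1$ there is a unique such $h$ making the first component drop below degree $n-1$). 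What is actually needed is one further identity. At $P=0$, evaluation at $\lambda=a_{n_tn_t}$ kills group~3 and sends group~2 to multiples of $\bigl(p_A(a_{n_tn_t}),p_{A,11}(a_{n_tn_t})\bigr)$, so if $(-q,-\tilde q)$ lay in the span one would have $D(a_{n_tn_t})=0$ for every $t$, where $D:=q\,p_{A,11}-\tilde q\,p_A$. The Desnanot--Jacobi identity for $M=\lambda I-A$ (rows $\{1,l\}$, columns $\{1,k\}$, assuming $1\notin\{k,l\}$) gives $D=\pm\det(M_{\hat1,\hat k})\det(M_{\hat l,\hat1})$; both minors are nonzero because $G$ is strongly connected (they record paths $1\to k$ and $l\to1$, respectively), while the boundary cases $1\in\{k,l\}$ yield $\tilde q=0$ and $D=q\,p_{A,11}\not\equiv0$ directly. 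Choosing one $a_{n_tn_t}$ off the zero locus of $D$ then completes your argument.
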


We will use Proposition 4.14 from \cite{BaaijensDraisma} combined with Theorem 1 of \cite{MeshkatSullivantEisenberg} to form identifiable models. In other words, we will use Proposition 4.14 to construct identifiable cycle models and then use Theorem 1 to eliminate all but one leak to form an identifiable model. We state Theorem 1 here:

\begin{thm} [Theorem 1 from \cite{MeshkatSullivantEisenberg}] \label{thm:mse}
Let $M$ be an identifiable cycle model.  If the model is changed to have exactly one leak, then the resulting model is locally identifiable.
\end{thm}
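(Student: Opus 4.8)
The plan is to treat this as the single-input/single-output special case ($In=Out=\{i\}$, so $|In\cup Out|=1$) of the leak-removal mechanism developed for Theorem \ref{thm:removeleaks}, and to reduce it to one Jacobian-rank computation whose heart is the structure of the directed graph Laplacian. First I would record what the hypothesis buys us. By Proposition \ref{prop:factor} the coefficient map factors as $c=\psi\circ\pi$, where $\pi$ is the cycle map into $\rr^{|E|+1}$ (by Corollary \ref{cor:numpathscycles} there are exactly $|E|+1$ independent cycles, counting the $|V|$ self-cycles). Saying that $M$ is an identifiable cycle model means every independent monomial cycle is locally identifiable, which forces $\dim(\im c)=\dim(\im \pi)=|E|+1$; in particular $\psi$ is generically locally injective on $\im\pi$. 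Replacing the full leak set $V$ by a single leak at a vertex $k$ amounts to restricting $c$ to the linear subspace $\Lambda=\{A: a_{jj}=-\sum_{l}a_{lj}\text{ for all }j\neq k\}$, which has dimension $|E|+1$ (the $|E|$ edge rates together with the one surviving leak parameter). As in the proof of Theorem \ref{thm:removeleaks}, since $c=\psi\circ\pi$ and $\psi$ is generically locally injective on $\im\pi$, it suffices to prove that $\pi|_\Lambda$ has full rank $|E|+1$; then the $(|E|+1)$-parameter model $\wcm$ is locally identifiable.

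Next I would evaluate the Jacobian of $\pi|_\Lambda$ at the specialization that sets every edge weight equal to $1$, exactly as in Lemma \ref{lemma:algindset}. The row coming from the surviving leak self-cycle $a_{kk}$ is the only one involving the leak parameter, so it splits off, and one is reduced to showing that a certain $|E|\times|E|$ matrix $J$ is invertible. The rows of $J$ are of two kinds: a basis of $|E|-|V|+1$ indicator vectors of directed cycles, which by Proposition \ref{prop:kernelincidence} span the circulation space of $G$ (the edge-weightings balanced at every vertex); and the $|V|-1$ rows $b_j$ (for $j\neq k$) recording the substitutions $a_{jj}=-\sum_l a_{lj}$, where $b_j$ is supported on the out-edges of $j$. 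The cycle rows are independent, and the rows $b_j$ are independent because they have pairwise disjoint supports, so invertibility of $J$ is equivalent to the two row spaces meeting only in $0$.

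The crux, and the main obstacle, is this trivial-intersection claim, and here is where strong connectedness enters decisively. A vector lying in both spaces is a circulation $f$ on $G$ whose value $f(u\to v)$ depends only on the source $u$, say $f(u\to v)=\beta_u$, with $\beta_k=0$ (the out-edges of the leak vertex carry weight $0$). Flow balance at each vertex $v$ reads $\sum_{u:\,u\to v}\beta_u=\beta_v\deg^+(v)$, which says precisely that $\beta$ is a left null vector of the out-degree Laplacian $\mathcal L=D^{+}-\mathrm{Adj}(G)$, where $D^+$ is the diagonal matrix of out-degrees and $\mathrm{Adj}(G)$ is the adjacency matrix with rows indexed by sources. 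For a strongly connected digraph the matrix-tree theorem (equivalently Perron--Frobenius) gives that this left kernel is one-dimensional and is spanned by a strictly positive vector. Since $\beta$ lies in that line and has the vanishing entry $\beta_k=0$, it must be identically zero; hence the two row spaces intersect trivially, $J$ is invertible, $\pi|_\Lambda$ has full rank $|E|+1$, and $\wcm$ is locally identifiable.

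I expect the positivity of the Laplacian null vector to be the one genuinely substantive point. It is also what makes the argument robust to the placement of the single leak: the combinatorial ``trace the flow back to an input/output vertex'' argument of Lemma \ref{lemma:algindset} relies on $In\cup Out\subseteq L$, which fails when the lone leak sits at a vertex $k\neq i$, whereas the strict positivity of the Perron null vector kills $\beta$ no matter which coordinate is forced to be zero. Everything else is the bookkeeping already assembled for Theorem \ref{thm:removeleaks}, so once the trivial-intersection claim is in hand the conclusion follows uniformly for every choice of the remaining leak.
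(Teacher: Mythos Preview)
The paper does not supply its own proof of this theorem; it is quoted verbatim from \cite{MeshkatSullivantEisenberg} and used as a black box in Section~\ref{section:construction}. So there is no in-paper argument to match against, only the closely related Lemma~\ref{lemma:algindset}/Theorem~\ref{thm:removeleaks} machinery.

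Your proof is correct and is precisely the $In=Out=\{i\}$, $|L|=1$ specialization of that machinery, with one genuine and necessary modification. The combinatorial ``trace the weighting back toward an input/output vertex'' step in Lemma~\ref{lemma:algindset} uses $In\cup Out\subseteq L$, which fails here when the lone leak sits at some $k\neq i$; you replace it by the Perron--Frobenius/matrix-tree positivity of the left null vector of the out-degree Laplacian. That replacement is exactly the right move. A vector in both row spaces is a circulation whose value on each edge depends only on the source, with the $k$-th source value forced to zero; writing $\beta_v$ for the common value on out-edges of $v$, the balance condition at every vertex is $\beta^{T}(D^{+}-\mathrm{Adj}(G))=0$, and for a strongly connected digraph the one-dimensional left kernel of $D^{+}-\mathrm{Adj}(G)$ is spanned by a strictly positive vector (stationary distribution of the simple random walk, or spanning-arborescence counts), so $\beta_k=0$ forces $\beta=0$. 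The remaining bookkeeping---factoring $c=\psi\circ\pi$, passing to the linear slice $\Lambda$, splitting off the $a_{kk}$ row as the unique row touching the leak parameter, and evaluating the Jacobian at all-ones edge weights---is exactly as in Lemma~\ref{lemma:algindset} and carries over unchanged.

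What your approach buys over the paper's Lemma~\ref{lemma:algindset} argument is uniformity in the leak location: the Laplacian-kernel positivity kills $\beta$ regardless of which coordinate is pinned to zero, whereas the tracing argument genuinely needs the leak set to contain the input/output vertex. In fact your argument would equally well streamline the trivial-intersection step inside Lemma~\ref{lemma:algindset} itself in the strongly connected case.
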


\begin{alg} [Construction of identifiable models with $In=Out=\{1\}$ and one leak $|L|=1$] \label{alg:construct}
\ 
\begin{enumerate}
\item Begin with $(G,\{1\},\{1\},\{1\})$ where $V=\{1\}$ and $E= \emptyset$.  
\item Construct $G'$ from $G$ by adding new vertices $n_1,...,n_s$ and edges $1 \rightarrow n_1$, $n_s \rightarrow 1$, and $n_i \rightarrow n_{i+1}$ for $i=1,...,s-1$ where $k,l \in V$ are vertices of $G$ and adding leaks from every new vertex.
\item Repeat Step 2 by starting a some vertex $n_i$ and ending at some vertex $n_j$ for $n_i,n_j \in \{1,n_1,...,n_s\}$ and adding leaks from every new vertex.
\item Continue adding edges, vertices, and leaks as described in Steps 2 and 3.
\item Remove all leaks except one leak.
\end{enumerate}
\end{alg}

\begin{thm} Let $\cm=(G, \{1\}, \{1\}, \{k\})$ be a model constructed from Algorithm \ref{alg:construct}.  The model $\cm$ is identifiable.
\end{thm}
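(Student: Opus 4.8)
The plan is to track the construction through the intermediate full-leak model and then reduce to two results already in hand. Write $\wcm = (G, \{1\}, \{1\}, V)$ for the model obtained at the end of Steps 1--4, that is, just before the final leak removal in Step 5, so that $\wcm$ has a leak in every compartment. First I would prove, by induction on the number of applications of Steps 2--4, that $\wcm$ has expected dimension, meaning the dimension of the image of its coefficient map equals $|E| + 1$. The base case is the single-vertex model of Step 1, whose compartmental matrix is the $1 \times 1$ matrix $(a_{11})$; its only coefficient is $a_{11}$ up to sign, so the image of the coefficient map has dimension $1 = |E| + 1$. For the inductive step, each application of Steps 2--4 attaches new vertices $n_1, \ldots, n_s$ together with edges $k \to n_1$, $n_s \to l$, and $n_i \to n_{i+1}$, where $k$ and $l$ are already-present vertices. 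This is precisely the operation of Proposition \ref{prop:bd}, which preserves expected dimension, so $\wcm$ has expected dimension.

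Next I would verify that $G$ is strongly connected, so that $\wcm$ qualifies as an identifiable cycle model. This also follows by induction: the starting graph on the single vertex $1$ is trivially strongly connected, and at each step the added vertices lie on the directed walk $k \to n_1 \to \cdots \to n_s \to l$, which closes into a directed cycle using a path from $l$ back to $k$ in the previous (strongly connected) graph. Thus every new vertex lies on a directed cycle together with old vertices, and strong connectivity is maintained throughout the construction. Combining strong connectivity with the expected dimension $|E| + 1$ and the fact that $Leak = V$, the sufficient condition from \cite{MeshkatSullivant} shows that $\wcm = (G, \{1\}, \{1\}, V)$ is an identifiable cycle model.

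Finally, Step 5 removes all leaks except one, producing $\cm = (G, \{1\}, \{1\}, \{k\})$ from the identifiable cycle model $\wcm$. By Theorem \ref{thm:mse}, changing an identifiable cycle model to have exactly one leak yields a locally identifiable model, so $\cm$ is identifiable, as claimed.

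I expect the main obstacle to be the bookkeeping in the inductive step: one must confirm that the edges and vertices added in Steps 2--4 always match the hypotheses of Proposition \ref{prop:bd} exactly, in particular that the attachment points $k, l$ are existing vertices, as the algorithm stipulates via $n_i, n_j \in \{1, n_1, \ldots, n_s\}$, and that the interleaving of Steps 2 and 3 never escapes the scope of that proposition. Once this correspondence is pinned down, the argument is a direct chaining of Proposition \ref{prop:bd}, the sufficient condition for identifiable cycle models, and Theorem \ref{thm:mse}.
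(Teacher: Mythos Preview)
Your proposal is correct and follows essentially the same approach as the paper: apply Proposition~\ref{prop:bd} iteratively to conclude the full-leak model $\wcm$ is an identifiable cycle model, then invoke Theorem~\ref{thm:mse} to remove all but one leak. The paper's proof is a two-line citation of these results, whereas you have spelled out the induction, the base case, and the strong-connectivity verification explicitly; these details are genuinely needed (in particular, the definition of identifiable cycle model requires $G$ strongly connected, which the paper leaves implicit), so your version is a more complete rendering of the same argument.
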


\begin{proof} By Proposition \ref{prop:bd} the model $\cm$ is an identifiable cycle model and by Theorem \ref{thm:mse} the model with only one leak is identifiable.
\end{proof}

\begin{figure}[H]
\begin{center}
	\begin{tikzpicture}[scale=1.2]
 	\draw (0,0) circle (0.3);
	\draw (1.5,-1) circle (0.3);
	\draw (-1.5,-1) circle (0.3);
	\draw (-1.5,-3) circle (0.3);
	\draw (1.5,-3) circle (0.3);	 	
    	\node[] at (0, 0) {1};
    	\node[] at (1.5, -1) {2};
    	\node[] at (-1.5, -1) {$3$};
    	\node[] at (-1.5,-3) {5};
    	\node[] at (1.5,-3) {4};
	 \draw[->] (0.28, -.2) -- (1.2, -.8);
	 \draw[<-] (-.28,-.2) -- (-1.2,-.8);
	 \draw[<-] (-1.15,-1) -- (1.15,-1);
	 \draw[<-] (-1.15,-3) -- (1.15,-3);
	 \draw[<-] (-1.5,-1.35) -- (-1.5,-2.65);
	 \draw[->] (1.5,-1.35) -- (1.5,-2.65);
   	 \node[] at (0, -1.15) {$a_{32}$};
   	 \node[] at (1.25, -2) {$a_{42}$};
   	 \node[] at (-1.25, -2) {$a_{35}$};
   	 \node[] at (0, -2.85) {$a_{54}$};
   	 \node[] at (1, -.3) {$a_{21}$};
   	 \node[] at (-1, -0.3) {$a_{13}$};

     \draw[-] (-.89, .6) -- (-.28, .2);	
	 \draw (-.93,.64) circle (0.07);	
	 \draw[->] (.89, .6) -- (.28, .2);	
   	 \node[] at (1.04,.74) {in};
	 \draw[<-] (0, .95) -- (0, .35);	
   	 \node[] at (.28, .65) {$a_{01}$};
	 \draw[<-] (-2.45, -1) -- (-1.85, -1);	
   	 \node[] at (-2.1, -1.15) {$a_{03}$};
	 \draw[<-] (2.45, -1) -- (1.85, -1);	
   	 \node[] at (2.1, -1.15) {$a_{02}$};
	 \draw[<-] (-2.45, -3) -- (-1.85, -3);	
   	 \node[] at (-2.1, -3.15) {$a_{05}$};
	 \draw[<-] (2.45, -3) -- (1.85, -3);	
   	 \node[] at (2.1, -3.15) {$a_{04}$};
\draw (-3,-4.25) rectangle (3, 1.5);
    	\node[] at (0, -3.85) {$\cm$};
%

\end{tikzpicture}
\end{center}
\caption{The graph corresponding to $\cm$ from Example \ref{ex:construct}.}
\end{figure}
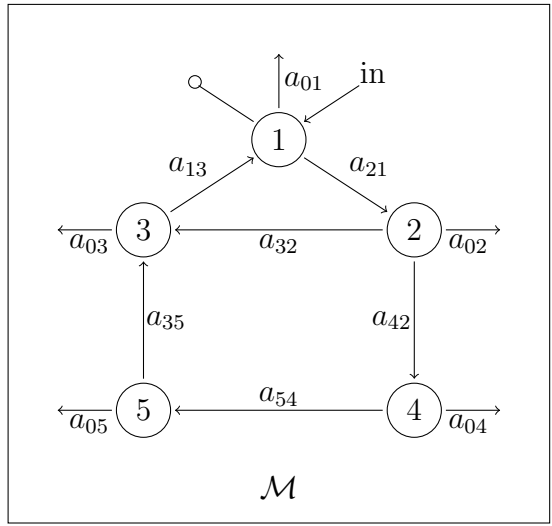

\begin{ex} \label{ex:construct} The model $\cm=(G,\{1\},\{1\},V)$ where $G$ is given by the edges $\{ 1 \rightarrow 2, 2 \rightarrow 3, 3 \rightarrow 1, 2 \rightarrow 4, 4 \rightarrow 5, 5 \rightarrow 3 \}$ is an identifiable cycle model by Proposition \ref{prop:bd}.  Thus we can remove all leaks except one, e.g. $\wcm=(G,\{1\},\{1\},\{5\})$ and the resulting model is identifiable.  We note that this model is \textit{not} inductively strongly connected, thus Proposition \ref{prop:bd} does expand upon the results in \cite{MeshkatSullivantEisenberg} to construct identifiable models.
\end{ex}

Note that the authors in \cite{BaaijensDraisma} only considered \textit{identifiable cycle models}.  We suspect there may be a similar result to Proposition \ref{prop:bd} for the case of identifiable path/cycle models.

\section{Conclusion and Future work} \label{section:conclusion}

In this work, we have defined identifiable path/cycle models and found sufficient conditions for obtaining them.  We have also formed necessary and sufficient conditions for identifiable models with certain graph properties.  Most importantly, we have demonstrated this notion of maximal dimension of the image of the coefficient map in terms of the number of edges and the number of inputs and outputs and have shown that the only identifiable models with certain graph properties are models where the dimension of the image of the coefficient map is preserved to be maximal while adding or subtracting leaks up to a point.  This shows that identifiable models of this form have a special dimension-preserving property even when they are no longer identifiable from added leaks.  We hope that this motivates the investigation of other classes of models that have this same dimension-preserving property.

As mentioned in the introduction, one of the main uses of identifiable functions of parameters is to find identifiable reparametrizations over those functions of parameters.  In \cite{MeshkatSullivant}, necessary and sufficient conditions for an identifiable scaling reparametrization were found and an algorithm to find such a scaling reparametrization was discussed for the special case of single input/single output in the same compartment, $G$ strongly connected, and leaks from every compartment.  We end this work with the following conjecture to generalize these necessary and sufficient conditions for the case of multiple inputs or multiple outputs:

\begin{conj} Let $\cm=(G,In,Out,V)$ represent a linear compartmental model with $G$ strongly input-output connected and $|Out|=1$ or $G$ strongly connected and $|In|=1$.  An identifiable scaling reparametrization exists if and only if $\cm=(G,In,Out,V)$ is an identifiable path/cycle model.
\end{conj}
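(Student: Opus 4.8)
The plan is to characterize the existence of an identifiable scaling reparametrization in terms of a diagonal torus action on the parameter space, and then to show that the generic fibers of the coefficient map $c$ coincide with the orbits of this torus exactly when the model is an identifiable path/cycle model. The backbone of the argument is a dimension count that feeds into Theorem \ref{thm:idpathcycle}, exactly paralleling the cycle-only result of \cite{MeshkatSullivant}.

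First I would set up the scaling group. Rescaling the state variables $x_i \mapsto \mu_i x_i$ conjugates the compartmental matrix by $\mathrm{diag}(\mu)$, sending each edge parameter $a_{ij} \mapsto (\mu_i/\mu_j)a_{ij}$ while fixing every diagonal entry $a_{ii}$. Since the transfer function from input $i$ to output $j$ acquires a factor $\mu_j/\mu_i$, preserving the input-output data forces $\mu_i = \mu_j$ for every connected input-output pair; under the hypothesis that $G$ is strongly input-output connected with $|Out|=1$ or strongly connected with $|In|=1$, this means the vertices of $In \cup Out$ must share a common scale, which we normalize to $1$. Thus the relevant group is $H = \{\mu \in (\cc^*)^{|V|} : \mu_i = 1 \text{ for } i \in In \cup Out\}$, with the uniform scaling acting trivially; because $G$ is connected the generic stabilizer is trivial, so the generic $H$-orbit has dimension exactly $|V| - |In \cup Out|$.

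Next I would identify the invariants. A monomial $\prod a_{i_k j_k}$ is $H$-invariant precisely when, at every vertex outside $In \cup Out$, the incoming and outgoing multiplicities balance, i.e. when its edges decompose into directed cycles and directed paths from input to output; together with the self-cycles $a_{ii}$ these are exactly the monomial cycles and monomial paths of the model, and by Proposition \ref{prop:factor} the map $c$ factors through them. Invariant theory for torus actions then gives that the transcendence degree of the $H$-invariant field is $(|E|+|V|) - (|V|-|In\cup Out|) = |E| + |In \cup Out|$ and that generic orbits are separated by these invariants. With this in hand both directions follow by dimension counting. If $\cm$ is an identifiable path/cycle model, then the dimension of the image of $c$ is $|E|+|In\cup Out|$ by Theorem \ref{thm:idpathcycle}, so the generic fiber of $c$ has dimension $|V|-|In\cup Out|$, equal to the generic orbit dimension; since $c$ is $H$-invariant its fibers contain orbits, and matching dimensions together with the invariants generating the whole invariant field forces the generic fiber to be a single $H$-orbit, so a scaling reparametrization exists, realized explicitly by the path/cycle monomials, which are identifiable by hypothesis. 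Conversely, if an identifiable scaling reparametrization exists, its generic fibers are $H$-orbits of dimension $|V|-|In\cup Out|$, whence the image of $c$ has maximal dimension $|E|+|In\cup Out|$ and Theorem \ref{thm:idpathcycle} yields that $\cm$ is an identifiable path/cycle model.

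The hard part will be pinning down the precise definition of ``identifiable scaling reparametrization'' from \cite{MeshkatSullivant} and proving it equivalent to the clean orbit-equals-fiber statement used above in this generalized setting: in particular, verifying that the input-output-preservation constraints produce exactly the group $H$ when inputs and outputs are distinct and possibly multiple, and that the path monomials (absent in the pure-cycle case of \cite{MeshkatSullivant}) are genuine scaling invariants realizable by rescaling state variables. The most delicate step is showing that the monomial cycle-and-path invariants generate the full invariant ring rather than a finite-index subfield, so that ``separated by $c$'' is truly equivalent to ``lying in a single $H$-orbit''; this lattice-theoretic point, which in \cite{MeshkatSullivant} used the strongly connected structure, must here be recovered from the strongly input-output connected structure via Corollary \ref{cor:numpathscycles}.
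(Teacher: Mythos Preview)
The paper does not prove this statement: it is explicitly presented as a \emph{conjecture} in the concluding section, introduced with ``We end this work with the following conjecture'' and followed by ``We leave these problems of identifiable reparametrizations \ldots\ for future work.'' There is therefore no proof in the paper to compare your proposal against.

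That said, your outline is a natural and reasonable extension of the argument in \cite{MeshkatSullivant} for the single input/output case, and it is presumably the kind of argument the authors have in mind. A few remarks. First, a small citation issue: you invoke Theorem~\ref{thm:idpathcycle} for the implication ``identifiable path/cycle model $\Rightarrow$ $\dim\im c = |E|+|In\cup Out|$,'' but that theorem states the converse; the direction you need follows more directly from Definition~\ref{defn:pathcycle} together with Lemma~\ref{lemma:dimc}. Second, the gaps you flag yourself are exactly the ones that would need real work to turn this into a proof: in particular, making precise what ``identifiable scaling reparametrization'' means when inputs and outputs are in distinct compartments (so that the path monomials, not just cycle monomials, become the scaling invariants), and verifying that the lattice of exponent vectors of the path/cycle monomials is saturated so that they generate the full invariant field rather than a finite-index subfield. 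The latter is where the strongly input-output connected hypothesis and Corollary~\ref{cor:numpathscycles} would have to do the work that strong connectivity did in \cite{MeshkatSullivant}, and it is not obvious that the argument carries over without modification. Your honest accounting of these points is appropriate; just be aware that you are attempting to settle an open conjecture, not to reproduce an existing proof.
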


While the models described in Corollary \ref{cor:necandsuffoutputconnectable} also have the dimension of the image of the coefficient map as $|E|+|In \cup Out|$, we note that these models will not have an identifiable scaling reparametrization since these models do not necessarily have every parameter appearing in a cycle or path from input to output.  However, it may be possible to reparametrize those models another way, e.g. scaling and adding.  We leave these problems of identifiable reparametrizations of models with maximal dimension of the image of their coefficient map for future work.


\section*{Acknowledgments}

This project began at an AIM workshop on ``Identifiability problems in systems biology,'' and the authors thank AIM for providing financial support and an excellent working environment. The authors also thank Seth Sullivant, Anne Shiu, Gleb Pogudin, and two referees for their helpful comments. Cashous Bortner was partially supported by the US National Science Foundation (DMS 1615660).  Nicolette Meshkat was partially supported by the Clare Boothe Luce fellowship from the Henry Luce Foundation and the US National Science Foundation (DMS 1853525).  

\bibliographystyle{plain}
\bibliography{AIM2}

\end{document}